\documentclass{amsart}
\usepackage[foot]{amsaddr}
\usepackage{enumerate}
\usepackage{caption,subcaption}
\usepackage{ifthen}

\usepackage{amsmath,amsthm,amssymb,mathtools,amsfonts}
\usepackage{pgf}
\usepackage{graphicx}
\usepackage{tikz}
\usetikzlibrary{backgrounds, patterns, shapes, calc, decorations.pathreplacing, decorations.pathmorphing}
\tikzset{>=latex} 
\usepackage{pgfplots} 

\tikzset{%
    vtx/.style={draw,circle,very thin,white,fill=black,inner sep=0pt,text width=8pt},
    smvtx/.style={draw,circle,ultra thin,white,fill=black,inner sep=0pt,text width=5pt},
    edg/.style={thick},
    plt/.style={thick},
}
\tikzset{/pgf/foreach/parse=true}
\pgfdeclarelayer{backback}
\pgfdeclarelayer{back}
\pgfdeclarelayer{fore}
\pgfdeclarelayer{forefore}
\pgfsetlayers{backback,back,main,fore,forefore}

\newcommand{\tnct}{{\textstyle\binom{n}{2}}}
\newcommand{\defeq}{\stackrel{\rm def}{=}}
\newcommand{\ds}{\displaystyle}
\newcommand{\ts}{\textstyle}
\newtheorem{theorem}{Theorem}
\newtheorem{lemma}[theorem]{Lemma}
\newtheorem{corollary}[theorem]{Corollary}
\newtheorem{proposition}[theorem]{Proposition}
\newtheorem{definition}[theorem]{Definition}

\newtheorem{conjecture}[theorem]{Conjecture}
\newtheorem{question}[theorem]{Question}

\keywords{word-representable graphs; comparability graphs; edit distance}
\subjclass[2020]{05C35,05C62}

\title{The edit distance of word-representable and comparability graphs}

\author{Sergey Kitaev}
\address{Department of Mathematics and Statistics,
University of Strathclyde, 26 Richmond Street,
Glasgow G1 1XH, UK}
\email{sergey.kitaev@strath.ac.uk}

\author{Ryan R. Martin}
\address{Department of Mathematics,
Iowa State University,
Ames, IA 50011, USA}
\email{rymartin@iastate.edu}

\newcommand{\hword}{{\mathcal H}_{\rm word}}
\newcommand{\hkword}[1]{{\mathcal H}_{#1{\rm -word}}}
\newcommand{\hcomp}{{\mathcal H}_{\rm comp}}
\newcommand{\ed}{{\rm ed}}
\newcommand{\forb}{{\rm Forb}}

\newcommand{\FF}{{\mathcal F}}
\newcommand{\HH}{{\mathcal H}}
\newcommand{\KK}{{\mathcal K}}

\newcommand{\RR}{{\mathcal R}}
\newcommand{\dist}{{\rm dist}}
\newcommand{\x}{{\bf x}}
\newcommand{\y}{{\bf y}}

\newcommand{\VW}{{\rm VW}}
\newcommand{\VB}{{\rm VB}}
\newcommand{\EW}{{\rm EW}}
\newcommand{\EB}{{\rm EB}}
\newcommand{\EG}{{\rm EG}}
\newcommand{\dg}{{\rm d}_{\rm G}}

\begin{document}

\maketitle

\begin{abstract}
    In this paper, we establish that the maximum edit distance of an $n$-vertex graph from the hereditary property of word-representable graphs is $n^2/8-o(n^2)$. In addition, we establish that the maximum edit distance of an $n$-vertex graph from the hereditary property of poset comparability graphs is $5n^2/32-o(n^2)$. 
    
    In fact, we determine the edit distance function over all edge densities $p\in [0,1]$ for the property of word-representable graphs, for the property of $k$-word-representable graphs for each $k\geq 2$, and for the property comparability graphs. The latter has a peculiar structure that requires an infinite sequence of colored regularity graphs. 
\end{abstract}

\section{Introduction}

\subsection{Word representation of graphs}
Word representations of graphs enable one to encode a graph as a string and were introduced by Pyatkin and the first author~\cite{KP}. A \emph{word-representation} of a graph $G=(V,E)$ is a word over the alphabet $V$ such that vertices $v_1$ and $v_2$ are adjacent in $G$ if and only if they alternate in the word. A graph $G$ is said to be \emph{$k$-word-representable} if there exists a word representing $G$ that contains exactly $k$ copies of each letter, that is, a $k$-uniform word. It was shown in~\cite{KP} that a graph is word-representable if and only if it is $k$-word-representable for some positive integer $k$. This allows one to define the \emph{representation number} of a word-representable graph $G$, denoted by $\RR(G)$, as the smallest positive integer $k$ such that $G$ is $k$-word-representable. By definition, $\RR(G)=\infty$ for any non-word-representable graph.

\subsection{Comparability graphs}
The \emph{comparability digraph} of a finite poset $P$, denoted $D_P$ is a directed graph where the vertex set is the elements of $P$ and $x\rightarrow_{D_P}y$ is a directed edge if and only if $x<_P y$.
The \emph{comparability graph} of a finite poset $P$ is simply the underlying graph of its comparability digraph.

There is a strong connection between comparability graphs and word-representable graphs, as indicated in~\cite{KP}. Namely, if a graph is word-representable, then the neighbourhood of any vertex in it must be a comparability graph. The converse of this statement is not true \cite{kitaev2015}. It is known~\cite{KS08} that every comparability graph can be represented by a concatenation of permutations.

\subsection{Hereditary properties of graphs}

A property of graphs is \emph{hereditary} if it is closed under isomorphism and deleting vertices. 
That is, if $H$ is in a hereditary property $\HH$, then every induced subgraph of $H$ is also in $\HH$. 

For a graph $F$, let $\forb(F)$ denote the hereditary property of graphs with no induced copy of $F$. 
For a set of graphs $\FF$, let $\forb(\FF)$ denote the hereditary property of graphs with no induced copy of $F$ for any $F\in\FF$. 
That is, $\forb(\FF)=\bigcap_{F\in\FF}\forb(F)$.
In fact, for every hereditary property $\HH$, there exists a (possibly infinite) set $\FF(\HH)$ such that $\HH=\bigcap_{F\in\FF(\HH)}\forb(F)$.

A hereditary property is \emph{trivial} if it is finite, otherwise it is \emph{nontrivial}. 
By Ramsey theory, a hereditary property $\HH$ is trivial if and only if there are positive integers $a$ and $b$ such that $\HH\subseteq\forb\bigl(K_a\bigr)\cap\forb\bigl(\overline{K_b}\bigr)$.

\subsection{The edit distance in graphs}
The \emph{edit distance between graphs $G$ and $H$} on the same labeled vertex set of size $n$ is 
$$\dist(G,H)\defeq\bigl|E(G)\triangle E(H)\bigr|/\tnct ,$$
where, in this case the $\triangle$ notation denotes the symmetric difference.
The \emph{edit distance between a graph $G$ and hereditary property $\HH$} is 
$$\dist(G,\HH)\defeq\min\bigl\{\dist(G,H) : H\in\HH, V(H)=V(G)\bigr\}.$$
The \emph{edit distance of a hereditary property $\HH$} is 
$$\dist(n,\HH)\defeq\max\bigl\{\dist(G,\HH) : |V(G)|=n\bigr\}.$$ 

The study of the edit distance in graphs was developed simultaneously by Axenovich, K\'ezdy and the second author~\cite{AKM}, by Alon and Stav~\cite{AS}, and (in a different context) by Marchant and Thomason~\cite{MT}.

The edit distance function was introduced by Balogh and the second author~\cite{BM}.
\begin{definition}[Balogh-M.~\cite{BM}]
    The \emph{edit distance function} of a hereditary property $\HH$ is a function of $p\in [0,1]$, is denoted $\ed_{\HH}(p)$, and is defined to be
    $$
        \ed_{\HH}(p) = \lim_{n\rightarrow\infty} \max\bigl\{\dist\bigl(G,\HH\bigr) : \bigl|V(G)]\bigr|=n, \bigl|E(G)\bigr|=\bigl\lfloor p\tnct\bigr\rfloor\bigr\} .
    $$
\end{definition}
The limit was proven to exist in~\cite{BM}.

\begin{theorem}[Balogh--M.~\cite{BM}]
    Let $\HH$ be a nontrivial hereditary property.
    If $G(n,p)$ denotes the binomial Erd\H{o}s--R\'enyi random graph~\cite{ER}\footnote{In fact, this binomial model of the random graph is due to Gilbert~\cite{Gil}.}, then
    $$
        \ed_{\HH}(p) = \lim_{n\rightarrow\infty} \mathbb{E}\bigl[\dist\bigl(G(n,p),\HH\bigr)\bigr].
    $$
    Furthermore, for every $p\in [0,1]$, the quantity $\dist\bigl(G(n,p),\HH\bigr)$ is concentrated around its mean. 
    That is, for every $\epsilon>0$,
    $$
    \mathbb{P}\Bigl(\bigl|\dist\bigl(G(n,p),\HH\bigr)
    -\mathbb{E}\bigl[\dist\bigl(G(n,p),\HH\bigr)\bigr]\bigr|
    >\epsilon\Bigr)\rightarrow 0
    $$
    as $n\rightarrow\infty$.
\end{theorem}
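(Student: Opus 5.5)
We split the argument into two parts: concentration of $\dist\bigl(G(n,p),\HH\bigr)$, and identification of the limit of its mean with $\ed_{\HH}(p)$.

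\textbf{Concentration.} We view $\dist\bigl(G(n,p),\HH\bigr)$ as a function of the $\tnct$ independent edge indicators. Changing one indicator changes $\dist(G,H)$ by exactly $1/\tnct$ for every fixed $H$, so the minimum over $H\in\HH$ also changes by at most $1/\tnct$. The bounded-differences inequality of McDiarmid then gives
$$\mathbb{P}\Bigl(\bigl|\dist\bigl(G(n,p),\HH\bigr)-\mathbb{E}\bigl[\dist\bigl(G(n,p),\HH\bigr)\bigr]\bigr|>\epsilon\Bigr)\le 2\exp\bigl(-2\epsilon^2\tnct\bigr).$$
This tends to $0$, which proves the concentration claim.

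\textbf{Lower bound: $\ed_{\HH}(p)\ge\lim\mathbb{E}$.} By Chernoff, $G(n,p)$ has $p\tnct\pm O(n\log n)$ edges with high probability. Adding or removing $O(n\log n)=o(n^2)$ edges produces a graph with exactly $\lfloor p\tnct\rfloor$ edges, and this changes the distance to $\HH$ by only $o(1)$. Combining this with concentration, there exist graphs with exactly $\lfloor p\tnct\rfloor$ edges whose distance to $\HH$ is at least $\mathbb{E}\bigl[\dist\bigl(G(n,p),\HH\bigr)\bigr]-o(1)$.

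The limit of the expectation exists by a subadditivity argument. A random induced subgraph on $m<n$ vertices of $G(n,p)$ is distributed as $G(m,p)$, and the restriction of an optimal $H\in\HH$ to those $m$ vertices lies in $\HH$ because $\HH$ is hereditary. Averaging over the $m$-subsets gives
$$\mathbb{E}\bigl[\dist\bigl(G(m,p),\HH\bigr)\bigr]\le\mathbb{E}\bigl[\dist\bigl(G(n,p),\HH\bigr)\bigr],$$
so the sequence is nondecreasing and bounded by $1$, and hence converges.

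\textbf{Upper bound: for every $G$ with $\lfloor p\tnct\rfloor$ edges, $\dist(G,\HH)\le\lim\mathbb{E}+o(1)$.} This is the main obstacle, since we must handle an arbitrary graph $G$. My plan has four steps.
\begin{enumerate}
\item Apply Szemer\'edi's regularity lemma to $G$, obtaining an equipartition into $k$ parts with pair densities $d_{ij}$ and internal densities $d_{ii}$.
\item Choose a large fixed $m$ and take $H_0\in\HH$ on $m$ vertices that nearly minimizes the expected distance to $G(m,p)$. Choose a random map $\phi$ from $V(G)$ to $V(H_0)$ in which each class of the partition is spread uniformly over the $m$ vertices. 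Consider the blow-up in which $xy$ is an edge if and only if $\phi(x)\phi(y)\in E(H_0)$; vertices mapped to the same point are joined by a clique or an independent set according to the majority density in $G$.
\item Such blow-ups lie in $\HH$ for clique- or coclique-type substitutions. This step is delicate, because not every hereditary property is closed under blow-up. The standard remedy, following Balogh and the second author, is to work with colored regularity graphs. One shows that $\ed_{\HH}(p)=\min_{K}f_K(p)$ over colored regularity graphs $K$ whose blow-ups lie in $\HH$, where $f_K(p)$ is a quadratic minimization. The upper bound then comes from blowing up the optimal $K$, whose expected edit cost against any graph of density $p$ is linear in the edge count and so depends only on $p$.
\item Show that $G(n,p)$ also needs at least $\min_K f_K(p)-o(1)$ edits. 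For this, apply regularity to the optimal $H$ together with Ramsey theory. Nontriviality of $\HH$ guarantees that the colored regularity graphs $K$ are available, since it gives homogeneous sets inside each cluster. Then a counting argument shows that $G(n,p)$ must disagree with $H$ on about $f_K(p)\tnct$ pairs.
\end{enumerate}
Matching these two bounds gives the equality.
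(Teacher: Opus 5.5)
Your concentration bound (bounded differences with Lipschitz constant $1/\tnct$) is correct. So is the transfer from $G(n,p)$ to graphs with exactly $\lfloor p\tnct\rfloor$ edges, and so is the monotonicity of $\mathbb{E}[\dist(G(m,p),\HH)]$ via random $m$-subsets. The architecture you eventually reach in Steps 3--4 is the Alon--Stav/Balogh--Martin one. But you have put the difficulty in the wrong half.

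Steps 1--2 should be discarded. Blow-ups of a fixed $H_0\in\HH$ need not lie in $\HH$, and no regularity on $G$ is needed for the upper bound. For \emph{any} $K\in\KK(\HH)$, a uniformly random map $V(G)\to V(K)$ edits every graph of density $p$ into a graph with a colored homomorphism to $K$, hence into $\HH$. The expected cost is $f_K(p)\tnct+O(1)$. Taking the infimum over $K$ gives the upper bound. It is an infimum, so there need be no \emph{optimal} $K$. Also, writing ``one shows that $\ed_{\HH}(p)=\min_K f_K(p)$'' at that point assumes what is being proved.

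The genuine gap is Step 4, which carries all the content and where your sketch lacks the mechanism that makes it work. From the closest $H\in\HH$ you must build a CRG $K$ and certify $K\in\KK(\HH)$. That means showing every $F$ with $F\mapsto K$ appears as an induced subgraph of $H$; only then does counting against $G(n,p)$ give $\inf_K f_K(p)-o(1)$.

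Your justification, that nontriviality of $\HH$ gives homogeneous sets inside each cluster, does not do this.
\begin{itemize}
\item Nontriviality only ensures that $\dist(\cdot,\HH)$ is defined and that $\KK(\HH)\neq\emptyset$.
\item Ramsey's theorem gives homogeneous sets of merely logarithmic size. Regularity of $(V_i,V_j)$ says nothing about sets that small, so you cannot embed an $F$ that sends several vertices to one CRG vertex.
\item Ordinary Szemer\'edi regularity also leaves up to $\epsilon k^2$ irregular pairs of intermediate density. These can be colored neither gray (forbidden graphs could then map through a pair whose required pattern $H$ does not realize) nor white or black.
\end{itemize}

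The missing ingredient is the strong regularity lemma of Alon, Fischer, Krivelevich, and Szegedy. It gives linear-size sets $U_i\subseteq V_i$ with \emph{all} pairs $(U_i,U_j)$ being $f(k)$-regular, and with $d(U_i,U_j)\approx d(V_i,V_j)$ for all but $\epsilon k^2$ pairs. The rest of Step 4 then goes as follows:
\begin{enumerate}
\item A second regularity partition inside each $U_i$, together with Ramsey's theorem on its three-colored reduced graph (sparse, dense, intermediate), fixes the vertex colors.
\item Edge colors come from thresholding $d(U_i,U_j)$ at $\eta$ and $1-\eta$.
\item An induced embedding lemma then gives $K\in\KK(\HH)$.
\item $G(n,p)$ has density $p\pm o(1)$ between all pairs of linear-size sets simultaneously. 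Counting over the pairs $(V_i,V_j)$ therefore yields at least $\bigl(f_K(p)-O(\eta+\epsilon+1/k)\bigr)\tnct$ edits.
\end{enumerate}
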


Let $\hword$ denote the hereditary property of word-representable graphs.
For a positive integer $k$, let $\hkword{k}$ denote the hereditary property of $k$-word-representable graphs.
Note that, according to the way the term is defined, an induced subgraph of a $k$-word-representable graph also has a $k$-representation, thus $\hkword{k}$ is, indeed, a hereditary property.
Let $\hcomp$ denote the hereditary property of comparability graphs.

Hefty, Horn, Muir, and Owens~\cite{HHMO} established that $\dist\bigl(n,\hword\bigr)=\Omega\bigl(n^2\bigr)$. 
In this paper, we asymptotically resolve the value of $\dist\bigl(n,\hword\bigr)$ by proving the following:
\begin{theorem}
    For all $p\in [0,1]$, $\ed_{\hword}(p)=\min\bigl\{p/3,1-p\bigr\}$. 
    Consequently, $\dist\bigl(n,\hword\bigr)=\tfrac{1}{4}\tnct-o\bigl(n^2\bigr)$. 
    Moreover, if $\dist\bigl(G,\hword)\geq\bigl(\tfrac{1}{4}-\epsilon\bigr)\tnct$, then $\bigl|E(G)\bigr|/\tnct\in \bigl[\tfrac{3}{4}-3\epsilon,\tfrac{3}{4}+\epsilon\bigr]$.
    \label{thm:word}
\end{theorem}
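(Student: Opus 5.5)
The plan is to work in the colored regularity graph (CRG) framework of Balogh and the second author~\cite{BM}. There, $\ed_{\HH}(p)=\inf_{K}g_K(p)$, where $K$ ranges over CRGs all of whose realizations lie in $\HH$. The function $g_K(p)$ is the minimum over probability weightings $\x$ of $\x^T M_K(p)\x$. The matrix $M_K(p)$ has the following entries:
\begin{itemize}
\item a white vertex or white edge contributes $p$;
\item a black vertex or black edge contributes $1-p$;
\item a gray edge contributes $0$.
\end{itemize}
Moreover, it suffices to consider $p$-core CRGs.

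\textbf{Upper bound.} Two explicit edits give $\ed_{\hword}(p)\le\min\{p/3,1-p\}$.
\begin{itemize}
\item $K_n$ is word-representable, via the word $12\cdots n$. Turning $G$ into $K_n$ costs $1-p$; this is the CRG with one black vertex.
\item Every $3$-colorable graph is word-representable (a known result, presumably to be quoted later in the paper). Take an equipartition of $V(G)$ into three parts and delete the edges inside parts. Averaging over equipartitions, some choice deletes at most $(p/3+o(1))\tnct$ edges. This is the CRG consisting of a gray triangle on three white vertices.
\end{itemize}
Both edits are deterministic statements about an arbitrary $G$ with $|E(G)|=p\tnct$. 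So $\dist(G,\hword)\le\min\{p/3,1-p\}+o(1)$ for every $G$, not only in the limit.

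\textbf{Lower bound.} Here I must show $g_K(p)\ge\min\{p/3,1-p\}$ for every $p$-core CRG $K$ whose realizations are all word-representable. I would first derive forbidden configurations in such a $K$.
\begin{itemize}
\item Four white vertices pairwise joined by gray edges would realize every $4$-colorable graph. This includes non-word-representable graphs such as the wheel $W_5$. So the gray graph on the white vertices is $K_4$-free.
\item A black vertex gray-adjacent to a gray triangle of white vertices is forbidden. A vertex in the clique blown up from the black vertex could then have as its neighbourhood any $3$-colorable graph, for instance $C_5$. That is not a comparability graph, contradicting the fact that neighbourhoods in word-representable graphs are comparability graphs~\cite{KP}.
\item I expect to need a few further such configurations, mixing black vertices, black edges and gray edges. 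Each would be excluded by exhibiting a specific non-word-representable graph, or a non-comparability neighbourhood.
\end{itemize}
Then I would split on $p$.
\begin{itemize}
\item For $p\le 1/2$, $p$-core CRGs have only white vertices and no white edges. The $K_4$-freeness of the gray graph then gives $g_K(p)\ge p/3$, by the standard bound $g_K(p)\ge p/(\text{clique number of the gray graph})$, which follows from Motzkin--Straus.
\item For $p\ge 1/2$, the cores can have black vertices. I would solve the quadratic program on the weights directly, using the forbidden structures, to show that $\x^TM_K(p)\x\ge\min\{p/3,1-p\}$.
\end{itemize}

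\textbf{Main obstacle.} I expect the range $p\in[1/2,3/4]$ to be the hardest step, since there CRGs with both black and white vertices compete. What I would try first is to show that in a $p$-core any black vertex can be adjacent by gray edges only to a gray-bipartite set of white vertices. Then a weighted convexity argument should show that moving weight off black vertices never decreases the objective below $p/3$.

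\textbf{The ``moreover'' part.} This follows from the upper bound. If $\dist(G,\hword)\ge\tfrac14-\epsilon$, then $p/3\ge\tfrac14-\epsilon-o(1)$ and $1-p\ge\tfrac14-\epsilon-o(1)$. Hence $p\in[\tfrac34-3\epsilon,\tfrac34+\epsilon]$ up to the $o(1)$ rounding, which I would absorb by a sharper finite-$n$ count in the averaging argument. The value $\dist(n,\hword)=\tfrac14\tnct-o(n^2)$ follows from the formula for $\ed_{\hword}$, whose maximum $1/4$ is attained at $p=3/4$.
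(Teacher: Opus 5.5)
Your upper bound, the ``moreover'' clause and the value of $\dist(n,\hword)$ are fine and follow the paper. The paper uses a uniform random $3$-colouring, so exactly $|E(G)|/3$ edges are deleted in expectation and no $o(1)$ needs absorbing. The lower bound, however, has a genuine gap, and two of its concrete claims are false.

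First, it is not true that for $p\le 1/2$ a $p$-core CRG has only white vertices. Theorem~\ref{thm:MT} only makes white--white and white--black pairs gray; black vertices joined by white or gray edges survive. For example, $K(0,s)$ and $K(2,2)$ are $p$-core with $g_{K(0,s)}(p)=(1-p)/s$ and $g_{K(2,2)}(p)=p(1-p)/2$. Both fall below $p/3$ (for $s>3(1-p)/p$, resp.\ $p>1/3$), and neither is excluded by the two configurations you list.

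Second, for $p>1/2$, restricting black vertices to be gray-adjacent only to gray-bipartite white sets cannot work. $K(1,1)$ obeys that restriction, yet $g_{K(1,1)}(p)=p(1-p)<\min\{p/3,1-p\}$ for $p\in(2/3,1)$. Likewise $g_{K(0,2)}(p)=(1-p)/2$ beats the bound for $p>3/5$. Black vertices must be eliminated outright, not controlled.

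The missing idea is that $K(1,1),K(0,2)\notin\KK(\hword)$, and your own neighbourhood argument delivers this. The sun $F_1'$ is a split non-comparability graph, and the prism $F_2'=\overline{C_6}$ is a co-bipartite one. Adding an apex gives non-word-representable graphs (the paper's $F_1$ and $F_2$) with $F_1\mapsto K(1,1)$ and $F_2\mapsto K(0,2)$. Together with Theorem~\ref{thm:MT}(a), this forces every $p$-core $K\in\KK(\hword)$ to have all vertices of one colour, and an all-black one to have no gray edges. That disposes of everything except all-white CRGs with $p>1/2$; for $p\le 1/2$ an all-white core is $K(r,0)$ with $r\le 3$, as you say.

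The remaining case is where the real work lies, and ``solve the quadratic program directly, using the forbidden structures'' specifies neither the structures nor the argument. $K_4$-freeness of the gray graph is far too weak. A white CRG whose gray edges form a spanning star $K_{1,t}$, with all other edges black, has $g=p(1-p)+\frac{(2p-1)p^2}{t+2p-1}\to p(1-p)$, again below the target for $p>2/3$.

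You need two exclusions, each requiring one spare vertex: a gray $K_{1,3}$ (via $F_1$) and a gray $C_3$, $C_4$ or $C_5$ (via $W_5$). Then the gray graph is exactly $K_{1,3}$, $C_3$, $C_4$ or $C_5$, or else has maximum degree at most $2$ and girth at least $6$.

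For that last family the paper uses the gray-degree identities of Theorem~\ref{thm:graydeg}: $\dg(v)=G+P\,\x(v)$, with $G=\frac{1-p-g}{1-p}$ and $P=\frac{2p-1}{1-p}$. It applies them at the gray edge $v_1v_2$ maximizing $\x(v_1)+\x(v_2)$. The girth bound makes the closed gray neighbourhoods of the neighbouring vertices disjoint, which gives a weight-sum inequality. Splitting on the sign of $-1+P+P^2$ or $-1+P^2$ then yields $g_K(p)\ge\min\{p/3,1-p\}$.

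So your ``main obstacle'' is misplaced. Mixed-colour cores die immediately once $K(1,1)$ is forbidden, and it is the all-white case with $p>1/2$ that needs this computation.
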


We also know the edit distance function for $\hkword{k}$, the property of being $k$-word-representable.
\begin{theorem}
    Let $k\geq 2$. 
    For all $p\in [0,1]$, $\ed_{\hkword{k}}(p)=\min\bigl\{p,1-p\bigr\}$.
    Consequently, $\ds \dist\bigl(n,\hkword{k}\bigr)=\tfrac{1}{2}\tnct-o\bigl(n^2\bigr)$. 
    Moreover, if $\dist\bigl(G,\hkword{k})\geq\bigl(\tfrac{1}{2}-\epsilon\bigr)\tnct$, then $\bigl|E(G)\bigr|/\tnct\in \bigl[\tfrac{1}{2}-\epsilon,\tfrac{1}{2}+\epsilon\bigr]$.
    \label{thm:kword}
\end{theorem}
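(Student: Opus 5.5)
The plan is to bound $\ed_{\hkword{k}}(p)$ from above by explicit edits to two trivial graphs, and from below via the colored regularity graph (CRG) framework of Balogh--M.\ and Marchant--Thomason, combined with a counting argument.

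\emph{Upper bound.} For $k\geq 2$, both $K_n$ and $\overline{K_n}$ lie in $\hkword{k}$. The word $(12\cdots n)^k$ represents $K_n$, since every pair of letters alternates. The word $1^k2^k\cdots n^k$ represents $\overline{K_n}$, because $k\geq 2$ forces each pair to contain a factor $aa$, so no pair alternates. An $n$-vertex graph $G$ with $|E(G)|=p\tnct$ can therefore be edited to $\overline{K_n}$ with $p\tnct$ changes, or to $K_n$ with $(1-p)\tnct$ changes. Hence $\dist(G,\hkword{k})\leq\min\{p,1-p\}$ holds for every $G$, not just asymptotically. This inequality alone gives the ``moreover'' clause: if $\dist(G,\hkword{k})\geq \frac12-\epsilon$, then $p\geq\frac12-\epsilon$ and $1-p\geq\frac12-\epsilon$.

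\emph{Lower bound.} I will use the representation $\ed_\HH(p)=\min_{K}f_K(p)$, where $K$ ranges over the CRGs whose realizations all lie in $\HH$. I will also use the Marchant--Thomason fact that for $p\in(0,1)$ the minimum is attained on a $p$-core CRG, and in a $p$-core CRG all edges are gray.

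The key step is a counting argument showing that no admissible CRG can contain a gray edge. A labeled $k$-word-representable graph on $[n]$ is determined by some $k$-uniform word. There are at most $(kn)!/(k!)^n=2^{O(n\log n)}$ such words, so there are only $2^{O(n\log n)}$ labeled graphs in $\hkword{k}$ on $[n]$. If an admissible CRG $K$ had a gray edge $uv$, then blowing up $u$ and $v$ into parts of size $n/2$ gives one fixed graph inside each part, an arbitrary bipartite graph between the parts, and fixed adjacencies elsewhere. Every such graph would lie in $\hkword{k}$, which yields at least $2^{n^2/4}$ labeled members, a contradiction. Therefore for $p\in(0,1)$ every admissible $p$-core CRG has a single vertex. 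A white vertex contributes $f_K(p)=1-p$ and a black vertex contributes $f_K(p)=p$; both are admissible by the upper bound paragraph. This gives $\ed_{\hkword{k}}(p)=\min\{p,1-p\}$ on $(0,1)$, and the endpoints $p=0,1$ are trivial (the value is $0$ there).

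Finally, $\max_p\min\{p,1-p\}=\frac12$ at $p=\frac12$. Together with the Balogh--M.\ theorem, which identifies $\ed$ with the maximum over graphs of density $p$, this gives $\dist(n,\hkword{k})=\frac12\tnct-o(n^2)$.

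The main obstacle is correctly invoking the $p$-core reduction, namely that $p$-core CRGs have only gray edges for $0<p<1$. If I want to avoid that citation, I would fall back on showing directly that any admissible CRG with at least two vertices and no gray edges has $f_K(p)\geq\min\{p,1-p\}$. That should follow from the quadratic form $f_K(p)=\min_{\x}\x^T M_K(p)\x$, with all entries of $M_K(p)$ lying in $\{p,1-p\}$ and the diagonal equal to the vertex weights.
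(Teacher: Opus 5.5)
Your upper bound, the ``moreover'' clause, and the counting step are correct. The lower-bound line as you wrote it, however, rests on a false citation. Marchant--Thomason (Theorem~\ref{thm:MT}) do not say that every edge of a $p$-core CRG is gray. They force white--black edges to be gray. When $p\le 1/2$ they force white--white edges to be gray, but black--black edges may be white. When $p\ge 1/2$ they force black--black edges to be gray, but white--white edges may be black. So ``every admissible $p$-core CRG has a single vertex'' is false. For instance, take $p<1/2$ and two black vertices joined by a white edge. Its realizations are disjoint unions of two cliques, represented by $(1\cdots a)^k\,(a{+}1\cdots a{+}b)^k$, so it lies in $\KK(\hkword{k})$. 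It is $p$-core with $g_K(p)=1/2<1-p$. (Also, a single white vertex gives $p$ and a single black vertex gives $1-p$, not the other way round.)

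Your fallback repairs this and is the cleaner argument, so make it the main line and drop the $p$-core step. If $K$ has no gray edges, every entry of ${\bf M}_K(p)$ is $p$ or $1-p$. Hence $\x^T{\bf M}_K(p)\x\ge\min\{p,1-p\}\,(\x^T{\bf 1})^2=\min\{p,1-p\}$ for every probability vector $\x$. So $g_K(p)\ge\min\{p,1-p\}$ for every gray-free CRG, with no $p$-core reduction needed. The quadratic-form minimum is $g_K$, not $f_K$, and the representation theorem reads $\inf f_K=\min g_K$ rather than $\min f_K$. The bound holds for both, so nothing breaks.

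With that repair the proof is correct, and it excludes gray edges by a genuinely different route from the paper. The paper uses explicit forbidden graphs:
\begin{itemize}
\item $F_1\mapsto K(1,1)$ and $F_2\mapsto K(0,2)$, which are non-word-representable by Kitaev--Pyatkin;
\item the crown graph $H_{2k+2,2k+2}\mapsto K(2,0)$, which is not $k$-word-representable by Glen--Kitaev--Pyatkin.
\end{itemize}
It then bounds $g_K$ for gray-free $p$-core CRGs via Theorem~\ref{thm:MT} and Corollary~\ref{cor:graydeg}.

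Your count compares at most $(kn)!/(k!)^n=2^{O(n\log n)}$ labeled members on $[n]$ against $2^{\lfloor n/2\rfloor\lceil n/2\rceil}$ realizations of any two-vertex CRG with a gray edge. It is self-contained and needs neither the crown-graph theorem nor any specific non-representable graph. It also proves more: every hereditary property with $2^{o(n^2)}$ labeled members that contains all cliques and all edgeless graphs has edit distance function $\min\{p,1-p\}$. The paper's route gives explicit obstructions and reuses the CRG machinery it needs for $\hword$ and $\hcomp$. There counting cannot help, since both properties contain all bipartite graphs.
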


We note that the result is the same for any $k\geq 2$.
As far as $\hkword{1}$, the only 1-word-representable graphs are complete graphs and so $\dist\bigl(G,\hkword{1}\bigr)=\binom{n}{2}-|E(G)|$ for any graph $G$ on $n$ vertices.

Finally, we address the hereditary property of comparability graphs, denoted $\hcomp$. 

\begin{definition}
    For every integer $k\geq 2$, let $p_k$ satisfy $\frac{2p_k-1}{1-p_k}=2\cos\bigl(\frac{2\pi}{k+1}\bigr)$. 
    \label{defn:pk}
\end{definition}
Equivalently, $p_k=1-1/\bigl(2\cos\bigl(\frac{2\pi}{k+1}\bigr)+2\bigr)$.
Observe that $p_2=0$, $p_3=1/2=0.5$, $p_4=\bigl(\sqrt{5}-1\bigr)/2\approx 0.618$, $p_5=2/3\approx 0.667$, $p_6\approx 0.692$, $p_7=\sqrt{2}/2\approx 0.707$ and so on. 
Furthermore the sequence $\{p_k\}_{k\geq 2}$ increases to a limit of $3/4$. 
\begin{theorem}
    There is a sequence of functions $g_2(p)=p/2, g_3(p)=\frac{p}{1+2p}, g_4(p)=\frac{-1+3p-p^2}{6p-2}, g_5(p)=\frac{-2+4p-p^3}{-4+6p+3p^2}, g_6(p), g_7(p), \ldots$ such that for all $p\in [0,1]$, 
    $$\ts \ed_{\hcomp}(p)=\min\bigl\{\frac{p}{2}, \frac{p}{1+2p}, \frac{-1+3p-p^2}{6p-2}, \frac{-2+4p-p^3}{-4+6p+3p^2}, g_6(p), g_7(p), \ldots, 1-p\bigr\}. $$
    Furthermore, for each $k\geq 2$, $\ed_{\hcomp}(p)=g_k(p)$ for $p\in\bigl(p_{k},p_{k+1}\bigr]$ and $\ed_{\hcomp}(p)=1-p$ for $p\in\bigl[3/4,1\bigr]$.
    
    Consequently, $\dist\bigl(n,\hcomp\bigr)=\tfrac{5}{18}\tnct-o\bigl(n^2\bigr)$ and the maximum of $\ed_{\hcomp}(p)$ occurs at $p=p_5=2/3$.
    Moreover, if $\epsilon\in (0,0.001)$ and $\dist\bigl(G,\hcomp)\geq\bigl(\tfrac{5}{18}-\epsilon\bigr)\tnct$, then $\bigl|E(G)\bigr|/\tnct\in \bigl(\tfrac{2}{3}-\sqrt{2\epsilon},\tfrac{2}{3}+12\epsilon\bigr)$.
    \label{thm:comp}
\end{theorem}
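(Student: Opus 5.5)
We will use the standard colored regularity graph (CRG) machinery of Balogh--Martin and Marchant--Thomason. For a nontrivial hereditary property, $\ed_{\hcomp}(p)=\inf_{K}f_K(p)$, where $K$ ranges over CRGs $K$ such that every graph obtained by realizing $K$ lies in $\hcomp$. Here a CRG has white/black/gray vertices and white/black/gray edges, and
$$f_K(p)=\min_{\x}\bigl[p\,\x^T W_K\x+(1-p)\,\x^T B_K\x\bigr]$$
over probability vectors $\x$, with $W_K$ and $B_K$ encoding the white and black parts.

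The proof then splits into an upper bound, which requires exhibiting the right CRGs, and a lower bound, which requires showing that no CRG does better.

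\textbf{Upper bound.} Two CRGs are immediate.
\begin{itemize}
\item A single black vertex realizes a clique, which is a comparability graph. Its cost is $f=1-p$.
\item Two white vertices joined by a gray edge realize bipartite graphs, which are comparability graphs of height-2 posets. Its cost is $f=p/2$.
\end{itemize}
For each $k\geq 3$, the natural candidate $K^{(k)}$ is a cycle-like gadget. Take $k$ vertices, some black and some white, with gray edges along a path or cycle of length $k$ and black or white edges elsewhere. The structure should be chosen so that every realization admits a transitive orientation: orient the gray edges between consecutive parts consistently, and make the black cliques linearly ordered.

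Since $\frac{2p_k-1}{1-p_k}=2\cos\frac{2\pi}{k+1}$ is an eigenvalue of a path or cycle adjacency matrix, the quadratic form $M=pW+(1-p)B$ restricted to $K^{(k)}$ is likely of the form $(1-p)I+(2p-1)A_{\text{path}}$, possibly plus a diagonal correction. Its minimizer is therefore governed by the Perron-type eigenvector of a path on $k$ vertices, and $\x$ stays positive exactly when $p\in(p_k,p_{k+1}]$. Computing $1/(\mathbf 1^T M^{-1}\mathbf 1)$ for $k=3,4,5$ should reproduce $\frac{p}{1+2p}$, $\frac{-1+3p-p^2}{6p-2}$ and $\frac{-2+4p-p^3}{-4+6p+3p^2}$. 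I would first confirm these three computations to pin down the exact gadget, then define $g_k$ generally by this formula.

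\textbf{Lower bound.} This is the main obstacle. We must show that any CRG $K$ that forbids some non-comparability graph from its realizations has $f_K(p)\ge \min_k g_k(p)$.

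Standard reductions come first. By Martin's results, we may assume $K$ is $p$-core: for $p\ge1/2$, all edges are white or black, gray only within white vertices or between vertices, and the optimal weight vector is positive. Then we use which structures are forbidden. Comparability graphs exclude odd holes of length at least 5, odd antiholes, and the Gallai list. So a $p$-core CRG in $\hcomp$ cannot contain:
\begin{itemize}
\item a gray triangle on white vertices, since it would contain $C_5$ realizations;
\item a black vertex with two gray neighbours in certain configurations;
\item long gray odd paths.
\end{itemize}
I expect the resulting structure to be that gray edges form a disjoint union of paths, and black vertices form cliques. Optimizing the quadratic form over such structures reduces to an eigenvalue comparison for path matrices, which yields exactly the $g_k$.

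For $p\ge 3/4$ we show that $1-p$ wins. Here we use that $p_k\to3/4$ and a direct inequality $g_k(p)\ge 1-p$ on $[3/4,1]$.

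Finally, maximizing the piecewise function gives the value $5/18$ at $p=2/3$. The stability statement then follows from local expansions of $g_4$ and $g_5$ near $2/3$ together with the Balogh--Martin concentration theorem applied to $G$ via regularity. The expansions give the $\sqrt{2\epsilon}$ width on the left, where the maximum sits at a corner with quadratic behaviour on $g_4$'s side, and the linear $12\epsilon$ width on the right.
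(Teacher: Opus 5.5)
Your framework (CRGs, the $p$-core reduction, path eigenvalues $2\cos\frac{a\pi}{k+1}$) is the right one. However, neither bound is actually carried out. The step that decides the answer, the structural reduction in the lower bound, is missing and partly replaced by false claims.

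What is needed is a short list of colored homomorphisms:
\begin{itemize}
\item The net $F_1'$ is split, so $F_1'\mapsto K(1,1)$, and a $p$-core CRG must be monochromatic.
\item $F_2'=\overline{C_6}$ is co-bipartite, so $F_2'\mapsto K(0,2)$, and an all-black $p$-core CRG is a single vertex for $p>1/2$.
\item $C_5\mapsto K(3,0)$, which gives $p/2$ for $p\le 1/2$.
\item For $p>1/2$ on white vertices (all edges black or gray), the gray graph has maximum degree at most $2$, because $F_1'$ maps into a gray $K_{1,3}$ (pendant vertices to the center, triangle to the leaves).
\item In the same setting the gray graph is acyclic, because $C_5$ maps into a gray $C_3$, $C_4$ or $C_5$, and $\overline{C_t}$ maps into a gray $C_t$ for $t\ge 6$.
\end{itemize}
Since recoloring black edges gray cannot increase $g_K$, the resulting gray linear forest may be completed to a spanning path. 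This gives $\widetilde{P}_k$: $k$ white vertices, a gray spanning path, all other edges black.

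The degree-$2$ step is absent from your list, and it is indispensable. The gray-star CRG on four white vertices has $g_K(p)=p(1-p)+\frac{(2p-1)p^2}{2+2p}$, which equals $\frac{4}{15}<\frac{5}{18}$ at $p=2/3$.

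Conversely, your exclusion of ``long gray odd paths'' is false: every $\widetilde{P}_k$ is admissible. Orient $V_i\to V_j$ for $i<j$; nonconsecutive parts are complete, so this orientation is transitive. This also settles the gadget you left open in the upper bound. With the remaining edges black, neither a gray cycle nor a gray edge between a white and a black vertex is admissible.

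Your criterion for which $k$ wins on $(p_k,p_{k+1}]$ is also not the right mechanism. After normalizing, $\mathbf{J}-\frac{1}{1-p}\mathbf{M}_K(p)=A_{\rm path}-P\mathbf{I}$ with $P=\frac{2p-1}{1-p}$. The optimal vector of $\widetilde{P}_k$ does not lose positivity at $p_{k+1}$. For $\widetilde{P}_3$ it is $\frac{1}{1+2p}[p,1,p]$ for every $p\in(1/2,1)$, and in general $\widetilde{P}_k$ stays $p$-core on $(p_k,p_{2k-1})$.

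The threshold $p_k$ comes from the \emph{second} eigenvector $\mathbf{w}_2$, which is orthogonal to $\mathbf{1}$, not from a Perron-type eigenvector. If $\widetilde{P}_k$ were $p$-core with $2\cos\frac{2\pi}{k+1}-P\ge 0$, then moving $\x$ along $\mathbf{w}_2$ would not increase $\x^T\mathbf{M}_K(p)\x$. That contradicts uniqueness of the optimum, so for $p\le p_k$ the CRG $\widetilde{P}_k$ reduces to a shorter path.

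For $p\in(p_k,p_{k+1})$ you need an actual comparison $g_{\widetilde{P}_k}(p)<g_{\widetilde{P}_{k-1}}(p)$. The paper gets it by solving $x_{a-1}-Px_a+x_{a+1}=G$ with $x_0=x_{k+1}=0$ and $P=2\cos\theta$. It then shows that the difference of $\frac{2-2\cos\theta}{G}$ between $k-1$ and $k$ equals $\tan(\frac{k+1}{2}\theta)\tan(\frac{k}{2}\theta)>0$.

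Similarly, your ``direct inequality'' $g_k(p)\ge 1-p$ on $[3/4,1]$ is unsupported. The expression $1/(\mathbf{1}^T\mathbf{M}^{-1}\mathbf{1})$ equals $g_{\widetilde{P}_k}$ only where $\widetilde{P}_k$ is $p$-core. The paper instead closes a core path into a gray cycle, whose uniform weight gives $(1-p)+\frac{4p-3}{\ell}\ge 1-p$.

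Finally, the stability clause needs no concentration theorem, which in any case concerns $G(n,p)$ rather than an arbitrary $G$. Random partitioning along $\widetilde{P}_4$ or $\widetilde{P}_5$ edits any fixed $G$ with exactly $\binom{n}{2}\x^T\mathbf{M}_K(q)\x$ expected changes, where $q$ is the density of $G$. Your local bounds on $g_4$ and $g_5$ then finish the argument.
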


\section{Techniques for computing the edit distance function}
\label{sec:edit}

The key objects in computing the edit distance function were called colored regularity graphs (CRGs) by Alon and Stav~\cite{AS}. 
\begin{definition}
A \emph{colored regularity graph (CRG)} $K$ is a simple complete graph together with a partition of its vertices into white and black vertices and a partition of its edges into white, black, and gray edges.
    We will use $K=\bigl(\VW,\VB;\EW,\EB,\EG\bigr)$ to denote that $K$ is a CRG with white vertex set $\VW$, black vertex set $\VB$, white edge set $\EW$, black edge set $\EB$ and gray edge set $\EG$. 
    
    As far as notation, the dependence on $K$ is usually suppressed if the CRG is understood. 
\end{definition}
Colored regularity graphs come from the analysis of graphs using Szemer\'edi's regularity lemma. 
In particular, one can apply the regularity lemma a second time, inside each of the clusters given by the first iteration and then Ramsey's theorem.
A convenient form of the regularity lemma for these purposes is given by Alon, Fischer, Krivelevich, and Szegedy~\cite{AFKSz}. 

CRGs are used to represent a graph after the editing is finished. 
As such, we need to demonstrate how forbidden graphs are not in the edited graph. 
This yields the idea of a colored homomorphism.
If $G$ is a graph, then $\overline{G}$ denotes the graph complement. 
The notation $\sqcup$ denotes the disjoint union of sets.

\begin{definition}
    A \emph{colored homomorphism} from a graph $G$ to a CRG $K=(\VW, \VB;$ $\EW, \EB, \EG)$ is a map $\varphi: V(G)\rightarrow V(K)$ such that 
    \begin{itemize}
        \item If $uv\in E(\overline{G})$, then either $\varphi(u)=\varphi(v)\in\VW$ or $\varphi(u)\neq\varphi(v)$ and $\varphi(u)\varphi(v)\in\EW\sqcup\EG$. 
        \item If $uv\in E(G)$, then either $\varphi(u)=\varphi(v)\in\VB$ or $\varphi(u)\neq\varphi(v)$ and $\varphi(u)\varphi(v)\in\EB\sqcup\EG$.
    \end{itemize}
    If there is a colored homomorphism from $G$ to $K$, then we write $G\mapsto K$. 
    If there is no colored homomorphism from $G$ to $K$, then we write $G\not\mapsto K$.
\end{definition}

Finally, we need to classify the colored regularity graphs that can represent graphs in $\HH$. 
\begin{definition}
    Given a hereditary property $\HH=\bigcap_{F\in\FF(\HH)}\forb(F)$, where $\FF(\HH)$ denotes the set of graphs that do not appear as induced subgraphs of members of $\HH$, the set of \emph{CRGs associated with $\HH$}, denoted $\KK(\HH)$, are the CRGs $K$, for which $F\not\mapsto K$ for all $F\in\FF(\HH)$. 
\end{definition}

For each CRG $K$, there are two functions, $f_K(p)$ and $g_K(p)$ that asymptotically measure the amount of editing that is done to $G(n,p)$ according to $K$.
In the case of $f_K(p)$, it is where the graph is partitioned into $|V(K)|$ equally-sized parts. 
In the case of $g_K(p)$, it is where the graph is partitioned optimally. 
\begin{definition}
    Let $K=\bigl(\VW,\VB;\EW,\EB,\EG\bigr)$ be a CRG on vertex set $\VW\cup\VB=\{v_1,\ldots,v_k\}$. 
    Let ${\bf M}_K(p)$ denote the matrix with entries defined as follows:
    $$
        m_{K}(p)_{ij} = \begin{cases}
                            p,      &   \text{if }i\neq j\text{ and }v_iv_j\in\EW\text{ or if }i=j\text{ and }v_i\in\VW; \\
                            0,      &   \text{if }i\neq j\text{ and }v_iv_j\in\EG;\\
                            1-p,    &   \text{if }i\neq j\text{ and }v_iv_j\in\EB\text{ or if }i=j\text{ and }v_i\in\VB.
                       \end{cases}
    $$
    The functions $f_K$ and $g_K$ are defined as follows:
    \begin{align*}
        f_K(p)  &=  \tfrac{1}{k^2} \bigl[p\bigl(|\VW|+2|\EW|\bigr) + (1-p)\bigl(|\VB|+2|\EB|\bigr)\bigr] = \tfrac{1}{k^2}{\bf 1}^T{\bf M}_K(p){\bf 1} \\
        g_K(p)  &=  \min\bigl\{\x^T\cdot {\bf M}_K(p)\cdot\x : \x^T{\bf 1}=1, \x\geq{\bf 0}\bigr\} .  
    \end{align*}
    The vector ${\bf 0}$ is the all-zeroes vector, ${\bf 1}$ is the all-ones vector and vector inequalities are coordinatewise. 
    \label{defn:fandg}
\end{definition}

In the theorem below, which shows that we only need to consider CRGs to compute the edit distance function, the ``$\inf$'' expressions are due to Balogh and the second author whereas the ``$\min$'' equality was established by Marchant and Thomason. 
\begin{theorem}[Balogh-M.~\cite{BM}; Marchant-Thomason~\cite{MT}]
    Let $\HH$ be a nontrivial hereditary property. For any $p\in [0,1]$,
    \begin{align*}
        \ed_{\HH}(p)    &=  \inf\bigl\{f_K(p) : K\in\KK(\HH)\bigr\} = \inf\bigl\{g_K(p) : K\in\KK(\HH)\bigr\} \\
                        &=  \min\bigl\{g_K(p) : K\in\KK(\HH)\bigr\} .
    \end{align*}
\end{theorem}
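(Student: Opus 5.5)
The plan is to prove the chain $\ed_{\HH}(p)\le\inf g_K\le\inf f_K\le\ed_{\HH}(p)$, and then to show separately that the infimum of $g_K$ is attained.

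\emph{Upper bound.} Fix $K\in\KK(\HH)$ on $\{v_1,\ldots,v_k\}$ and a feasible $\x$.
\begin{enumerate}[(i)]
\item Partition the vertex set of $G(n,p)$ into parts $V_1,\ldots,V_k$ with $|V_i|\approx x_in$.
\item Edit inside each $V_i$: make $V_i$ a clique if $v_i\in\VB$ and an independent set if $v_i\in\VW$.
\item Edit between parts: for $i\neq j$, make $(V_i,V_j)$ complete if $v_iv_j\in\EB$ and empty if $v_iv_j\in\EW$. Leave gray pairs untouched.
\end{enumerate}
Let $H'$ be the edited graph. The map sending $V_i$ to $v_i$ is a colored homomorphism $H'\mapsto K$, and it restricts to every induced subgraph of $H'$. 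So if some $F\in\FF(\HH)$ were induced in $H'$, we would get $F\mapsto K$, contradicting $K\in\KK(\HH)$. Hence $H'\in\HH$. By concentration, the number of edits is $(\x^T{\bf M}_K(p)\x+o(1))\tnct$. Combined with the Balogh--M. theorem above (that $\ed_{\HH}(p)$ is the limit of $\mathbb{E}[\dist(G(n,p),\HH)]$), this gives $\ed_{\HH}(p)\le g_K(p)$.

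\emph{Comparing $g$ and $f$.} Since $f_K(p)=g_K$-objective at $\x={\bf 1}/k$, we have $\inf g\le\inf f$. For the reverse inequality, approximate an optimal $\x$ by a rational vector with common denominator $N$. Replace each $v_i$ by $Nx_i$ clones of the same color, joined to one another by edges of that color. Any colored homomorphism into the blown-up CRG composes with the collapse map to give one into $K$, so the blow-up still lies in $\KK(\HH)$. Its $f$-value is the $g_K$-objective at the rational approximation, and continuity in $\x$ gives $\inf f\le\inf g$.

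\emph{Lower bound.} Take $G=G(n,p)$ and a closest $H'\in\HH$.
\begin{enumerate}[(i)]
\item Apply the strong regularity lemma of Alon--Fischer--Krivelevich--Szegedy to $H'$, giving a bounded number of clusters with nested regular refinements.
\item Use Ramsey's theorem inside each cluster to pick large subclusters whose internal structure is essentially homogeneous (nearly complete or nearly empty).
\item Build a CRG $K$ from this data. Color a vertex black or white by the internal density of its subcluster in $H'$. Color a pair white if its density is near $0$, black if near $1$, and gray otherwise.
\item Show $K\in\KK(\HH)$: by the induced counting lemma, any $F\mapsto K$ would yield an induced copy of $F$ in $H'$, which is impossible.
\end{enumerate}
Every pair of clusters in $G$ has density about $p$. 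So a pair colored white or black forces roughly $p$ or $1-p$ of its edges to be edited, and the same holds inside clusters. Adding up, $\dist(G,H')\ge\x^T{\bf M}_K(p)\x-o(1)$, where $\x$ is the vector of cluster proportions. Hence $\ed_{\HH}(p)\ge\inf_K g_K(p)$.

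\emph{Attainment, the main obstacle.} The hard step is upgrading $\inf$ to $\min$; the comparability case, with its infinite sequence of CRGs, shows that there is no a priori bound on $|V(K)|$. First restrict to the support of an optimal $\x$, so that only $p$-core CRGs need to be considered. Then take a minimizing sequence $K_1,K_2,\ldots$ with weight vectors $\x^{(1)},\x^{(2)},\ldots$. I would follow Marchant--Thomason and pass to a limit object, a weighted type or graphon-like structure, by compactness. The goal is to show that this limit itself is realized by a single CRG in $\KK(\HH)$ whose $g$-value equals the infimum. The delicate points are two: weights that vanish in the limit must be handled carefully, and one must check that no forbidden $F$ maps into the limit.
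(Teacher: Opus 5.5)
The paper does not prove this statement; it quotes the $\inf$ equalities from \cite{BM} and the $\min$ from \cite{MT}. Your first three parts follow the standard Alon--Stav/Balogh--Martin route and are essentially sound, up to the repairs noted in the next paragraph: the partition upper bound, the blow-up comparison of $f_K$ and $g_K$, and the regularity lower bound.

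\emph{The genuine gap is the last step.} Upgrading $\inf$ to $\min$ is the part the paper attributes to Marchant--Thomason, and what you write there describes the difficulty without resolving it. Compactness of weight vectors does not by itself produce a CRG. In a minimizing sequence the orders can be unbounded and all weights can tend to $0$. The limit can then look like no member of the sequence. For example, let $p>1/2$ and let $K_a$ be the all-white CRG made of two black cliques of order $a$ joined by gray edges. Then $g_{K_a}(p)=\frac{1-p}{2}+\frac{2p-1}{2a}$ decreases to $\frac{1-p}{2}=g_{K(0,2)}(p)$, a value realized by a CRG with black vertices, although every $K_a$ is all-white.

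Your outline also uses $\FF(\HH)$ only to check that the limit lies in $\KK(\HH)$, never to show that a finite limit exists. Without such input the conclusion is false. Take $\HH$ to be all graphs: this is nontrivial in the paper's sense, and $\FF(\HH)=\emptyset$. Then $\ed_\HH(p)=0$ and $g_{K(r,0)}(p)=p/r\to 0$. Yet every CRG satisfies $g_K(p)\geq\min\{p,1-p\}/|V(K)|>0$ for $p\in(0,1)$, since the diagonal of ${\bf M}_K(p)$ is positive and the off-diagonal is nonnegative. So the minimum is not attained.

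A correct argument must start from some $F\in\FF(\HH)$ on $h$ vertices, so that no member of $\KK(\HH)$ contains a gray $K_h$. It must then do one of two things:
\begin{enumerate}[(1)]
\item bound the order of near-optimal $p$-core members of $\KK(\HH)$; or
\item show how a large set of small-weight vertices, which by Ramsey contains a large black or white clique, can be merged into a single vertex of that color. The merged vertex's edges to the rest must be made uniform, so that every colored homomorphism into the new CRG lifts to the old ones. The resulting change in $\x^T{\bf M}_K(p)\x$ must be controlled by the maximum weight.
\end{enumerate}
None of this is in your proposal.

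Two smaller repairs are needed in the lower bound, and one simplification is available in the upper bound.
\begin{enumerate}[(1)]
\item $\FF(\HH)$ may contain graphs of unbounded order; for $\hcomp$ it contains every $\overline{C_t}$ with $t\geq 6$. So no fixed regularity parameter makes the induced counting lemma in your step (iv) work for all $F$. You must choose the function in the Alon--Fischer--Krivelevich--Szegedy lemma, and the number of Ramsey pieces per cluster, in terms of $k$: specifically, the maximum over the finitely many $k$-vertex CRGs outside $\KK(\HH)$ of the smallest order of an $F\in\FF(\HH)$ mapping into each.
\item Your claim ``the same holds inside clusters'' is not justified. The vertex colors come from the Ramsey step on a subcluster and say nothing about the edit cost inside the whole cluster $V_i$. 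The fix is to force the number $k$ of clusters to be large, so the diagonal terms contribute at most $1/k$.
\item The upper bound needs no random-graph theorem. For any $G$ with $\lfloor p\tnct\rfloor$ edges, place each vertex independently into $V_i$ with probability $x_i$. The expected number of edits is then $\x^T{\bf M}_K(p)\x\tnct$ up to rounding, exactly as in the paper's own upper-bound arguments.
\end{enumerate}
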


If we have a CRG $K$ and can delete a vertex without raising the value of $g_K(p)$, we would prefer to work with the smaller CRG. 
Thus, for any value of $p$, we show that we can restrict to a certain type of CRG which we call $p$-core.
\begin{definition}
    If $K$ is a CRG, then $K'$ is a \emph{sub-CRG of $K$} if it is formed by deleting vertices from $K$ and is \emph{proper} if $K'\neq K$.

    For any $p\in (0,1)$, a CRG $K$ is \emph{$p$-core} if for every proper sub-CRG $K'$, $g_{K'}(p)>g_{K}(p)$. 

    For any $p\in (0,1)$ and nontrivial hereditary property $\HH$, we denote $\KK_p(\HH)$ to be the set of all CRGs in $\KK(\HH)$ that are also $p$-core. 
\end{definition}

Marchant and Thomason gave a strong characterization for $p$-core CRGs. 
\begin{theorem}[Marchant-Thomason~\cite{MT}]
    Let $K$ be a $p$-core CRG with $V(G)=\VW\sqcup\VB$ and $E(K)=\EW\sqcup\EG\sqcup\EB$ and let $v,w\in V(K)$.
    \begin{enumerate}[(a)]
        \item If $v\in\VW$ and $w\in\VB$, then $vw\in\EG$. \label{it:MT:vwvb}
        \item If $p\leq 1/2$ and $v,w\in\VW$, then $vw\in\EG$. \label{it:MT:psmall:vw}
        \item If $p\leq 1/2$ and $v,w\in\VB$, then $vw\in\EW\cup\EG$. \label{it:MT:psmall:vb}
        \item If $p\geq 1/2$ and $v,w\in\VB$, then $vw\in\EG$. \label{it:MT:plarge:vb}
        \item If $p\geq 1/2$ and $v,w\in\VW$, then $vw\in\EB\cup\EG$. \label{it:MT:plarge:vw}
    \end{enumerate}
    \label{thm:MT}
\end{theorem}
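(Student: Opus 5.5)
The plan is to study an optimal weight vector and show that, whenever a forbidden configuration occurs, we can shift weight between vertices so that the quadratic form does not increase while some vertex ends up with weight zero. Deleting that vertex gives a proper sub-CRG $K'$ with $g_{K'}(p)\le g_K(p)$, which contradicts $p$-coreness.

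First we set up the structure of optimal vectors. Fix $p\in(0,1)$ and a $p$-core CRG $K$, and let $\x$ be an optimal vector for $g_K(p)$.

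\begin{enumerate}[(1)]
\item Every coordinate of $\x$ is positive. If some $x_v=0$, then restricting $\x$ to $K-v$ shows $g_{K-v}(p)\le g_K(p)$, contradicting $p$-coreness. (If $K$ has a single vertex, the statement is vacuous.)
\item By Lagrange multipliers on the interior of the simplex, $({\bf M}\x)_u=g_K(p)$ for every vertex $u$.
\end{enumerate}

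Now fix two vertices $v,w$. Consider the perturbation $\x_t=\x+t(\mathbf{e}_v-\mathbf{e}_w)$, which stays feasible for $t\in[-x_v,x_w]$. Its value is
$$\x_t^T{\bf M}\x_t=\x^T{\bf M}\x+2t\bigl(({\bf M}\x)_v-({\bf M}\x)_w\bigr)+t^2\bigl(m_{vv}+m_{ww}-2m_{vw}\bigr).$$
The linear term vanishes by step (2). So if $c:=m_{vv}+m_{ww}-2m_{vw}\le 0$, the function is concave in $t$, and its minimum on $[-x_v,x_w]$ is attained at an endpoint. At that endpoint one coordinate is zero and the value is at most $g_K(p)$, which contradicts step (1). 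Hence every pair must satisfy $m_{vv}+m_{ww}>2m_{vw}$. This one inequality drives every case of the theorem.

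Next we run through the cases, using the entries of ${\bf M}_K(p)$.

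(a) Take $v\in\VW$ and $w\in\VB$, so $m_{vv}+m_{ww}=p+(1-p)=1$. If $vw$ is white, $2m_{vw}=2p$; if $vw$ is black, $2m_{vw}=2(1-p)$. The requirement $1>2p$ and the requirement $1>2(1-p)$ cannot both hold, so this step needs care. Suppose $vw\in\EW$ with $p\ge 1/2$. Then $c=1-2p\le 0$, a contradiction. When $p<1/2$, the white-edge case instead gives $c>0$, so the simple perturbation does not apply. For this case we compare vertex rows instead of perturbing. Since $({\bf M}\x)_v=({\bf M}\x)_w$, we can use the edgewise comparison: every entry in row $v$ is at least the corresponding entry in row $w$, except possibly in the columns $v$ and $w$. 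An alternative is to replace $K$ by the CRG obtained by recoloring $v$ black. That recoloring does not decrease $g$, and the recolored graph must still lie in the same family, but this is not needed for $p$-coreness itself. I expect (a) to be the main obstacle. The approach to try first is to treat the $2\times 2$ principal submatrix on $\{v,w\}$ together with the equal-row-sum conditions. If $vw$ is white and $p<1/2$, moving all the mass of $w$ onto $v$ changes the value by an amount that can be written with the equal-row conditions. One must show this change is nonpositive, using that $m_{vu}\le m_{wu}$ fails only in controlled ways.

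(b)--(e) These are direct from the inequality $c>0$.

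\begin{enumerate}[(i)]
\item Two white vertices with a white edge give $c=2p-2p=0$, which is not positive; a black edge gives $c=2p-2(1-p)=4p-2$, which is nonpositive when $p\le 1/2$. This yields (b), and the white-edge exclusion gives (e).
\item Two black vertices with a black edge give $c=0$, and a white edge gives $c=2-4p$, which is nonpositive when $p\ge1/2$. This yields (c) and (d).
\end{enumerate}
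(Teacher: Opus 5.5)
Your argument for (b)--(e) is correct. Positivity of $\x$, the first-order condition ${\bf M}\x=g_K(p){\bf 1}$, and the two-vertex move $\x+t(\mathbf{e}_v-\mathbf{e}_w)$ together give $m_{vv}+m_{ww}>2m_{vw}$ for every pair, and your case check is right. The same inequality settles (a) at $p=1/2$. It also excludes a black $vw$ when $p<1/2$ and a white $vw$ when $p>1/2$.

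But (a) is left unproved in the two remaining cases: $p<1/2$ with $vw\in\EW$, and $p>1/2$ with $vw\in\EB$. This is a genuine gap. In these cases $c=|1-2p|>0$, and none of the fixes you sketch works:
\begin{itemize}
\item The row-domination claim is false. For $u\neq v,w$ the entries $m_{vu}$ and $m_{wu}$ are unrelated; for example, $vu\in\EG$ and $wu\in\EW$ is allowed.
\item Moving all of $w$'s mass onto $v$ changes the value by exactly $c\,x_w^2=(1-2p)x_w^2>0$ when $p<1/2$, because the linear term vanishes. Your own displayed formula already shows this.
\item Recolouring $v$ does not produce a sub-CRG, so it cannot contradict $p$-coreness.
\end{itemize}
No argument confined to the pair $\{v,w\}$ can succeed. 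The missing idea is a move supported on $v$ together with its whole white neighbourhood, combined with positivity of $\x$ there.

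Here is one way to close the gap. Take $p<1/2$. By what you proved, $K$ has no black edges, and no white edge joins two white vertices. Let $N(u)$ be the set of vertices joined to $u$ by a white edge, and let $S=N(v)\ni w$, so $S\subseteq\VB$. Set $s=\sum_{u\in S}x_u$ and $\y=s\mathbf{e}_v-\sum_{u\in S}x_u\mathbf{e}_u$. Then $\y^T{\bf 1}=0$, and, summing over ordered pairs,
\[
\y^T{\bf M}\y=(1-2p)\sum_{u\in S}x_u^2-p\sum_{u\neq u'\in S,\;uu'\in\EG}x_ux_{u'} .
\]
With $g=g_K(p)$, row $v$ of ${\bf M}\x=g{\bf 1}$ reads $px_v+ps=g$, and row $u\in S$ reads $(1-p)x_u+p\sum_{z\in N(u)}x_z=g$. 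Comparing them gives
\[
(1-2p)x_u=p\Bigl[\sum_{u'\in S\setminus\{u\}}x_{u'}-\sum_{z\in N(u)\setminus\{v\}}x_z\Bigr].
\]
Multiply by $x_u$ and sum over $u\in S$. Since $N(u)\setminus\{v\}\supseteq N(u)\cap S$ and $\x>{\bf 0}$, this yields $\y^T{\bf M}\y\le 0$.

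Hence $\x+\y$ is feasible, vanishes on $S$, and has value $g+\y^T{\bf M}\y\le g$. So $g_{K-S}(p)\le g_K(p)$, contradicting $p$-coreness.

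The case $p>1/2$ follows by passing to the photographic negative $\overline{K}$. Since ${\bf M}_{\overline{K}}(p)={\bf M}_K(1-p)$, $K$ is $p$-core if and only if $\overline{K}$ is $(1-p)$-core.

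The paper itself only cites Marchant--Thomason for this theorem, so there is no in-paper proof to compare with.
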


A \emph{component} of a CRG is a connected component in the graph induced by non-gray edges. 
Riasanovsky and the second author~\cite{MR} gave a full characterization of $p$-core CRGs for all $p\in \bigl[0.381967,0.618033\bigr]$.
\begin{theorem}[M.-Riasanovsky~\cite{MR}]
    For $p\in \bigl(\frac{3-\sqrt{5}}{2},\frac{\sqrt{5}-1}{2}\bigr)$, a $p$-core CRG has the property that all of its components are cliques.
\end{theorem}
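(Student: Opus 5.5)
The statement is the M.–Riasanovsky theorem: for $p\in\bigl(\frac{3-\sqrt5}{2},\frac{\sqrt5-1}{2}\bigr)$, every component of a $p$-core CRG is a clique. The plan is to prove it via the optimality conditions for the quadratic program defining $g_K(p)$.

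Let $K$ be $p$-core and let $\x$ be an optimal weighting, so $g_K(p)=\x^T{\bf M}_K(p)\x$. Since $K$ is $p$-core, every vertex gets positive weight; otherwise deleting a zero-weight vertex would give $g_{K'}(p)\le g_K(p)$. The Karush–Kuhn–Tucker conditions then give ${\bf M}_K(p)\x = g\,{\bf 1}$ with $g=g_K(p)$. In other words, for every vertex $v$,
\begin{align*}
d_W(v)\,p+d_B(v)\,(1-p)=g ,
\end{align*}
where $d_W(v)$ is the total weight of $v$ together with its white neighbours when $v$ is white (and $d_B(v)$ is defined analogously for black).

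Assume by symmetry $p\le 1/2$; the case $p\ge1/2$ follows by complementation, swapping $p\leftrightarrow 1-p$ and white $\leftrightarrow$ black. By Theorem~\ref{thm:MT}, white vertices are joined to everything by gray edges. So every non-gray component is either a single white vertex or a set of black vertices joined by white and gray edges. We must show that within a black component, any two vertices joined by a path of white edges are themselves joined by a white edge. If this fails, there is an induced white path $u\!-\!v\!-\!w$ with $uw$ gray.

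Use $p$-coreness a second way: optimality of $\x$ forces the restriction of the quadratic form to the face of the simplex to be copositive in the relevant directions. Concretely, for every vector $\y$ with $\y^T{\bf 1}=0$ supported on $V(K)$ we need $\y^T{\bf M}\y\ge 0$, and strictly $>0$ in a form that also rules out ties with proper sub-CRGs. Take $\y$ supported on $\{u,v,w\}$. The principal $3\times3$ submatrix is
\begin{align*}
\begin{pmatrix}1-p&p&0\\ p&1-p&p\\ 0&p&1-p\end{pmatrix}.
\end{align*}
Test a direction such as $\y=(a,-b,a)$ with $2a=b$, or more generally optimize over the plane $\y^T{\bf 1}=0$. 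The form becomes negative exactly when the corresponding $2\times2$ reduced matrix fails to be positive semidefinite. The computation should reduce to a condition like $p^2+p-1>0$ or $p^2-3p+1<0$, i.e. $p>\frac{\sqrt5-1}{2}$ or $p<\frac{3-\sqrt5}{2}$. The golden-ratio endpoints strongly suggest that this characteristic polynomial is exactly what is being tested, so inside the interval the form is not PSD on this plane. Moving weight along $\y$ then strictly decreases $\x^T{\bf M}\x$, contradicting optimality.

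The main obstacle is making this contradiction rigorous. Nonnegativity of $\y^T{\bf M}\y$ on the tangent space is only a necessary second-order condition, and the direction $\y$ must keep $\x+t\y\ge 0$, which holds for small $t$ since $\x>0$. Because the objective is quadratic, first-order stationarity combined with a negative second-order term gives a strict decrease, so the argument goes through. The delicate part is choosing the test vector correctly: if the path $u\!-\!v\!-\!w$ alone does not give negativity for all $p$ in the range, I would instead use a longer induced path or a full non-clique component. In that case I would analyse the smallest eigenvalue of ${\bf M}$ restricted to the component, where the Fibonacci-type recurrence from paths produces $\cos(\pi/5)$-type quantities.
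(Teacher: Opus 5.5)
The proposal has a genuine gap. Several parts are sound:
\begin{itemize}
\item the reductions to $p\le\frac12$, to a component of black vertices with white and gray edges, and to an induced white path $u$--$v$--$w$ with $uw$ gray;
\item the second-order principle: $\x>{\bf 0}$ and ${\bf M}_K(p)\x=g{\bf 1}$ give $(\x+t\y)^T{\bf M}_K(p)(\x+t\y)=g+t^2\,\y^T{\bf M}_K(p)\y$ whenever $\y^T{\bf 1}=0$, and $p$-coreness rules out $\y^T{\bf M}_K(p)\y\le 0$.
\end{itemize}

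The decisive computation, however, is asserted rather than carried out, and it comes out differently. For $\y=(a,-(a+c),c)$ on $(u,v,w)$,
\[ \y^T{\bf M}_K(p)\,\y=\tfrac{3-7p}{2}(a+c)^2+\tfrac{1-p}{2}(a-c)^2 . \]
Your direction $(1,-2,1)$ gives $6-14p$. So the $P_3$ test works only for $p\ge\frac37$ (and $p\le\frac47$ after complementing), and nothing is proved on $\bigl(\frac{3-\sqrt5}{2},\frac37\bigr)$.

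Your fallback only partly helps. An induced white $P_4$ $a$--$b$--$c$--$d$ does work: with $\varphi=\frac{1+\sqrt5}{2}$, the vector $\y=(1,-\varphi,\varphi,-1)$ gives $\y^T{\bf M}_K(p)\y=2(2+\varphi)\bigl(1-(1+\varphi)p\bigr)$. This is $\le 0$ exactly when $p\ge\frac{3-\sqrt5}{2}$, and it is the real source of the golden ratio (the photographic negative of the ${\bf w}_2$ computation for $\widetilde{P}_4$ in Lemma~\ref{lem:kpath}).

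But a non-clique component need not contain an induced $P_4$, and then no second-order or eigenvalue argument can succeed. Take three black vertices with $uv,vw$ white and $uw$ gray, at $p=0.4$. Then ${\bf M}_K(p)$ has eigenvalues $1-p$ and $(1-p)\pm\sqrt2\,p$, all positive. This CRG fails to be $p$-core only because its stationary point is not positive. Subtracting rows of ${\bf M}_K(p)\x=g{\bf 1}$ gives $\x(u)=\x(w)$ and $(1-3p)\x(u)=(1-2p)\x(v)$, which is impossible with positive weights when $\frac13<p<\frac12$.

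So the missing idea is to use the first-order equations together with positivity (Theorem~\ref{thm:graydeg}(a)) for $P_4$-free components. One way to finish is as follows.
\begin{itemize}
\item By Theorem~\ref{thm:components}, each component of a $p$-core CRG is $p$-core. So let $K$ be a single non-clique component; all its vertices are black.
\item If the white graph of $K$ has an induced $P_4$, use the vector above.
\item Otherwise the white graph is a connected cograph, so its complement is disconnected. Suppose two components of that complement each contain a gray pair. Then $K$ contains an induced white $C_4$, and the vector $(1,1,-1,-1)$, with the two gray pairs given opposite signs, gives $4-12p<0$ for $p>\frac13$.
\item If not, let $U$ be the set of vertices joined by white edges to all others; $U$ is nonempty. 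Moreover $K-U$ has at least two components, since a cograph on at least two vertices and its complement are not both connected. Each component has $g\le 1-p$, so Theorem~\ref{thm:components} gives $g_{K-U}(p)\le\frac{1-p}{2}$.
\item For $z\in U$ we have $\dg(z)=0$, so Theorem~\ref{thm:graydeg}(a) gives $\x(z)=\frac{g_K(p)-p}{1-2p}>0$. Hence $p<g_K(p)<g_{K-U}(p)\le\frac{1-p}{2}$, which forces $p<\frac13$.
\end{itemize}
These cases cover every $p$ in the stated interval.
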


In fact, $g_K$ can be computed from its components.
\begin{theorem}[M.~\cite{Mar}]
    Let $K$ be a CRG with components $K_{(1)},K_{(2)},\ldots,K_{(\ell)}$. Then,
    $$ \Bigl(g_K(p)\Bigr)^{-1} = \sum_{i=1}^{\ell} \Bigl(g_{K_{(i)}}(p)\Bigr)^{-1} . $$
    \label{thm:components}
\end{theorem}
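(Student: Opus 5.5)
The plan is to exploit the block structure of ${\bf M}_K(p)$ and then solve a one-dimensional-per-block optimization with Cauchy--Schwarz. Order the vertices of $K$ so that the vertices of $K_{(1)}$ come first, then those of $K_{(2)}$, and so on. By the definition of components, every edge joining two different components is gray. So every entry of ${\bf M}_K(p)$ outside the diagonal blocks is $0$. Hence ${\bf M}_K(p)$ is block diagonal with blocks ${\bf M}_{K_{(1)}}(p),\ldots,{\bf M}_{K_{(\ell)}}(p)$. For any feasible $\x$, write $\x=(\x_1,\ldots,\x_\ell)$ according to the blocks. Then
$$\x^T{\bf M}_K(p)\x=\sum_{i=1}^{\ell}\x_i^T{\bf M}_{K_{(i)}}(p)\x_i .$$

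Next, set $t_i=\x_i^T{\bf 1}\geq 0$, so that $\sum_i t_i=1$. Whenever $t_i>0$, write $\x_i=t_i\y_i$ with $\y_i\geq{\bf 0}$ and $\y_i^T{\bf 1}=1$. Then the $i$-th term equals $t_i^2\,\y_i^T{\bf M}_{K_{(i)}}(p)\y_i$, which is at least $t_i^2 g_{K_{(i)}}(p)$. Equality is attained by taking $\y_i$ to be a minimizer for $K_{(i)}$; minimizers exist because the simplex is compact. The $\y_i$ can be chosen independently once the $t_i$ are fixed. Therefore
$$g_K(p)=\min\Bigl\{\sum_{i=1}^{\ell} t_i^2\,g_{K_{(i)}}(p) : t_i\geq 0,\ \textstyle\sum_i t_i=1\Bigr\}.$$

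It remains to evaluate this minimum; write $g_i$ for $g_{K_{(i)}}(p)$. For $p\in(0,1)$, every $g_i>0$. Indeed, all entries of ${\bf M}_{K_{(i)}}(p)$ are nonnegative and its diagonal entries are $p$ or $1-p$, so $\y^T{\bf M}\y\geq\min\{p,1-p\}\sum_j y_j^2>0$ on the simplex. Cauchy--Schwarz then gives
$$1=\Bigl(\sum_i t_i\Bigr)^2=\Bigl(\sum_i \bigl(t_i\sqrt{g_i}\bigr)\cdot\bigl(1/\sqrt{g_i}\bigr)\Bigr)^2\leq\Bigl(\sum_i t_i^2 g_i\Bigr)\Bigl(\sum_i g_i^{-1}\Bigr).$$
Equality holds for $t_i=g_i^{-1}/\sum_j g_j^{-1}$, which is feasible. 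Hence $g_K(p)=\bigl(\sum_i g_i^{-1}\bigr)^{-1}$, which is the claim.

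The only delicate point is the boundary $p\in\{0,1\}$, where some $g_i$ may vanish. There the formula should be read with the convention $0^{-1}=\infty$. Then both sides are $0$: put all weight on a component with $g_i=0$. I expect no real obstacle beyond this bookkeeping. The key observations are the block-diagonal structure forced by gray inter-component edges, and the decoupling of the minimization into the block weights $t_i$ and the within-block distributions $\y_i$.
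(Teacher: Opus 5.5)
The paper does not prove this statement; it is quoted from \cite{Mar}, so there is no in-paper argument to compare yours against. Your proof is correct and complete. Every edge between distinct components is gray, so ${\bf M}_K(p)$ is block diagonal with blocks ${\bf M}_{K_{(i)}}(p)$. The minimization then decouples exactly into minimizing $\sum_i t_i^2 g_{K_{(i)}}(p)$ over the block masses $t_i$. Cauchy--Schwarz, with equality at $t_i\propto g_{K_{(i)}}(p)^{-1}$, gives the harmonic-sum formula.

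What your route buys is generality. It holds for every CRG, not only $p$-core ones, which matters because the paper applies the statement to CRGs such as $K(r,s)$ and the two-component CRG in the gray-$C_4$ case. It also uses neither uniqueness of the optimal weight vector nor the identity ${\bf M}_K(p)\x=g_K(p){\bf 1}$ of Theorem~\ref{thm:graydeg}; that identity would also lead to the formula, but only for $p$-core $K$.

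Two small bookkeeping points. First, blocks with $t_i=0$ satisfy your lower bound trivially, since both sides are $0$. Second, at $p\in\{0,1\}$ your claim that both sides are $0$ holds only when some $g_{K_{(i)}}(p)$ vanishes. When none does, your main argument still applies verbatim, since it used only $g_{K_{(i)}}(p)>0$.
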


If a CRG has $V(G)=\VW\sqcup\VB$ and $E(G)=\EG$ with $|\VW|=r$ and $|\VB|=s$, then we denote it as $K(r,s)$. 
\begin{corollary}
    For all nonnegative integers $r$ and $s$ such that $r+s\geq 1$, we have $g_{K(r,s)}(p)=\frac{p(1-p)}{r(1-p)+sp}$.
    \label{cor:components}
\end{corollary}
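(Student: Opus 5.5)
By Theorem~\ref{thm:components}, the value $g_{K(r,s)}(p)$ is determined by the values of $g$ on the components of $K(r,s)$. Since every edge of $K(r,s)$ is gray, there are no non-gray edges at all. Hence each vertex is its own component, and $K(r,s)$ has exactly $r+s$ components. Of these, $r$ are single white vertices and $s$ are single black vertices. The condition $r+s\geq 1$ guarantees that there is at least one component, so the reciprocal sum below is nonempty.

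Next I compute $g$ for a one-vertex CRG directly from Definition~\ref{defn:fandg}. For a single white vertex, ${\bf M}(p)$ is the $1\times 1$ matrix $(p)$. The only feasible vector is $\x=(1)$, so $g=p$. Likewise, a single black vertex gives ${\bf M}(p)=(1-p)$ and $g=1-p$.

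Plugging these into Theorem~\ref{thm:components} gives
$$\bigl(g_{K(r,s)}(p)\bigr)^{-1}=\frac{r}{p}+\frac{s}{1-p}=\frac{r(1-p)+sp}{p(1-p)}.$$
Inverting yields the claimed formula.

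The only point needing care is the boundary values $p\in\{0,1\}$, where the reciprocals may be undefined. For $p\in(0,1)$ everything is positive and the computation above is valid. At $p=0$ or $p=1$ I would check the formula separately, using the convention $1/0=\infty$ in Theorem~\ref{thm:components}. For example, if $p=0$ and $r\geq1$, the white vertex contributes $g=0$, and indeed placing all weight on that vertex gives $\x^T{\bf M}\x=0$. This agrees with the formula, which gives $0/r=0$.

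If $p=0$ and $r=0$, then ${\bf M}(0)$ is the $s\times s$ identity matrix. Minimizing $\x^T\x$ over the simplex gives $1/s$, matching the formula's value $1/s$. The case $p=1$ is symmetric, with the roles of white and black vertices exchanged. Equivalently, one can argue at the endpoints by continuity of both sides in $p$, since $g_K$ is a minimum of finitely-parametrized continuous quadratic forms over a compact set. I expect no real obstacle beyond this endpoint bookkeeping.
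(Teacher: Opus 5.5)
Your proposal is correct and is exactly the derivation the paper intends: the corollary is stated immediately after Theorem~\ref{thm:components} with no separate proof, and it follows by treating each of the $r+s$ isolated vertices as its own component, with $g=p$ for a white vertex and $g=1-p$ for a black vertex, and summing the reciprocals. Your endpoint bookkeeping is also fine, with one small caveat: for $p=0,\ r=0$ (or $p=1,\ s=0$) the stated formula literally reads $0/0$. It has to be understood after cancelling the common factor $p$ (resp.\ $1-p$), which is what you implicitly do when you say it gives $1/s$.
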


Finally, we establish the main tool with which we use to compute $g_K$ for a CRG $K$. 
Recall that Definition~\ref{defn:fandg} establishes an optimal weight vector $\x$ such that $g_K(p)=\x^T {\bf M}_K(p) \x$.
If $v\in V(G)$, then we denote $\dg(v)=\sum_{vw\in\EG(K)} \x(w)$ to be the \emph{weighted gray neighborhood of $v$}.
\begin{theorem}[M.~\cite{Mar}]
    Let $p\in (0,1)$ and let $K=\bigl(\VW,\VB;\EW,\EB,\EG\bigr)$ be a $p$-core CRG with optimum weight vector $\x$.
    Then this vector is unique and it obeys the matrix equation
    $$ {\bf M}_K(p)\cdot\x = g_K(p){\bf 1} .$$
    Furthermore,
    \begin{enumerate}[(a)]
        \item   If $p\leq 1/2$, then $\x(v)=g_K(p)/p$ for all $v\in\VW$ and
        $$ \dg(v)=\frac{p-g_K(p)}{p} + \frac{1-2p}{p} \x(v), \text{ for all } v\in\VB.$$
        \item   If $p\geq 1/2$, then $\x(v)=g_K(p)/(1-p)$ for all $v\in\VB$ and
        $$ \dg(v)=\frac{1-p-g_K(p)}{1-p} + \frac{2p-1}{1-p} \x(v), \text{ for all } v\in\VW.$$ \label{it:graydeg:plarge}
    \end{enumerate}  
    \label{thm:graydeg}
\end{theorem}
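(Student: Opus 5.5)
The statement to be proved is the last one, Theorem~\ref{thm:graydeg}. I would treat it as the first-order optimality analysis of the quadratic program defining $g_K(p)$.

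\textbf{Step 1: stationarity and the matrix equation.} Let $\x$ be an optimal weight vector, so that $g_K(p)=\x^T{\bf M}_K(p)\x$ with $\x\ge{\bf 0}$ and $\x^T{\bf 1}=1$. First note that $\x(v)>0$ for every $v\in V(K)$. Otherwise $\x$ restricted to $K-v$ is feasible and achieves $g_K(p)$, so $g_{K-v}(p)\le g_K(p)$, which contradicts $K$ being $p$-core. Hence $\x$ is an interior critical point on the simplex. The Lagrange (KKT) conditions then give $2{\bf M}_K(p)\x=\lambda{\bf 1}$ for some scalar $\lambda$. Multiplying on the left by $\x^T$ yields $\lambda/2=\x^T{\bf M}_K(p)\x=g_K(p)$, so ${\bf M}_K(p)\x=g_K(p){\bf 1}$.

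\textbf{Step 2: uniqueness.} Suppose $\y$ is a second optimal vector. By Step 1, $\y$ is also strictly positive and satisfies ${\bf M}_K(p)\y=g_K(p){\bf 1}$. For every $t$ in a small interval around $0$, the vector $\x+t(\y-\x)$ lies in the simplex. Its value is
$$g_K(p)+2t(\y-\x)^T{\bf M}_K(p)\x+t^2(\y-\x)^T{\bf M}_K(p)(\y-\x)=g_K(p)+t^2 Q,$$
where $Q=(\y-\x)^T{\bf M}_K(p)(\y-\x)$. The linear term vanishes because ${\bf M}_K(p)\x=g_K(p){\bf 1}$ and $(\y-\x)^T{\bf 1}=0$. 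Since $\y$ is optimal, $Q=0$, so $g_K(p)$ is attained along the whole segment. Extend this segment until it meets the boundary of the simplex. The endpoint is an optimal vector with some zero coordinate, which contradicts Step 1. Therefore $\x=\y$.

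\textbf{Step 3: the coordinate formulas, case (b) with $p\ge1/2$.} I would use Theorem~\ref{thm:MT}.

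\begin{itemize}
\item Take $v\in\VB$. By item (a), $v$ is gray to every white vertex, and by item (d) it is gray to every other black vertex. So row $v$ of ${\bf M}_K(p)$ is zero except for the diagonal entry $1-p$. The matrix equation then reads $(1-p)\x(v)=g_K(p)$.
\item Take $v\in\VW$. Its row has diagonal entry $p$, entry $0$ on gray edges, and entry $1-p$ on black edges. By item (a) and item (e), all non-gray neighbours of $v$ are white vertices joined by black edges. Using $\sum_w\x(w)=1$, the equation becomes
$$p\x(v)+(1-p)\bigl(1-\x(v)-\dg(v)\bigr)=g_K(p).$$
Solving for $\dg(v)$ gives $\dg(v)=\frac{1-p-g_K(p)}{1-p}+\frac{2p-1}{1-p}\x(v)$.
\end{itemize}

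\textbf{Step 4: case (a) with $p\le1/2$.} This case is symmetric. For $v\in\VW$, items (a) and (b) make every off-diagonal entry of its row zero, so $p\,\x(v)=g_K(p)$. For $v\in\VB$, items (a) and (c) say its non-gray edges are white edges to black vertices, each contributing weight $p$. The equation becomes
$$(1-p)\x(v)+p\bigl(1-\x(v)-\dg(v)\bigr)=g_K(p),$$
which rearranges to the stated formula for $\dg(v)$.

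\textbf{Main obstacle.} The delicate step is uniqueness. The quadratic form $\x^T{\bf M}_K(p)\x$ need not be convex, so uniqueness cannot be read off directly. The argument instead relies on the zero second-order term along the segment together with the positivity from $p$-coreness. In particular, the argument needs every optimal vector to be positive, which is exactly what Step 1 established. Everything else is direct substitution using Theorem~\ref{thm:MT}.
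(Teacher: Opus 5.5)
Your argument is correct. Deriving strict positivity of every optimal weight from $p$-coreness, using Lagrange stationarity on the relative interior of the simplex to get ${\bf M}_K(p)\x=g_K(p){\bf 1}$, showing the objective is constant along the segment between two optimal vectors to get uniqueness, and reading off individual rows via Theorem~\ref{thm:MT} all check out, and Step~1 does not itself depend on Theorem~\ref{thm:MT}, so there is no circularity. The paper does not reprove this result but cites \cite{Mar}, and your route matches the standard argument given there.
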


\begin{corollary}
    Let $k\geq 2$ be an integer.
    \begin{enumerate}[(a)]
        \item Let $p<1/2$ and let $K$ be a $p$-core CRG on $k$ black vertices.
        If $K$ has no gray edges, then $g_K(p)=p+\frac{1-2p}{k}$. \label{it:cor:graydeg:psmall}
        \item Let $p>1/2$ and let $K$ be a $p$-core CRG on $k$ white vertices.
        If $K$ has no gray edges, then $g_K(p)=1-p+\frac{2p-1}{k}$. \label{it:cor:graydeg:plarge}
    \end{enumerate}
    \label{cor:graydeg}
\end{corollary}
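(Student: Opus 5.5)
We prove part~(\ref{it:cor:graydeg:plarge}) in detail; part~(\ref{it:cor:graydeg:psmall}) is the symmetric argument with the roles of $p$ and $1-p$ (and of white and black) exchanged. The plan is to read off the optimal weight vector from the matrix equation ${\bf M}_K(p)\x=g_K(p){\bf 1}$ of Theorem~\ref{thm:graydeg}, using that a gray-free $p$-core CRG has a very rigid matrix.

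\textbf{Step 1: pin down the edge colors.} Let $p>1/2$ and let $K$ be $p$-core on white vertices $v_1,\ldots,v_k$ with no gray edges. Theorem~\ref{thm:MT}(\ref{it:MT:plarge:vw}) says every edge between two white vertices lies in $\EB\cup\EG$. Since $\EG=\emptyset$, every edge is black. Hence ${\bf M}_K(p)$ has diagonal entries $p$ (white vertices) and off-diagonal entries $1-p$ (black edges), that is,
$$ {\bf M}_K(p)=(2p-1)I+(1-p)J . $$

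\textbf{Step 2: solve for $\x$.} By Theorem~\ref{thm:graydeg}(\ref{it:graydeg:plarge}), all vertices are white with gray degree $\dg(v_i)=0$, so
$$ 0=\frac{1-p-g_K(p)}{1-p}+\frac{2p-1}{1-p}\,\x(v_i). $$
This gives $\x(v_i)=\bigl(g_K(p)-(1-p)\bigr)/(2p-1)$, the same value for every $i$. Equivalently, using Step~1, ${\bf M}_K(p)\x=(2p-1)\x+(1-p){\bf 1}=g_K(p){\bf 1}$, so $\x$ is constant. Here we need $2p-1\neq 0$, which holds because $p>1/2$.

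\textbf{Step 3: evaluate $g_K$.} The constraint $\x^T{\bf 1}=1$ forces $\x=\tfrac1k{\bf 1}$. Then
$$ g_K(p)=(2p-1)\cdot\tfrac1k+(1-p)=1-p+\frac{2p-1}{k}. $$

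For part~(\ref{it:cor:graydeg:psmall}), take $p<1/2$. Theorem~\ref{thm:MT}(\ref{it:MT:psmall:vb}) together with $\EG=\emptyset$ makes all edges white, so ${\bf M}_K(p)=(1-2p)I+pJ$. Theorem~\ref{thm:graydeg}(a) with $\dg(v)=0$ then gives a constant $\x=\tfrac1k{\bf 1}$, and hence $g_K(p)=p+\frac{1-2p}{k}$.

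The only delicate point is Step~1: the claim that all edges are black (respectively white) relies on the $p$-core hypothesis through Theorem~\ref{thm:MT}. Once that is in place, the rest is the routine linear algebra above.
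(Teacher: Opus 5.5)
Your proof is correct and is essentially the argument the paper intends: the paper gives no separate proof and states the corollary as an immediate consequence of Theorem~\ref{thm:graydeg}, and setting $\dg(v)=0$ forces $\x=\frac{1}{k}{\bf 1}$ exactly as you do. Step~1 is not actually needed, because substituting $\x(v)=1/k$ into the gray-degree identity already gives $g_K(p)=1-p+\frac{2p-1}{k}$ (respectively $p+\frac{1-2p}{k}$) without first identifying the edge colors.
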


For this paper, we will address CRGs for which the gray edges form a spanning star.
\begin{proposition}
    Let $t\geq 1$ be an integer.
    \begin{enumerate}[(a)]
        \item Let $p<1/2$ and let $K$ be a $p$-core CRG on $t+1$ black vertices.
        If the gray edges of $K$ induce a $K_{1,t}$, then $g_K(p)=p(1-p)+\frac{(1-2p)(1-p)^2}{t+1-2p}$.
        \item Let $p>1/2$ and let $K$ be a $p$-core CRG on $t+1$ white vertices.
        If the gray edges of $K$ induce a $K_{1,t}$, then $g_K(p)=p(1-p)+\frac{(2p-1)p^2}{t+2p-1}$. \label{it:star:plarge}
    \end{enumerate}
    \label{prop:star}
\end{proposition}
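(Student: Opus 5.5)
The plan is to prove part~(\ref{it:star:plarge}) directly from Theorem~\ref{thm:graydeg}(\ref{it:graydeg:plarge}), and then to get part~(a) by the color-swapping symmetry. Swapping white and black everywhere (vertices and non-gray edges) and replacing $p$ by $1-p$ turns ${\bf M}_K(p)$ into ${\bf M}_{\overline{K}}(1-p)$. So $g$ transforms accordingly, and the formula in (a) is the formula in (b) with $p\mapsto 1-p$.

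For (b), let $p>1/2$ and let $K$ be $p$-core on $t+1$ white vertices. By Theorem~\ref{thm:MT}(\ref{it:MT:plarge:vw}), every white--white edge is black or gray. Since the gray edges form a star $K_{1,t}$ with center $c$ and leaves $\ell_1,\ldots,\ell_t$, every edge between two leaves is black. Thus $K$ is determined up to isomorphism, and every permutation of the leaves is an automorphism of $K$. By Theorem~\ref{thm:graydeg} the optimal weight vector $\x$ is unique, so it is invariant under these automorphisms. Hence $\x(c)=a$ and $\x(\ell_i)=b$ for all $i$, with $a+tb=1$.

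Next, apply the gray-degree identity of Theorem~\ref{thm:graydeg}(\ref{it:graydeg:plarge}) to every white vertex, writing $g=g_K(p)$.
\begin{itemize}
\item At the center, $\dg(c)=tb=1-a$. This gives $(1-p)(1-a)=1-p-g+(2p-1)a$, which simplifies to $g=pa$.
\item At a leaf, $\dg(\ell_i)=a$. This gives $(1-p)a=1-p-g+(2p-1)b$. Substituting $g=pa$ and $b=(1-a)/t$ yields $a(t+2p-1)=t(1-p)+2p-1$.
\end{itemize}
Therefore
$$ g=\frac{p\bigl(t(1-p)+2p-1\bigr)}{t+2p-1}. $$
Rewriting the numerator as $(1-p)(t+2p-1)+(2p-1)p$ gives exactly $p(1-p)+\frac{(2p-1)p^2}{t+2p-1}$.

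The step I expect to need the most care is the reduction to two unknowns $a,b$: it relies on uniqueness of the optimal vector for $p$-core CRGs, as stated in Theorem~\ref{thm:graydeg}. For $t=1$ the reduction is immediate, since there is only one leaf. A sanity check is also worthwhile: $0<a<1$ and $b>0$ hold for $p\in(1/2,1)$, consistent with $\x\ge{\bf 0}$. Alternatively, one can verify ${\bf M}_K(p)\x=g{\bf 1}$ directly. The center row reads $pa+0\cdot tb=pa$. A leaf row reads $0\cdot a+pb+(1-p)(t-1)b$, and this equals $pa$ by the leaf equation above.
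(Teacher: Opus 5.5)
Your proof is correct and is essentially the paper's argument. Both apply Theorem~\ref{thm:graydeg}(\ref{it:graydeg:plarge}) at the center and at a leaf, solve the two resulting linear equations for $g_K(p)$, and get (a) from (b) via the photographic negative $g_{\overline{K}}(p)=g_K(1-p)$. The only cosmetic difference is that the paper gets equal leaf weights directly from the leaf equation rather than from automorphisms plus uniqueness: $\dg(\ell_i)$ equals the center weight for every leaf, so the formula forces every $\x(\ell_i)$ to be the same.

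The paper also checks that the gray-star CRG really is $p$-core for $p>1/2$. This is not needed for the statement as written, since $p$-coreness is a hypothesis, but it is what later justifies applying the formula to sub-CRGs such as the gray $P_3$ in Section~\ref{sec:proof:word}.
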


\begin{proof}
    Because we will use \eqref{it:star:plarge} later in the paper, we will prove that part. 
    For the other part, we make the observation that if CRG $\overline{K}$ is the photographic negative of $K$ (i.e., black vertices and edges are made white, and white vertices and edges are made black) then $g_{\overline{K}}(p)=g_K(1-p)$.

    In order to prove \eqref{it:star:plarge}, for convenience let $g=g_K(p)$, $G=\frac{1-p-g}{1-p}$, and $P=\frac{2p-1}{1-p}$. 
    If $K$ is $p$-core, and $c$ denotes the weight of the center of the star, then $\dg(w)=c$ for each leaf $w$ and so $\x(w)=(1-c)/t$. As a result, the following hold from Theorem~\ref{thm:graydeg}\eqref{it:graydeg:plarge}: $G+Pc=1-c$ and $G+P\frac{1-c}{t}=c$. 
    Thus,
    \begin{align*}
        c   =   \frac{1-G}{1+P}     &=  \frac{tG+P}{t+P} \\
                G                   &=  \frac{t-P^2}{tP+2t+P} \\
                \frac{1-p-g}{1-p}   &=  \frac{t(1-p)^2-(2p-1)^2}{(1-p)\bigl(t(2p-1)+2t(1-p)+(2p-1)\bigr)} \\
                g                   &=  \frac{tp(1-p)+p(2p-1)}{t+2p-1} = p(1-p)+\frac{(2p-1)p^2}{t+2p-1} .
    \end{align*}
    Now we have to verify that $K$ is $p$-core for $p>1/2$ by showing that $g_{K'}(p)>g_K(p)$ for all sub-CRGs $K'$. 
    Since the expression increases as $t$ decreases, deleting leaves will result in a sub-CRG that has a larger $g$ function. 
    If, on the other hand, $K'$ comes from deleting the center as well as $t-t'\geq 0$ leaves, then by Corollary~\ref{cor:graydeg}, the resulting $g_{K'}(p)$ will be $1-p+\frac{2p-1}{t'}$.
    However,
    \begin{align*} 
        g_K(p)-g_{K'}(p)    
        &=  \biggl[p(1-p)+\frac{(2p-1)p^2}{t+2p-1}\biggr] - \biggl[1-p+\frac{2p-1}{t'}\biggr] \\
        &=  -(1-p)^2+(2p-1)\biggl[\frac{p^2}{t+2p-1}-\frac{1}{t'}\biggr] \\
        &\leq  -(1-p)^2+(2p-1)\biggl[\frac{p^2}{t+2p-1}-\frac{1}{t}\biggr] \\
        &=  \frac{-(1-p)^2t(t+2p-1)+(2p-1)t(1-p^2)-(2p-1)^2}{t(t+2p-1)} \\
        &=  \frac{-(1-p)^2t^2-2p(1-p)(2p-1)t-(2p-1)^2}{t(t+2p-1)} < 0.
    \end{align*}
    And this establishes that $K$ is $p$-core for $p>1/2$. 
\end{proof}

\section{Proof of Theorem~\ref{thm:word}}
\label{sec:proof:word}

\begin{figure}
    \centering
    \ \hfill \
    \begin{subfigure}[b]{0.30\textwidth}
        \centering
        \begin{tikzpicture}[scale=1]
            \centering
            \useasboundingbox (-1.6,-1.25) rectangle (1.6,1.25);
            \coordinate (offset) at (0,-0.4);
            \coordinate (c) at ($(0,0)+(offset)$);
            \foreach \i in {1,2,3}{
                \coordinate (v\i) at ($(-30+120*\i:1.45)+(offset)$);
                \coordinate (w\i) at ($(-55+120*\i:0.60)+(offset)$);
            }
            \begin{pgfonlayer}{fore}
                \draw (c) node[vtx] {};
                \foreach \i in {1,2,3}{
                    \draw (v\i) node[vtx] {};
                    \draw (w\i) node[vtx] {};
                }
            \end{pgfonlayer}
            \begin{pgfonlayer}{main}
                \foreach \i in {1,2}{
                    \draw[edg] (c) -- (v\i);
                    \draw[edg] (c) -- (w\i);
                    \draw[edg] (v\i) -- (w\i);
                    \pgfmathtruncatemacro{\j}{\i+1};
                    \draw[edg] (v\i) -- (v\j);
                }
                \draw[edg] (c) -- (v3);
                \draw[edg] (c) -- (w3);
                \draw[edg] (v3) -- (w3);
                \draw[edg] (v1) -- (v3);
            \end{pgfonlayer}
        \end{tikzpicture}
        \caption{The graph $F_1$}
        \label{fig:F1-aaa}
    \end{subfigure}\ \hfill \
    \begin{subfigure}[b]{0.30\textwidth}
        \centering
        \begin{tikzpicture}[scale=1]
            \centering
            \useasboundingbox (-1.6,-1.25) rectangle (1.6,1.25);
            \coordinate (offset) at (0,-0.4);
            \coordinate (c) at ($(0,0)+(offset)$);
            \foreach \i in {1,2,3}{
                \coordinate (v\i) at ($(-30+120*\i:1.45)+(offset)$);
                \coordinate (w\i) at ($(-55+120*\i:0.60)+(offset)$);
            }
            \begin{pgfonlayer}{fore}
                \draw (c) node[vtx] {};
                \foreach \i in {1,2,3}{
                    \draw (v\i) node[vtx] {};
                    \draw (w\i) node[vtx] {};
                }
            \end{pgfonlayer}
            \begin{pgfonlayer}{main}
                \foreach \i in {1,2}{
                    \draw[edg] (c) -- (v\i);
                    \draw[edg] (c) -- (w\i);
                    \draw[edg] (v\i) -- (w\i);
                    \pgfmathtruncatemacro{\j}{\i+1};
                    \draw[edg] (v\i) -- (v\j);
                    \draw[edg] (w\i) -- (w\j);
                }
                \draw[edg] (c) -- (v3);
                \draw[edg] (c) -- (w3);
                \draw[edg] (v3) -- (w3);
                \draw[edg] (v1) -- (v3);
                \draw[edg] (w1) -- (w3);
            \end{pgfonlayer}
        \end{tikzpicture}
        \caption{The graph $F_2$}
        \label{fig:F2}
    \end{subfigure} \ \hfill \
    \begin{subfigure}[b]{0.30\textwidth}
        \centering
        \begin{tikzpicture}[scale=1]
            \centering
            \useasboundingbox (-1.6,-1.25) rectangle (1.6,1.25);
            \coordinate (offset) at (0,-0.15);
            \coordinate (c) at ($(0,0)+(offset)$);
            \foreach \i in {1,2,3,4,5}{
                \coordinate (v\i) at ($(18+72*\i:1.20)+(offset)$);
            }
            \begin{pgfonlayer}{fore}
                \draw (c) node[vtx] {};
                \foreach \i in {1,2,3,4,5}{
                    \draw (v\i) node[vtx] {};
                }
            \end{pgfonlayer}
            \begin{pgfonlayer}{main}
                \foreach \i in {1,2,3,4}{
                    \draw[edg] (c) -- (v\i);
                    \pgfmathtruncatemacro{\j}{\i+1};
                    \draw[edg] (v\i) -- (v\j);
                }
                \draw[edg] (c) -- (v5);
                \draw[edg] (v1) -- (v5);
            \end{pgfonlayer}
        \end{tikzpicture} 
        \caption{The wheel $W_5$} 
    \end{subfigure} \ \hfill \
	\caption{Examples of non-word-representable graphs.}
    \label{fig:forb:word}
\end{figure}
        
\begin{theorem}[Halld\'orsson-K.-Pyatkin~\cite{HKP}]
    If $G$ is $3$-colorable or $G$ is a clique, then $G$ is word-representable, that is, $G\in\hword$. 
    \label{thm:HKP}
\end{theorem}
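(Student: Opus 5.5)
The theorem has two parts, and I would prove them separately.

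\emph{Cliques.} A clique $K_n$ on vertex set $\{v_1,\ldots,v_n\}$ is represented by the single permutation $v_1v_2\cdots v_n$. Each letter occurs exactly once, so any two distinct letters trivially alternate. Hence $K_n\in\hword$, and in fact $K_n$ is $1$-word-representable.

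\emph{3-colorable graphs.} I would use the semi-transitive orientation characterization of word-representability: a graph is word-representable if and only if it has an acyclic orientation with no shortcut. Here a shortcut is a directed path $u_0\to u_1\to\cdots\to u_t$ with $t\geq 3$ such that $u_0\to u_t$ is an edge but some $u_iu_j$ with $0\le i<j\le t$ is a non-edge. The plan has three steps.
\begin{enumerate}[(1)]
\item Take a proper coloring $c:V(G)\to\{1,2,3\}$.
\item Orient every edge from the smaller color to the larger color. This orientation is acyclic, because colors strictly increase along every directed edge.
\item Observe that every directed path uses strictly increasing colors. So it has at most $3$ vertices, i.e.\ at most $2$ edges. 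A shortcut needs a directed path with at least $3$ edges, so none exists.
\end{enumerate}
The orientation is therefore semi-transitive, and $G\in\hword$.

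\emph{Main obstacle.} The one nontrivial ingredient is the equivalence between word-representability and semi-transitive orientability, due to Halld\'orsson, Kitaev and Pyatkin. If that result may not be cited, the hard step is the direction ``semi-transitive orientation $\Rightarrow$ word-representable''. For that I would build the word from a topological ordering. Start from a permutation that is a linear extension of the orientation. Then repeatedly append permutations, locally rearranged, so that each non-edge pair fails to alternate while each edge pair keeps alternating.

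In the 3-colorable case this construction can be made explicit without the full machinery, since the longest directed path has only two edges. Concretely, I would try to build a word from copies of the three color-class permutations, reordered to break alternation exactly on the non-edges. Checking that such an explicit word represents $G$ correctly is where I expect the real work to lie. Citing the theorem from~\cite{HKP} avoids it entirely.
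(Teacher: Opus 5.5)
Your proof is correct: a single permutation represents a clique, and orienting edges by increasing color gives an acyclic orientation whose directed paths have at most three vertices, so no shortcut exists and the semi-transitive characterization (which the paper quotes in its final remarks) gives word-representability. The paper gives no proof of this statement beyond the citation to \cite{HKP}, and your argument is essentially the standard one from that source.
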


From Theorem~\ref{thm:HKP}, we immediately get the upper bound for $\dist\bigl(n,\hword\bigr)$.
\begin{lemma}
    For any graph $G$,
    \begin{align*}
        \dist\bigl(G,\hword\bigr)\leq\min\Bigl\{|E(G)|/3,\tnct-|E(G)|\Bigr\} .
    \end{align*}
    Hence, $\dist\bigl(n,\hword\bigr)\leq \frac{1}{4}\tnct$.
\end{lemma}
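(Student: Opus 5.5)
The plan is to exhibit two word-representable graphs on $V(G)$ that are close to $G$, using Theorem~\ref{thm:HKP}. Note that the displayed inequality is stated in terms of raw edge counts. I read it as bounding the number of edits (equivalently, $\tnct\cdot\dist(G,\hword)$), since that normalization is what makes the final $\tfrac{1}{4}\tnct$ statement consistent.

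\emph{First candidate: the clique.} Add all missing edges to obtain $K_n$, which is word-representable by Theorem~\ref{thm:HKP}. This uses exactly $\tnct-|E(G)|$ edits.

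\emph{Second candidate: a $3$-colorable subgraph.} Take a uniformly random partition of $V(G)$ into three classes $V_1,V_2,V_3$, assigning each vertex independently and uniformly. Delete every edge of $G$ that lies inside some $V_i$. Each edge is deleted with probability $1/3$, so the expected number of deletions is $|E(G)|/3$. Hence some partition deletes at most $|E(G)|/3$ edges, and the resulting graph is $3$-colorable, so it lies in $\hword$ by Theorem~\ref{thm:HKP}. A deterministic alternative is the standard local-switching argument for a max-$3$-cut: move any vertex that has more neighbours in its own class than in some other class. At a local optimum, each vertex $v$ has at most $\deg(v)/3$ neighbours in its own class, and summing over vertices gives the same bound. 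Taking the better of the two candidates gives the minimum in the displayed inequality.

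\emph{The consequence.} Write $m=|E(G)|$. The quantity $\min\{m/3,\tnct-m\}$ is maximized when the two terms are equal, that is, at $m=\tfrac{3}{4}\tnct$, where its value is $\tfrac{1}{4}\tnct$. So every $n$-vertex graph can be made word-representable with at most $\tfrac{1}{4}\tnct$ edits, which is the claimed bound on $\dist\bigl(n,\hword\bigr)$ in edge-count form. I expect no real obstacle here. The only points needing care are checking that the averaging argument really yields the integer bound $\lfloor m/3\rfloor$ (it does, since the number of deletions is an integer and its expectation is $m/3$) and keeping the normalization by $\tnct$ consistent.
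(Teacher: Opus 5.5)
Your proof is correct and follows the paper's argument: either complete $G$ to $K_n$ at a cost of $\tnct-|E(G)|$ edits, or randomly $3$-color $V(G)$ and delete the monochromatic edges at an expected cost of $|E(G)|/3$, with both resulting graphs word-representable by Theorem~\ref{thm:HKP}. Your reading of the inequality as a count of edits (that is, a bound on $\tnct\cdot\dist(G,\hword)$) is the intended one, and your local-switching alternative and the balancing of the two bounds at $|E(G)|=\tfrac{3}{4}\tnct$ are both correct.
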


In addition if $\dist\bigl(G,\hkword{k}\bigr)\geq\bigl(\frac{1}{4}-\epsilon\bigr)\tnct$, then $\min\bigl\{|E(G)|/3,\tnct-|E(G)|\bigr\}\geq\bigl(\frac{1}{4}-\epsilon\bigr)\tnct$ and so $\bigl|E(G)\bigr|/\tnct\in \bigl[\tfrac{3}{4}-3\epsilon,\tfrac{3}{4}+\epsilon\bigr]$.

\begin{proof}
    To prove the upper bound for Theorem~\ref{thm:kword}, consider two possible operations on the edge set of $G$:
    \begin{enumerate}[(1)]
        \item Color each of the vertices of $G$ in three distinct colors. 
        If the endvertices of an edge have the same color, delete the edge. 
        Otherwise, do nothing. \label{it:recipe1}
        \item Add every nonedge to make a complete graph. \label{it:recipe2}
    \end{enumerate}

    In case~\eqref{it:recipe1}, we can color each of the vertices independently and uniformly at random with equal probability $1/3$. 
    The expected number of deleted edges is $|E(G)|/3$. 
    In case~\eqref{it:recipe2}, the number of added edges is $\binom{n}{2}-|E(G)|$. 
    Hence, there is a way of editing $G$ with at most $\min\bigl\{|E(G)|/3,\tnct-|E(G)|\bigr\}$ changed edges.
\end{proof}

As far as the lower bound for Theorem~\ref{thm:kword}, it is our goal to show that $\ed_{\hword}(p)\geq\min\bigl\{p/3,1-p\bigr\}$.

Although there are many minimal non-word-representable graphs, we will only require $3$ of them, as established by Pyatkin and the first author.
\begin{theorem}[K.-Pyatkin~\cite{KP}]
    The graphs $F_1$, $F_2$, and $W_5$ (the wheel on $6$ vertices) which are depicted in Figure~\ref{fig:forb:word} are not word-representable.
    Hence, $\hword\subset\forb\bigl\{F_1,F_2,W_5\bigr\}$.
    \label{thm:KP}
\end{theorem}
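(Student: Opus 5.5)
We plan to use the fact recalled in the introduction from~\cite{KP}: if a graph is word-representable, then the neighbourhood of every vertex induces a comparability graph. (The reason is that, after a cyclic shift, we may assume the word starts with that vertex $x$. The neighbours of $x$ then occur exactly once between consecutive copies of $x$. This induces a sequence of permutations of the neighbourhood, and alternation in it defines a transitive orientation.) So for each of $F_1$, $F_2$ and $W_5$ it suffices to exhibit a vertex, namely the central vertex $c$ (the hub in $W_5$), whose neighbourhood $N(c)$ is not a comparability graph. In all three figures $c$ is adjacent to every other vertex, so $N(c)$ is the graph obtained by deleting $c$.

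For $W_5$, $N(c)=C_5$. We will argue that no odd cycle of length at least $5$ has a transitive orientation. In a transitive orientation of a triangle-free graph, every vertex must be a source or a sink, since a directed path $a\to b\to d$ would force the edge $ad$. Hence sources and sinks alternate around the cycle, which is impossible for odd length.

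For $F_1$, $N(c)$ is the ``net'': a triangle $v_1v_2v_3$ with a pendant edge $v_iw_i$ at each $v_i$. We will use the forcing rule: if $ab,bd\in E$ and $ad\notin E$, then $b$ must be a source or a sink for these two edges. Orient $w_1\to v_1$. The induced paths $w_1v_1v_2$ and $w_1v_1v_3$ force $v_2\to v_1$ and $v_3\to v_1$. The path $w_2v_2v_1$ then forces $v_2$ to be a source, so $v_2\to v_3$. Symmetrically, the path $w_3v_3v_1$ forces $v_3$ to be a source, so $v_3\to v_2$. This is a contradiction, and the case $v_1\to w_1$ follows by reversing all arcs.

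For $F_2$, $N(c)$ is the triangular prism, consisting of triangles $v_1v_2v_3$ and $w_1w_2w_3$ joined by the matching $v_iw_i$. We run the same forcing argument. The induced paths $v_jv_iw_i$ and $w_jw_iv_i$ (for $j\neq i$) propagate the orientation of $v_1w_1$ around both triangles, and we expect to reach a contradiction analogous to the net case. Alternatively, we can invoke the known fact that the complement $C_6$ of the prism is not a cocomparability graph. Checking this last case carefully is the only step requiring real bookkeeping, and we expect it to be the main, if minor, obstacle. Since a graph in $\hword$ cannot contain any of these graphs as an induced subgraph, it follows that $\hword\subset\forb\{F_1,F_2,W_5\}$.
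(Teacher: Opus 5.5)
Your proposal is correct. It is essentially the argument of \cite{KP}, which the paper relies on: the paper gives no proof of its own, only the citation and the remark in the introduction that neighbourhoods in word-representable graphs are comparability graphs. The hubs of $W_5$, $F_1$ and $F_2$ have neighbourhoods $C_5$, the net and the prism, none of which is transitively orientable.

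Two small points:
\begin{enumerate}
\item The cyclic-shift step is only valid for a uniform representation. Such a representation exists by \cite{KP}, so you should state that you start from one.
\item For $F_2$, your net argument goes through verbatim. It uses only the induced paths $w_iv_iv_j$ with $i\neq j$, and these remain induced in the prism. So the appeal to $C_6$ not being a cocomparability graph is unnecessary; it merely restates the claim.
\end{enumerate}
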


With that information, we have Proposition~\ref{prop:word:structure}. 
Recall that $K(r,s)$ denotes the CRG with $r$ white vertices and $s$ black vertices and all edges gray.
\begin{proposition}
    The following embeddings exist:
    \begin{enumerate}[(a)]
        \item $F_1\mapsto K(1,1)$. \label{it:word:structure:k11}
        \item $F_2\mapsto K(0,2)$. \label{it:word:structure:k02}
        \item $W_5\mapsto K(4,0)$. \label{it:word:structure:k40}
        \item If $K$ has at least $5$ vertices all of which are white, where each edge is either black or gray and such that the gray edges have a $K_{1,3}$ as a subgraph, then $F_1\mapsto K$. \label{it:word:structure:claw}
        \item If $t\in\{3,4,5\}$ and $K$ has at least $t+1$ vertices all of which are white, where each edge is either black or gray and such that the gray edges have a $C_t$ as a subgraph, then $W_5\mapsto K$. \label{it:word:structure:cycle}
    \end{enumerate}
    \label{prop:word:structure}
\end{proposition}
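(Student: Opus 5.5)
The plan is to verify each item by writing down an explicit colored homomorphism $\varphi$. It helps to restate the definition for the CRGs involved. A white vertex of $K$ may receive only an independent set of $G$, and a black vertex only a clique. A black edge between two vertices of $K$ forces their preimages to be completely joined in $G$. A gray edge imposes no condition. Throughout, label $F_1$ and $F_2$ as in Figure~\ref{fig:forb:word}: the center $c$ is adjacent to everything, $v_1v_2v_3$ is a triangle, and $v_iw_i$ are edges. The $w_i$ are independent in $F_1$ and form a triangle in $F_2$. Label $W_5$ with hub $h$ and rim cycle $x_1x_2x_3x_4x_5$.

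For parts (a)--(c) all edges of $K$ are gray, so only the vertex colors constrain $\varphi$.
\begin{enumerate}[(a)]
    \item $F_1$ is a split graph: map the clique $\{c,v_1,v_2,v_3\}$ to the black vertex and the independent set $\{w_1,w_2,w_3\}$ to the white vertex.
    \item $V(F_2)$ is covered by the two cliques $\{c,v_1,v_2,v_3\}$ and $\{w_1,w_2,w_3\}$; send them to the two black vertices.
    \item A proper $4$-coloring of $W_5$ (three colors on the rim $C_5$, a fourth on $h$) gives $W_5\mapsto K(4,0)$.
\end{enumerate}

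For (d) and (e), all vertices of $K$ are white, so every color class must be independent. Any pair of vertices not joined by a guaranteed gray edge might be black, so their preimages must be completely joined. A vertex of $F$ adjacent to all other vertices can therefore be placed alone on a spare vertex of $K$, since either edge color is then satisfied.

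For (d), let $a$ be the center of the gray $K_{1,3}$, let $b_1,b_2,b_3$ be its leaves, and let $e$ be a fifth vertex. Set $\varphi(c)=e$, $\varphi(w_i)=a$ for all $i$, and $\varphi(v_i)=b_i$. Each class is independent. The pairs $ab_i$ are gray. The pairs $b_ib_j$ carry $v_iv_j$, which are edges. All pairs involving $e$ carry only edges at $c$.

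For (e), put $h$ on the extra vertex, so it remains to map the rim $C_5$ onto the gray cycle $u_1\cdots u_t$. Consecutive classes are unconstrained, while non-consecutive classes must be completely joined.
\begin{itemize}
    \item $t=3$: all pairs are gray, so a proper $3$-coloring of $C_5$ works.
    \item $t=4$: take classes $u_1=\{x_1\}$, $u_2=\{x_3,x_5\}$, $u_3=\{x_2\}$, $u_4=\{x_4\}$. The class $\{x_3,x_5\}$ is independent. The only non-consecutive pairs are $u_1u_3$, which carries $x_1x_2$, and $u_2u_4$, which carries $x_3x_4$ and $x_5x_4$; all of these are edges.
    \item $t=5$: use the pentagram map $x_1\mapsto u_1$, $x_2\mapsto u_3$, $x_3\mapsto u_5$, $x_4\mapsto u_2$, $x_5\mapsto u_4$. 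Each non-consecutive pair $u_iu_{i+2}$ then receives an edge of the rim, so possible black chords are harmless.
\end{itemize}

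The main obstacle is (e) for $t=5$. The naive identity map fails because non-consecutive rim vertices of $C_5$ are non-adjacent, which would clash with black chords of $K$. The pentagram relabeling above is what fixes this.
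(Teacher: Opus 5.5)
Your proposal is correct and follows the paper's proof: the same split, clique-cover and $4$-coloring partitions for (a)--(c); the $w_i$ on the star center, the $v_i$ on the leaves and the universal vertex on a spare vertex for (d); and for (e) the hub on the extra vertex, with the rim classes placed so that every possibly-black chord carries only rim edges. Your $t=4$ map is the paper's up to relabeling. Your explicit pentagram map for $t=5$ spells out a case that the paper's proof of this proposition leaves implicit (its figure shows only $t\in\{3,4\}$); it is the same device the paper describes for the comparability case, where the nonedges of the graph are sent to the gray cycle.
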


\begin{proof}
    For \eqref{it:word:structure:k11}, we observe that $F_1$ is a split graph. 
    That is, $F_1$ can be partitioned into two parts, one is an independent set and one is a clique.
    The independent set maps to the white vertex and the clique maps to the black vertex. 

    For \eqref{it:word:structure:k02}, we observe that $\chi\bigl(\overline{F_2}\bigr)=2$.
    Hence, $F_2$ can be partitioned into two parts, each of which is a clique.
    Each of the two cliques maps to a different black vertex.

    For \eqref{it:word:structure:k40}, we observe that $\chi\bigl(W_5)=4$. 
    Hence, $W_5$ can be partitioned into four parts, each of which is an independent set.
    Each of the four independent sets maps to a different white vertex.

    For \eqref{it:word:structure:claw}, we demonstrate the mapping in Figure~\ref{fig:F1toK13}.
    The graph $F_1$ is on $7$ vertices and has a degree-$6$ vertex. 
    The vertices of the independent set of size $3$, consisting of the degree-$2$ vertices, are all mapped to the center of the gray $K_{1,3}$. 
    The remaining degree-$4$ vertices are mapped to different leaves of the gray $K_{1,3}$. 
    The degree-$6$ vertex can be mapped to any additional fifth CRG-vertex because it is adjacent to every other vertex and the edges from the fifth CRG-vertex can be either gray or black.
    
    \begin{figure}
        \centering
        \ \hfill \
        \begin{subfigure}[b]{0.30\textwidth}
            \centering
            \begin{tikzpicture}[scale=1]
                \centering
                \useasboundingbox (-1.6,-1.25) rectangle (1.6,1.25);
                \coordinate (offset) at (0,-0.4);
                \coordinate (c) at ($(0,0)+(offset)$);
                \foreach \i in {1,2,3}{
                    \coordinate (v\i) at ($(-30+120*\i:1.45)+(offset)$);
                    \coordinate (w\i) at ($(-55+120*\i:0.60)+(offset)$);
                }
                \begin{pgfonlayer}{fore}
                    \draw (c) node[vtx,ultra thick,black,fill=white] {};
                    \draw (v1) node[vtx,label={[label distance=-1pt]{0}:$2$}] {};
                    \draw (v2) node[vtx,label={[label distance=-2pt]{95}:$3$}] {};
                    \draw (v3) node[vtx,label={[label distance=-2pt]{85}:$4$}] {};
                    \draw (w1) node[vtx,label={[label distance=-3pt]{-60}:$1$}] {};
                    \draw (w2) node[vtx,label={[label distance=-3pt]{85}:$1$}] {};
                    \draw (w3) node[vtx,label={[label distance=-3pt]{180}:$1$}] {};
                \end{pgfonlayer}
                \begin{pgfonlayer}{main}
                    \foreach \i in {1,2}{
                        \draw[edg] (c) -- (v\i);
                        \draw[edg] (c) -- (w\i);
                        \draw[edg] (v\i) -- (w\i);
                        \pgfmathtruncatemacro{\j}{\i+1};
                        \draw[edg] (v\i) -- (v\j);
                    }
                    \draw[edg] (c) -- (v3);
                    \draw[edg] (c) -- (w3);
                    \draw[edg] (v3) -- (w3);
                    \draw[edg] (v1) -- (v3);
                \end{pgfonlayer}
            \end{tikzpicture}
            \caption*{The graph $F_1$}
            \label{fig:F1}
        \end{subfigure}\ \hfill \
        \begin{subfigure}[b]{0.65\textwidth}
            \centering
            \begin{tikzpicture}[scale=1]
                \centering
                \useasboundingbox (-1.6,-1.25) rectangle (1.6,1.25);
                \coordinate (offset) at (0,-0.4);
                \coordinate (c) at ($(0,-0.50)+(offset)$);
                \coordinate (v2) at ($(-1.25,1.30)+(offset)$);
                \coordinate (v3) at ($(0,0.65)+(offset)$);
                \coordinate (v4) at ($(1.25,1.30)+(offset)$);
                \begin{pgfonlayer}{forefore}
                    \draw (c) node {$1$};
                    \draw (v2) node {$2$};
                    \draw (v3) node {$3$};
                    \draw (v4) node {$4$};
                \end{pgfonlayer}
                \begin{pgfonlayer}{fore}
                    \draw[thick, black, fill=white] (c) circle (10pt);
                    \foreach \i in {2,3,4}{
                        \draw[thick, black, fill=white] (v\i) circle (10pt);
                    }
                \end{pgfonlayer}
                \def\wdt{0.09};
                \begin{pgfonlayer}{main}
                    \draw[ultra thin, white, fill=gray!40] ($(c)+(-0.88*\wdt,-0.44*\wdt)$) -- ($(c)+(0.88*\wdt,0.44*\wdt)$) -- ($(v2)+(0.88*\wdt,0.44*\wdt)$) -- ($(v2)+(-0.88*\wdt,-0.44*\wdt)$) -- cycle;
                    \draw[ultra thin, white, fill=gray!40] ($(c)+(-0.88*\wdt,0.44*\wdt)$) -- ($(c)+(0.88*\wdt,-0.44*\wdt)$) -- ($(v4)+(0.88*\wdt,-0.44*\wdt)$) -- ($(v4)+(-0.88*\wdt,0.44*\wdt)$) -- cycle;                
                    \draw[ultra thin, white, fill=gray!40] ($(c)+(-\wdt,0)$) -- ($(c)+(\wdt,0)$) -- ($(v3)+(\wdt,0)$) -- ($(v3)+(-\wdt,0)$) -- cycle;
                \end{pgfonlayer}
                \begin{pgfonlayer}{back}
                    \draw[ultra thin, white, fill=black] ($(v3)+(-0.44*\wdt,-0.88*\wdt)$) -- ($(v3)+(0.44*\wdt,0.88*\wdt)$) -- ($(v2)+(0.44*\wdt,0.88*\wdt)$) -- ($(v2)+(-0.44*\wdt,-0.88*\wdt)$) -- cycle;
                    \draw[ultra thin, white, fill=black] ($(v3)+(-0.44*\wdt,0.88*\wdt)$) -- ($(v3)+(0.44*\wdt,-0.88*\wdt)$) -- ($(v4)+(0.44*\wdt,-0.88*\wdt)$) -- ($(v4)+(-0.44*\wdt,0.88*\wdt)$) -- cycle;
                    \draw[ultra thin, white, fill=black] ($(v2)+(0,-\wdt)$) -- ($(v2)+(0,\wdt)$) -- ($(v4)+(0,\wdt)$) -- ($(v4)+(0,-\wdt)$) -- cycle;
                \end{pgfonlayer}
            \end{tikzpicture}
            \caption*{A $p$-core CRG, $p>1/2$ with a gray $K_{1,3}$}
            \label{fig:F2-aa}
        \end{subfigure} \ \hfill \
    	\caption{The mapping from $F_1$ to a CRG with a gray $K_{1,3}$. We may assume that all other edges of the CRG are black, otherwise the embedding is less restrictive. The central degree-$6$ vertex of $F_1$ can be placed in any additional vertex.}
        \label{fig:F1toK13}
    \end{figure}    

    Finally, for \eqref{it:word:structure:cycle}, we demonstrate the mapping in Figure~\ref{fig:W5toC3C4} in the cases of $t\in\{3,4\}$.
    The graph $W_5$ is on $6$ vertices and has a degree-$5$ vertex. 
    The vertices of the complement of the cycle are mapped as independent sets to the vertices of $C_t$. 
    The degree-$5$ vertex can be mapped to any additional CRG-vertex because it is adjacent to every other vertex and the edges from the CRG-vertex can be either gray or black.
\end{proof}

    \begin{figure}
        \centering
        \ \hfill \
        \begin{subfigure}[b]{0.30\textwidth}
            \centering
            \begin{tikzpicture}[scale=1]
                \centering
                \useasboundingbox (-1.6,-1.25) rectangle (1.6,1.25);
                \coordinate (offset) at (0,-0.15);
                \coordinate (c) at ($(0,0)+(offset)$);
                \foreach \i in {1,2,3,4,5}{
                    \coordinate (v\i) at ($(18+72*\i:1.20)+(offset)$);
                }
                \begin{pgfonlayer}{fore}
                    \draw (c) node[vtx,ultra thick,black,fill=white] {};
                    \draw (v1) node[vtx,label={[label distance=-1pt]{0}:$1$}] {};
                    \draw (v2) node[vtx,label={[label distance=0pt]{90}:$2$}] {};
                    \draw (v3) node[vtx,label={[label distance=-1pt]{95}:$3$}] {};
                    \draw (v4) node[vtx,label={[label distance=-1pt]{85}:$4$}] {};
                    \draw (v5) node[vtx,label={[label distance=0pt]{90}:$5$}] {};
            \end{pgfonlayer}
            \begin{pgfonlayer}{main}
                \foreach \i in {1,2,3,4,5}{
                    \draw[edg] (c) -- (v\i);
                }
                \draw[edg] (v1) -- (v3);
                \draw[edg] (v2) -- (v4);
                \draw[edg] (v3) -- (v5);
                \draw[edg] (v4) -- (v1);
                \draw[edg] (v5) -- (v2);
            \end{pgfonlayer}
        \end{tikzpicture} 
        \caption*{The wheel $W_5$} 
    \end{subfigure} \ \hfill \        
        \begin{subfigure}[b]{0.65\textwidth}
            \centering
            \begin{tikzpicture}[scale=1]
                \centering
                \useasboundingbox (-3.2,-1.25) rectangle (3.2,1.25);
                \coordinate (c3offset) at (-1.8,+0.3);
                \coordinate (c4offset) at (+1.95,+0.0);
                \foreach \i in {1,2,3}{
                    \coordinate (v\i) at ($(30+120*\i:1.2)+(c3offset)$);
                }
                \foreach \j in {1,2,3,4}{
                    \coordinate (w\j) at ($(45+90*\j:1.25)+(c4offset)$);
                }                
                \begin{pgfonlayer}{forefore}
                    \draw (v1) node {$12$};
                    \draw (v2) node {$34$};
                    \draw (v3) node {$5$};
                    \draw (w1) node {$12$};
                    \draw (w2) node {$3$};
                    \draw (w3) node {$4$};
                    \draw (w4) node {$5$};
                \end{pgfonlayer}
                \begin{pgfonlayer}{fore}
                    \foreach \i in {1,2,3}{
                        \draw[thick, black, fill=white] (v\i) circle (10pt);
                    }
                    \foreach \j in {1,2,3,4}{
                        \draw[thick, black, fill=white] (w\j) circle (10pt);
                    }
                \end{pgfonlayer}
                \def\wdt{0.09};
                \begin{pgfonlayer}{main}
                    \draw[ultra thin, white, fill=gray!40] ($(v1)+(-0.88*\wdt,-0.44*\wdt)$) -- ($(v1)+(0.88*\wdt,0.44*\wdt)$) -- ($(v2)+(0.88*\wdt,0.44*\wdt)$) -- ($(v2)+(-0.88*\wdt,-0.44*\wdt)$) -- cycle;
                    \draw[ultra thin, white, fill=gray!40] ($(v2)+(-0.88*\wdt,0.44*\wdt)$) -- ($(v2)+(0.88*\wdt,-0.44*\wdt)$) -- ($(v3)+(0.88*\wdt,-0.44*\wdt)$) -- ($(v3)+(-0.88*\wdt,0.44*\wdt)$) -- cycle;                
                    \draw[ultra thin, white, fill=gray!40] ($(v3)+(0,-\wdt)$) -- ($(v3)+(0,\wdt)$) -- ($(v1)+(0,\wdt)$) -- ($(v1)+(0,-\wdt)$) -- cycle;
                    \draw[ultra thin, white, fill=gray!40] ($(w1)+(-\wdt,0)$) -- ($(w1)+(\wdt,0)$) -- ($(w2)+(\wdt,0)$) -- ($(w2)+(-\wdt,0)$) -- cycle;
                    \draw[ultra thin, white, fill=gray!40] ($(w2)+(0,-\wdt)$) -- ($(w2)+(0,\wdt)$) -- ($(w3)+(0,\wdt)$) -- ($(w3)+(0,-\wdt)$) -- cycle;
                    \draw[ultra thin, white, fill=gray!40] ($(w3)+(-\wdt,0)$) -- ($(w3)+(\wdt,0)$) -- ($(w4)+(\wdt,0)$) -- ($(w4)+(-\wdt,0)$) -- cycle;
                    \draw[ultra thin, white, fill=gray!40] ($(w4)+(0,-\wdt)$) -- ($(w4)+(0,\wdt)$) -- ($(w1)+(0,\wdt)$) -- ($(w1)+(0,-\wdt)$) -- cycle;
                \end{pgfonlayer}
                \begin{pgfonlayer}{back}
                    \draw[ultra thin, white, fill=black] ($(w1)+(-0.71*\wdt,-0.71*\wdt)$) -- ($(w1)+(0.71*\wdt,0.71*\wdt)$) -- ($(w3)+(0.71*\wdt,0.71*\wdt)$) -- ($(w3)+(-0.71*\wdt,-0.71*\wdt)$) -- cycle;
                    \draw[ultra thin, white, fill=black] ($(w2)+(0.71*\wdt,-0.71*\wdt)$) -- ($(w2)+(-0.71*\wdt,0.71*\wdt)$) -- ($(w4)+(-0.71*\wdt,0.71*\wdt)$) -- ($(w4)+(0.71*\wdt,-0.71*\wdt)$) -- cycle;
                \end{pgfonlayer}
            \end{tikzpicture}
            \caption*{A $p$-core CRG, $p>1/2$ with a gray $C_3$ or $C_4$}
            \label{fig:F2-aaa}
        \end{subfigure} \ \hfill \
    	\caption{The mapping from $W_5$ to a CRG with a gray $C_t$, $t\in\{3,4,5\}$. We may assume that all other edges of the CRG are black, otherwise the embedding is less restrictive. The central degree-$5$ vertex of $W_5$ can be placed in any additional vertex.}
        \label{fig:W5toC3C4}
    \end{figure} 

Using Proposition~\ref{prop:word:structure}, we can now severely restrict the class of CRGs under consideration.
Suppose that $K\in\KK_p(\HH)$ for some value of $p$. 
Since $F_1\mapsto K(1,1)$, Theorem~\ref{thm:MT}\eqref{it:MT:vwvb} gives that either $V(K)=\VW(K)$ or $V(K)=\VB(K)$. 

If $V(K)=\VB(K)$ then since $F_2\mapsto K(0,2)$, there can be no gray edge.
Thus, either $p\geq 1/2$ and Theorem~\ref{thm:MT}\eqref{it:MT:plarge:vb} gives that $|\VB(K)|=1$, hence $g_K(p)=1-p$, or $p<1/2$ and Theorem~\ref{thm:MT}\eqref{it:MT:psmall:vb} gives that all edges are black and Corollary~\ref{cor:graydeg} gives that $g_K(p)>p\geq p/3$.
Hence, the only black-vertex $p$-core CRG consists of a single vertex.

If $V(K)=\VW(K)$ and $p\leq 1/2$, then since $W_5\mapsto K(4,0)$, Theorem~\ref{thm:MT}\eqref{it:MT:psmall:vw} gives that $|\VW(K)|\leq 3$ and Corollary~\ref{cor:components} gives that $g_K(p)\geq p/3$.

Thus, we may assume that $V(K)=\VW(K)$ and $p>1/2$ because in every other case, $g_K(p)\geq\min\bigl\{p/3,1-p\bigr\}$.
By Theorem~\ref{thm:MT}\eqref{it:MT:plarge:vw}, all edges of $K$ are either gray or black.
From Proposition~\ref{prop:word:structure}, we see that the graph induced by the gray edges of $K$ satisfies one of the following:
\begin{enumerate}[(i)]
    \item $\EG(K)$ induces $K_{1,3}$. \label{it:case:claw}
    \item $\EG(K)$ induces $C_3$. \label{it:case:C3}
    \item $\EG(K)$ induces $C_4$. \label{it:case:C4}
    \item $\EG(K)$ induces $C_5$. \label{it:case:C5}
    \item $\EG(K)$ induces a graph with maximum degree at most 2 and girth at least 6. \label{it:case:last}
\end{enumerate}

For case~\eqref{it:case:claw}, Proposition~\ref{prop:star}\eqref{it:star:plarge} gives that if $p>1/2$, then $g_K(p) = p(1-p)+\frac{(2p-1)p^2}{3+2p-1} = \frac{p(2-p)}{2(1+p)}$. 
Since $\frac{p(2-p)}{2(1+p)}-\min\bigl\{p/3,1-p\bigr\} = \max\bigl\{\frac{p(4-5p)}{6(1+p)},\frac{-2+2p+p^2}{2(1+p)}\bigr\} \geq 1/56$, in this case $g_K(p) > \min\bigl\{p/3,1-p\bigr\}$.

For case~\eqref{it:case:C3}, Corollary~\ref{cor:components} gives that $g_K(p)=p/3$.

For case~\eqref{it:case:C4}, Corollary~\ref{cor:graydeg}\eqref{it:cor:graydeg:plarge} and Theorem~\ref{thm:components} give that if $p>1/2$, then $g_K(p)=1/4$.
Since $1/4-\min\bigl\{p/3,1-p\bigr\}=\max\bigl\{\frac{3-4p}{12},\frac{4p-3}{4}\bigr\}\geq 0$ (with equality if and only if $p=3/4$), in this case $g_K(p)\geq\min\bigl\{p/3,1-p\bigr\}$.

For case~\eqref{it:case:C5}, we consider the subgraphs $K'$ of $C_5$. 
If $K'$ has at most two vertices, then Corollary~\ref{cor:components} gives that $g_{K'}(p)\geq p/2$.
If $K'$ has three vertices, then the gray edges induce a subgraph of $P_3$, the path on 3 vertices and since changing a gray edge to black cannot increase the value of the $g$ function, we may assume that the gray edges induce $P_3$.
Thus, Proposition~\ref{prop:star}\eqref{it:star:plarge} gives that if $p>1/2$, then $g_{K'}(p)\geq p(1-p)+\frac{(2p-1)p^2}{2+2p-1}=\frac{p}{2p+1}\geq\frac{p}{3}$. 
If $K'$ has four vertices, then the gray edges induce a subgraph of $P_4$, the path on 4 vertices, and again we may assume that the gray edges induce $P_4$ itself.
We will see the case of the gray edges inducing $P_4$ more thoroughly in Section~\ref{sec:proof:comp}, but it is not difficult to see that if $p>1/2$, then $g_{K'}(p)=\min\bigl\{\frac{p}{1+2p},\frac{-1+3p-p^2}{2(3p-1)}\bigr\}$.
Since $\min\bigl\{\frac{p}{1+2p},\frac{-1+3p-p^2}{2(3p-1)}\bigr\}-\min\bigl\{p/3,1-p\bigr\}\geq 1/40$, in this case $g_K(p)>\min\bigl\{p/3,1-p\bigr\}$.
For $K$ itself, the gray edges induce $C_5$ and if $p>1/2$, then $g_{K}(p)=\min\bigl\{\frac{p}{1+2p},\frac{-1+3p-p^2}{2(3p-1)},\frac{2-p}{5}\bigr\}=\min\bigl\{\frac{p}{1+2p},\frac{2-p}{5}\bigr\}$.
Since $\min\bigl\{\frac{p}{1+2p},\frac{2-p}{5}\bigr\}-\min\bigl\{p/3,1-p\bigr\}\geq 0$ (with equality if and only if $p=3/4$), then case~\eqref{it:case:C5} $g_K(p)\geq\min\bigl\{p/3,1-p\bigr\}$.

Finally, we are left with case~\eqref{it:case:last}.
Consider the gray edge $v_1v_2$ such that $\x(v_1)+\x(v_2)$ is maximum. 
For convenience, let $g=g_K(p)$, $G=\frac{1-p-g}{1-p}$, $P=\frac{2p-1}{1-p}$, $x=\x(v_1)$, and $y=\x(v_2)$.

If $v_1$ and $v_2$ have no other neighbors in the gray-edge graph, then $G+Px=y$ and $G+Py=x$. 
Thus, $2G=(1-P)(x+y)$.
Hence $P<1$ (equivalently, $p<2/3$), $x+y\leq 1$ and 

\vspace{-2cm}

\begin{align*}
    2G = (1-P)(x+y)         &\leq 1-P \\
    \frac{2(1-p-g)}{1-p}    &\leq 1-\frac{2p-1}{1-p} \\
    g                       &\geq p/2 = \min\bigl\{p/3,1-p\bigr\} + \max\biggl\{\frac{p}{6},\frac{-2+3p}{2}\biggr\} \\
                            &\geq \min\bigl\{p/3,1-p\bigr\} + 1/8.
\end{align*}

\vspace{-1cm}

If, without loss of generality $v_1$ has another neighbor via a gray edge (call it $v_0$) but $v_2$ does not, then $v_0$ has weight $G+Px-y$ and since there is no $C_3$, then $v_0$, its gray-edge neighbors and $v_2$ partition a subset of the vertex set of $K$.
Thus, $G+Py=x$ and $G+Px\leq 2y$, so $(2+P)G\leq (2-P^2)x$.
Hence $P<1/\sqrt{2}$ and 

\vspace{-1.5cm}

\begin{align*}
    \x\bigl(v_0\bigr) + \x\bigl(v_1\bigr) + \x\bigl(v_2\bigr)   &\leq 1 \\
    \bigl[G+Px-y\bigr] + \bigl[G+P\bigl(G+Px-y\bigr)\bigr] + y   &\leq 1 \\
    (2+P)G + P(1+P)x - Py           &\leq 1 \\
    (2+P)G + P(1+P)x - (x-G)        &\leq 1 \\
    (3+P)G + (-1+P+P^2)x            &\leq 1 .
\end{align*}
If $-1+P+P^2\geq 0$, then 
\begin{align*}
    (3+P)G + (-1+P+P^2)x                        &\leq 1 \\
    (3+P)G + (-1+P+P^2)\frac{(2+P)G}{2-P^2}     &\leq 1 \\
    G                                           &\leq \frac{2-P^2}{4+3P} \\
    \frac{1-p-g}{1-p}                           &\leq \frac{2(1-p)^2-(2p-1)^2}{4(1-p)+3(2p-1)} \\
    g                                           &\geq \frac{p}{1+2p} \\
                                                &= \min\bigl\{p/3,1-p\bigr\} + \max\biggl\{\frac{p(1-p)}{3(1+2p)},\frac{-1+2p^2}{1+2p}\biggr\} \\
                                                &\geq \min\bigl\{p/3,1-p\bigr\} +5/112 .
\end{align*}
If $-1+P+P^2<0$, then $G+Px\leq 1-x$ and 
\begin{align*}
    (3+P)G + (-1+P+P^2)x                        &\leq 1 \\
    (3+P)G + (-1+P+P^2)\frac{1-G}{1+P}          &\leq 1 \\
    G                                           &\leq \frac{2-P^2}{4+3P} \\
    g                                           &\geq \frac{p}{1+2p} \\
                                                &\geq \min\bigl\{p/3,1-p\bigr\} +5/112 .
\end{align*}

Finally, suppose $v_1$ and $v_2$ each have two gray-edge neighbors.
Call them $v_0$ and $v_3$, respectively.
They have respective weights of $G+Px-y$ and $G+Py-x$.
The maximality of $x+y$ gives $G+Px\leq 2y$ and $G+Py\leq 2x$. 
Thus, $2G\leq (2-P)(x+y)$. 
Hence $P<2$ (equivalently $p<3/4$) and since the girth of the graph induced by gray edges is at least 6, the closed neighborhoods of $v_0$ and $v_3$ partition a subset of the vertex set of $K$.
\begin{align*}
    \bigl[G+(P+1)(G+Px-y)\bigr] + \bigl[G+(P+1)(G+Py-x)\bigr]   &\leq 1 \\
    2(2+P)G + \bigl(-1+P^2\bigr)(x+y)                                     &\leq 1 \\
\end{align*}
If $-1+P^2\geq 0$, then since $p<3/4$,
\begin{align*}
    2(2+P)G + \bigl(-1+P^2\bigr)(x+y)             &\leq 1 \\
    2(2+P)G + \bigl(-1+P^2\bigr)\frac{2G}{2-P}    &\leq 1 \\
    G                                   &\leq \frac{2-P}{6} \\
    \frac{1-p-g}{1-p}                   &\leq \frac{2(1-p)-(2p-1)}{6(1-p)} \\
    g                                   &\geq \frac{3-2p}{6} = \min\bigl\{p/3,1-p\bigr\} + \max\biggl\{\frac{3-4p}{6},\frac{-3+4p}{6}\biggr\} \\
                                        &\geq \min\bigl\{p/3,1-p\bigr\} .
\end{align*}
If $-1+P^2<0$, then since $\bigl(G+Px\bigr)+\bigl(G+Py\bigr)<1$ we have $x+y<(1-2G)/P$ and 
\begin{align*}
    2(2+P)G + \bigl(-1+P^2\bigr)(x+y)           &\leq 1 \\
    2(2+P)G + \bigl(-1+P^2\bigr)\frac{1-2G}{P}  &< 1 \\
    G                                           &< \frac{1+P-P^2}{2(1+2P)} \\
    \frac{1-p-g}{1-p}                           &< \frac{(1-p)^2+(2p-1)(1-p)-(2p-1)^2}{2(1-p)\bigl[(1-p)+2(2p-1)\bigr]} \\
    g                                           &> \frac{-1+3p-p^2}{2(-1+3p)} \\
                                                &= \min\bigl\{p/3,1-p\bigr\} + \max\biggl\{\frac{-3+11p-9p^2}{6(-1+3p)},\frac{1-5p+5p^2}{2(-1+3p)}\biggr\} \\
                                                &\geq \min\bigl\{p/3,1-p\bigr\} + 1/40 .
\end{align*}

This concludes the proof and establishes that $g_K(p)\geq\min\bigl\{p/3,1-p\bigr\}$ for all $p\in [0,1]$. 

\section{Proof of Theorem~\ref{thm:kword}}
\label{sec:proof:kword}

\begin{figure}
    \centering
    \ \hfill \
    \begin{subfigure}[b]{0.21\textwidth}
        \centering
        \begin{tikzpicture}[scale=1]
            \centering
            \def\n{5}
            \def\stretch{0.6}
            \useasboundingbox (-1.35,-0.15) rectangle (1.35,1.65);
            \foreach \i in {1,2}{
                \foreach \j in {1,...,\n}{
                    \coordinate (\i\j) at ($(\stretch*\n/2+\stretch/2-\stretch*\j,1.5*\i-1.5)$);
                }
            }
            \begin{pgfonlayer}{fore}
            \foreach \i in {1,2}{
                \foreach \j in {1,...,\n}{
                    \draw (\i\j) node[vtx] {};
                }
            }
            \end{pgfonlayer}
            \begin{pgfonlayer}{main}
                \foreach \j in {2,...,\n}{
                    \draw[edg] (11) -- (2\j);
                }
                \foreach \i in {2,...,\n-1}{
                    \pgfmathtruncatemacro{\prev}{\i-1};
                    \pgfmathtruncatemacro{\next}{\i+1};
                    \foreach \j in {1,...,\prev}{
                        \draw[edg] (1\i) -- (2\j);
                    }
                    \foreach \j in {\next,...,\n}{
                        \draw[edg] (1\i) -- (2\j);
                    }                }
                \foreach \j in {1,...,\n-1}{
                    \draw[edg] (1\n) -- (2\j);
                }            
            \end{pgfonlayer}
        \end{tikzpicture}
        \caption{$H_{5,5}$}
        \label{fig:H55}
    \end{subfigure}\ \hfill \
    \begin{subfigure}[b]{0.31\textwidth}
        \centering
        \begin{tikzpicture}[scale=1]
            \centering
            \def\n{6}
            \def\stretch{0.7}
            \useasboundingbox (-1.85,-0.15) rectangle (1.85,1.65);
            \foreach \i in {1,2}{
                \foreach \j in {1,...,\n}{
                    \coordinate (\i\j) at ($(\stretch*\n/2+\stretch/2-\stretch*\j,1.5*\i-1.5)$);
                }
            }
            \begin{pgfonlayer}{fore}
            \foreach \i in {1,2}{
                \foreach \j in {1,...,\n}{
                    \draw (\i\j) node[vtx] {};
                }
            }
            \end{pgfonlayer}
            \begin{pgfonlayer}{main}
                \foreach \j in {2,...,\n}{
                    \draw[edg] (11) -- (2\j);
                }
                \foreach \i in {2,...,\n-1}{
                    \pgfmathtruncatemacro{\prev}{\i-1};
                    \pgfmathtruncatemacro{\next}{\i+1};
                    \foreach \j in {1,...,\prev}{
                        \draw[edg] (1\i) -- (2\j);
                    }
                    \foreach \j in {\next,...,\n}{
                        \draw[edg] (1\i) -- (2\j);
                    }
                }
                \foreach \j in {1,...,\n-1}{
                    \draw[edg] (1\n) -- (2\j);
                }            
            \end{pgfonlayer}
        \end{tikzpicture}
        \caption{$H_{6,6}$}
        \label{fig:H66}
    \end{subfigure}\ \hfill \
    \begin{subfigure}[b]{0.41\textwidth}
        \centering
        \begin{tikzpicture}[scale=1]
            \centering
            \def\n{7}
            \def\stretch{0.8}
            \useasboundingbox (-2.50,-0.15) rectangle (2.50,1.65);
            \foreach \i in {1,2}{
                \foreach \j in {1,...,\n}{
                    \coordinate (\i\j) at ($(\stretch*\n/2+\stretch/2-\stretch*\j,1.5*\i-1.5)$);
                }
            }
            \begin{pgfonlayer}{fore}
            \foreach \i in {1,2}{
                \foreach \j in {1,...,\n}{
                    \draw (\i\j) node[vtx] {};
                }
            }
            \end{pgfonlayer}
            \begin{pgfonlayer}{main}
                \foreach \j in {2,...,\n}{
                    \draw[edg] (11) -- (2\j);
                }
                \foreach \i in {2,...,\n-1}{
                    \pgfmathtruncatemacro{\prev}{\i-1};
                    \pgfmathtruncatemacro{\next}{\i+1};
                    \foreach \j in {1,...,\prev}{
                        \draw[edg] (1\i) -- (2\j);
                    }
                    \foreach \j in {\next,...,\n}{
                        \draw[edg] (1\i) -- (2\j);
                    }
                }
                \foreach \j in {1,...,\n-1}{
                    \draw[edg] (1\n) -- (2\j);
                }            
            \end{pgfonlayer}
        \end{tikzpicture}
        \caption{$H_{7,7}$}
        \label{fig:H77}
    \end{subfigure}\ \hfill \
	\caption{Examples of crown graphs, $H_{n,n}$, for which $\RR\bigl(H_{n,n}\bigr)=\bigl\lceil n/2\bigr\rceil$.}
    \label{fig:forb:kword}
\end{figure}

From Theorem~\ref{thm:KP}, we know $\hword\subset\forb\{F_1,F_2,W_5\}$. 

Glen, the first author, and Pyatkin proved that there exist small graphs with large word-representation number.
For all $n\geq 2$, the \emph{crown graph $H_{n,n}$} is a bipartite graph with $n$ vertices in each part that is a complete bipartite graph with a perfect matching missing\footnote{A crown graph is not to be confused with a crown poset. A crown poset is a height-two poset which has a Hasse diagram that is an even cycle.}; see Figure~\ref{fig:forb:kword} for examples.
\begin{theorem}[Glen-K.-Pyatkin\cite{GKP}]
    If $n\geq 5$ then $\RR\bigl(H_{n,n}\bigr)=\lceil n/2\rceil$.
    That is, one needs $\lceil n/2\rceil$ copies of each letter to represent $H_{n,n}$ but not fewer.

    Hence, for all $k\geq 2$, the graph $H_{2k+2,2k+2}$ is not $k$-word representable.
\end{theorem}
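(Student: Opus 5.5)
The second sentence of the statement follows directly from the first. Apply the formula with $n=2k+2$; this is allowed since $k\geq 2$ gives $n\geq 6\geq 5$. Then $\RR\bigl(H_{2k+2,2k+2}\bigr)=\lceil (2k+2)/2\rceil=k+1>k$. Because $\hkword{k}\subseteq\hkword{k'}$ for $k\leq k'$ (representation numbers are minima, and a $k$-representation can be padded), $H_{2k+2,2k+2}$ is not $k$-word-representable. So the substance is the equality $\RR(H_{n,n})=\lceil n/2\rceil$ for $n\geq 5$. I would prove it as two inequalities, writing the parts as $\{a_1,\dots,a_n\}$ and $\{b_1,\dots,b_n\}$ with $a_ib_j$ an edge if and only if $i\neq j$.

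\emph{Upper bound.} I would construct an explicit $\lceil n/2\rceil$-uniform word. The idea is to view $H_{n,n}$ as the comparability graph of the crown poset on $\{a_i\}\cup\{b_j\}$, where $a_i<b_j$ for $i\neq j$, and to use the fact from~\cite{KS08} that comparability graphs are represented by concatenations of permutations. A single permutation (a linear extension) makes every comparable pair alternate. Placing $a_i$ after $b_i$ in that permutation breaks the alternation of the pair $a_i,b_i$.
\begin{itemize}
\item Choose linear extensions $L_1,\dots,L_m$ such that each pair $(a_i,b_i)$ is ``reversed'' in some $L_j$.
\item Pair up the indices: in each $L_j$ with $j\leq \lceil n/2\rceil$, handle two indices $i,i'$ at once by placing $b_i,b_{i'}$ before $a_i,a_{i'}$. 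This is the standard two-at-a-time construction.
\item Check directly that the concatenation $L_1L_2\cdots L_{\lceil n/2\rceil}$ represents $H_{n,n}$. The points to verify are that pairs within a part never alternate, that cross pairs $a_ib_j$ with $i\neq j$ always alternate, and that each $a_ib_i$ fails to alternate.
\end{itemize}

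\emph{Lower bound.} This is the main obstacle. Let $w$ be a $k$-uniform word representing $H_{n,n}$. Since cyclic shifts of a uniform representant still represent the graph, I may normalize $w$. Because all $a_ib_j$ with $i\neq j$ alternate, the relative order of occurrences between the two parts is very rigid. For each $i$, the non-alternation of $a_i,b_i$ forces a ``defect'': a factor containing two copies of one of $a_i,b_i$ with no copy of the other between them. I would associate each index $i$ to one of the $k$ ``rounds'' of $w$, meaning the segments between consecutive copies of a fixed reference letter. The claim to prove is that a single round can host the defects of at most two indices. The reason is that three defects in one round, combined with alternation of all the cross pairs among the three indices, produce a contradiction; this is where $n\geq 5$ should enter, supplying enough other letters to pin down the order. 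That gives $n\leq 2k$, i.e.\ $k\geq\lceil n/2\rceil$.

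If that local argument proves too delicate, my fallback is the poset-dimension route. Concatenations of $k$ permutations representing a comparability graph correspond to realizers of size $k$, and the crown poset has dimension $n$. The work would then be bounding how much a general $k$-uniform word can save over permutational representations, which I expect costs a factor of two.
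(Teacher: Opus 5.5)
\emph{Verdict: the self-contained argument has a genuine gap.} Your upper-bound construction is impossible as designed, and your lower bound is only asserted. Your first paragraph is correct, and it is all the paper itself does. The paper imports $\RR(H_{n,n})=\lceil n/2\rceil$ from \cite{GKP} and reads off $\RR(H_{2k+2,2k+2})=k+1>k$. You do not even need monotonicity, since $\RR$ is defined as a minimum.

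\textbf{Upper bound.} A concatenation $L_1L_2\cdots L_m$ of permutations represents the graph in which $x\sim y$ exactly when $x,y$ occur in the same relative order in every $L_j$. So $\bigcap_j L_j$ is a partial order whose comparability graph is $H_{n,n}$. For $n\geq 3$, $H_{n,n}$ is connected and triangle-free, so its only transitive orientations are $A\to B$ and $B\to A$. Hence $\bigcap_j L_j$ is the standard example $S_n$ ($a_i<b_j$ for $i\neq j$) or its dual, and every $L_j$ is a linear extension of it. This is the poset you mean; the paper's footnote reserves ``crown poset'' for a different object.

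A linear extension of $S_n$ reverses at most one critical pair. If $b_i<a_i$ and $b_{i'}<a_{i'}$, then $b_i<a_i<b_{i'}<a_{i'}<b_i$, a contradiction. Your block with $b_i,b_{i'}$ placed before $a_i,a_{i'}$ reverses the comparable pair $a_i<b_{i'}$, so it is not a linear extension.

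Consequently every permutational representation of $H_{n,n}$ needs at least $\dim S_n=n$ permutations, which is exactly the fact you quote in your fallback. The ``two-at-a-time'' concatenation of $\lceil n/2\rceil$ linear extensions therefore cannot exist. Reaching $\lceil n/2\rceil$ requires a uniform word that is \emph{not} a concatenation of permutations. Producing such a word is the real content of \cite{GKP}. It is also where the hypothesis $n\geq 5$ is needed: the cube $H_{4,4}$ is not a circle graph, so it is not $2$-word-representable, although $\lceil 4/2\rceil=2$.

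\textbf{Lower bound.} The decisive claim, that one round hosts the defects of at most two indices, is stated but never argued. Rounds cut out by a reference letter $r$ are also the wrong bookkeeping. Only neighbours of $r$ occur exactly once per round. The $n$ non-neighbours of $r$ (for $r=a_1$, these are $a_2,\dots,a_n,b_1$) may occur any number of times in a given round. So ``the round hosting the defect of index $i$'' is not well defined.

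Your remark that $n\geq 5$ should enter here is misplaced. For $n\leq 4$ we have $\lceil n/2\rceil\leq 2\leq \RR(G)$ for every non-complete $G$, so the lower bound holds for all $n$; the hypothesis belongs to the construction. Your fallback, that a general $k$-uniform word saves at most a factor of two over permutational representations, is precisely the statement that would need proof.

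As written, then, the proposal establishes the corollary only by citing \cite{GKP}, exactly as the paper does. As a self-contained proof of $\RR(H_{n,n})=\lceil n/2\rceil$ it fails on both sides.
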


So we note that for $k\geq 2$, $\hkword{k}\subset\forb\bigl\{F_1,F_2,H_{2k+2,2k+2}\bigr\}$.

Since nontrivial empty graphs have representation number 2 and cliques have representation number 1, if $k\geq 3$, then we can delete all the edges or add all the nonedges, resulting in an editing procedure with $\min\bigl\{|E(G)|,\tnct-|E(G)|\bigr\}$ operations. 

Hence, $\ed_{\hkword{k}}(p)\leq\min\bigl\{p,1-p\bigr\}$ and $\dist\bigl(n,\hword\bigr)\leq \frac{1}{4}\tnct$.
In addition if $\dist\bigl(G,\hkword{k}\bigr)\geq\bigl(\frac{1}{2}-\epsilon\bigr)\tnct$, then $\min\bigl\{|E(G)|,\tnct-|E(G)|\bigr\}\geq\bigl(\frac{1}{2}-\epsilon\bigr)\tnct$ and so $\bigl|E(G)\bigr|/\tnct\in \bigl[\tfrac{1}{2}-\epsilon,\tfrac{1}{2}+\epsilon\bigr]$.

As far as the lower bound, observe that 
\begin{itemize}
    \item $F_1\mapsto K(1,1)$,
    \item $F_2\mapsto K(0,2)$, and 
    \item $H_{2k+2,2k+2}\mapsto K(2,0)$.
\end{itemize}

As a result, any $K\in\KK_p(\HH)$ has no gray edges. 
By Corollary~\ref{cor:graydeg}, if $K$ has no gray edges, then $g_K(p)\geq\min\bigl\{p,1-p\}$ and the equality is not sharp for any $p\in (0,1)$ unless $p\leq 1/2$ and $K$ is a single white vertex or $p\geq 1/2$ and $K$ is a single black vertex. 

\section{Proof of Theorem~\ref{thm:comp}}
\label{sec:proof:comp}

\begin{figure}
    \centering 
    \ \hfill \
    \begin{subfigure}[b]{0.30\textwidth}
        \centering
        \begin{tikzpicture}[scale=1]
            \centering
            \useasboundingbox (-1.6,-1.25) rectangle (1.6,1.25);
            \coordinate (offset) at (0,-0.4);
            \foreach \i in {1,2,3}{
                \coordinate (v\i) at ($(-30+120*\i:1.45)+(offset)$);
                \coordinate (w\i) at ($(-55+120*\i:0.60)+(offset)$);
            }
            \begin{pgfonlayer}{fore}
                \foreach \i in {1,2,3}{
                    \draw (v\i) node[vtx] {};
                    \draw (w\i) node[vtx] {};
                }
            \end{pgfonlayer}
            \begin{pgfonlayer}{main}
                \foreach \i in {1,2}{
                    \draw[edg] (v\i) -- (w\i);
                    \pgfmathtruncatemacro{\j}{\i+1};
                    \draw[edg] (v\i) -- (v\j);
                }
                \draw[edg] (v3) -- (w3);
                \draw[edg] (v1) -- (v3);
            \end{pgfonlayer}
        \end{tikzpicture}
        \caption{The sun graph $F'_1$}
        \label{fig:F1p}
    \end{subfigure}\ \hfill \
    \begin{subfigure}[b]{0.33\textwidth}
        \centering
        \begin{tikzpicture}[scale=1]
            \centering
            \useasboundingbox (-1.6,-1.25) rectangle (1.6,1.25);
            \coordinate (offset) at (0,-0.4);
            \foreach \i in {1,2,3}{
                \coordinate (v\i) at ($(-30+120*\i:1.45)+(offset)$);
                \coordinate (w\i) at ($(-55+120*\i:0.60)+(offset)$);
            }
            \begin{pgfonlayer}{fore}
                \foreach \i in {1,2,3}{
                    \draw (v\i) node[vtx] {};
                    \draw (w\i) node[vtx] {};
                }
            \end{pgfonlayer}
            \begin{pgfonlayer}{main}
                \foreach \i in {1,2}{
                    \draw[edg] (v\i) -- (w\i);
                    \pgfmathtruncatemacro{\j}{\i+1};
                    \draw[edg] (v\i) -- (v\j);
                    \draw[edg] (w\i) -- (w\j);
                }
                \draw[edg] (v3) -- (w3);
                \draw[edg] (v1) -- (v3);
                \draw[edg] (w1) -- (w3);
            \end{pgfonlayer}
        \end{tikzpicture}
        \label{fig:F2p}
        \caption{The triangular prism $F_2'$}
    \end{subfigure}\ \hfill \
    \begin{subfigure}[b]{0.30\textwidth}
        \centering
        \begin{tikzpicture}[scale=1]
            \centering
            \useasboundingbox (-1.6,-1.25) rectangle (1.6,1.25);
            \coordinate (offset) at (0,-0.15);
            \foreach \i in {1,2,3,4,5}{
                \coordinate (v\i) at ($(18+72*\i:1.20)+(offset)$);
            }
            \begin{pgfonlayer}{fore}
                \foreach \i in {1,2,3,4,5}{
                    \draw (v\i) node[vtx] {};
                }
            \end{pgfonlayer}
            \begin{pgfonlayer}{main}
                \foreach \i in {1,2,3,4}{
                    \pgfmathtruncatemacro{\j}{\i+1};
                    \draw[edg] (v\i) -- (v\j);
                }
                \draw[edg] (v1) -- (v5);
            \end{pgfonlayer}
        \end{tikzpicture}
        \caption{$5$-cycle $C_5$}
        \label{fig:C5}
    \end{subfigure}\ \hfill \
	\caption{Examples of non-comparability graphs.}
    \label{fig:forb:comp}
\end{figure}

There are two types of editing procedures to perform on a graph $G$:
\begin{itemize}
    \item For some $k\geq 2$, randomly partition $V(G)$ into $V_1,\ldots, V_k$, delete any edge with both endpoints in the same $V_i$ and add any nonedge with one endvertex in $V_i$ and the other in $V_j$ such that $|j-i|\geq 2$. 
    \item Add all nonedges.
\end{itemize}

In the first case, the resulting graph is a comparability graph because we can orient edges $\overrightarrow{v_iv_j}$ if $v_i\in V_i$, $v_j\in V_j$ and $i<j$.
In the second case, simply make it a transitive tournament, which corresponds to a chain poset. 

In addition, we can optimally weight the sizes $|V_1|,\ldots,|V_k|$ so as to minimize the expected number of edits.
As a result, the expected number of changes is $g_K(p)$ where $K$ is the CRG which consists of $k$ white vertices, the gray edges form a spanning path and all other edges are black.

As to the lower bound, Gallai~\cite{Gal} completely classified graphs that are not comparability graphs.
There are 8 infinite families along with 10 additional individual graphs.
We do not need all of these to compute the edit distance, we only use one infinite family and two additional graphs.
\begin{corollary}[Gallai~\cite{Gal}]
    Given the graphs $F_1'$ and $F_2'$ in Figure~\ref{fig:forb:comp}, we have
    $$
        \hcomp\subset\forb\bigl(\bigl\{F_1',F_2',C_5=\overline{C_5},\overline{C_6},\overline{C_7},\ldots\bigr\}\bigr) .
    $$
\end{corollary}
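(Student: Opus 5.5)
Every graph in the list turns out to be excluded by one and the same local argument, so I will set that argument up once and then apply it to each graph in turn.

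First, $\hcomp$ is hereditary. If $G$ is the comparability graph of a poset $P$ and $S\subseteq V(G)$, then $G[S]$ is the comparability graph of the subposet induced on $S$. So it suffices to show that none of $F_1'$, $F_2'$, $C_5$, $\overline{C_k}$ $(k\geq 6)$ is a comparability graph, i.e.\ that none admits a transitive orientation.

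The basic tool is the following forcing rule. Suppose $ab$ and $ab'$ are edges and $bb'$ is a nonedge. Then in a transitive orientation the two edges must both point out of $a$ or both point into $a$. Indeed, $b\to a\to b'$ would force the edge $bb'$ by transitivity, and symmetrically for $b'\to a\to b$. I will use it through the following lemma.

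\emph{Lemma.} Let $G$ be a graph with a triangle $uvw$. Suppose that for each $x\in\{u,v,w\}$, the two triangle-neighbours of $x$ can be joined by a sequence of neighbours of $x$ in which consecutive members are \emph{nonadjacent}. Then $G$ has no transitive orientation.

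\emph{Proof of the lemma.} Apply the forcing rule along each such sequence. It shows that each of $u,v,w$ is either a source or a sink with respect to its two triangle edges. But a triangle cannot be oriented so that each of its three vertices is a source or a sink, because two of them would have to be of the same type and are adjacent. (Equivalently, this would give a proper $2$-colouring of the triangle.)

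It remains to exhibit the triangle and the nonadjacent chains for each graph.

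\begin{itemize}
\item \emph{$F_1'$.} Take the central triangle $v_1v_2v_3$. The pendant $w_i$ is adjacent to $v_i$ only. So $v_j,w_i,v_k$ is a chain of neighbours of $v_i$ with consecutive members nonadjacent.
\item \emph{$F_2'$.} Take the triangle $v_1v_2v_3$. The vertex $w_i$ is adjacent to $v_i$ but not to $v_j$ or $v_k$. So again $v_j,w_i,v_k$ is a suitable chain of neighbours of $v_i$.
\item \emph{$\overline{C_k}$ for $k\geq 5$.} This includes $C_5=\overline{C_5}$. Label the vertices by $\mathbb{Z}_k$, so that $i\sim j$ if and only if $j-i\notin\{0,\pm1\}$. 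The neighbours of $a$ are the consecutive run $a+2,a+3,\ldots,a-2$. Consecutive elements of this run are nonadjacent, so the forcing rule makes every vertex a global source or a global sink. Hence $\overline{C_k}$ would have to be bipartite. It is not: for $k\geq 6$, $\{0,2,4\}$ is a triangle, and for $k=5$ the graph is an odd cycle.
\end{itemize}

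The only step needing care is the forcing rule itself, and it is immediate from transitivity. The one place an index check is needed is in $\overline{C_k}$: the neighbourhood of $a$ really is a single run of consecutive residues, so the chain connects all neighbours of $a$. Together with hereditariness, this gives the stated containment.
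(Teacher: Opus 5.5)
Your proof is correct, and it takes a different route from the paper. The paper gives no argument for this containment. It extracts these graphs from Gallai's complete classification of minimal non-comparability graphs (eight infinite families plus ten sporadic graphs) and states the result as a corollary.

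You instead verify directly that none of the listed graphs admits a transitive orientation, and combine this with the heredity of $\hcomp$. Your tool is the elementary forcing rule: two edges $ab,ab'$ with $bb'$ a nonedge must be oriented the same way at $a$. This is Gallai's $\Gamma$-relation, the starting point of his theory. Your triangle lemma handles $F_1'$ and $F_2'$. The observation that each neighbourhood in $\overline{C_k}$ is a single run of pairwise-consecutive, hence nonadjacent, residues forces a source/sink $2$-colouring, which is ruled out by the triangle $\{0,2,4\}$ for $k\geq 6$ and by odd length for $k=5$.

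What your route buys is self-containment. It uses only the easy direction of Gallai's theorem, namely that these particular graphs are not comparability graphs. The deep part of Gallai's work is irrelevant to this containment: that his list is complete, and that its members are minimal. The citation is shorter and places these graphs within the full classification, but neither completeness nor minimality is used anywhere in the paper's argument.

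One small remark: $F_2'$ is the triangular prism, whose complement is $C_6$. So $F_2'\cong\overline{C_6}$, and your separate treatment of $F_2'$ is already covered by your $\overline{C_k}$ case. This redundancy is harmless and is also present in the statement itself.
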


To see which CRGs we will use, we see which forbidden graphs map to which CRGs.  
\begin{proposition}
    The following embeddings exist:
    \begin{enumerate}[(a)]
        \item $F_1'\mapsto K(1,1)$. \label{it:comp:structure:k11}
        \item $F_2'\mapsto K(0,2)$. \label{it:comp:structure:k02}
        \item If $K$ has all of its vertices white, and all of its edges either black or gray such that the gray edges have a $K_{1,3}$ as a subgraph, then $F_1'\mapsto K$. \label{it:comp:structure:claw}
        \item If $t\in\{3,4,5\}$ and $K$ has all of its vertices white, and all of its edges either black or gray such that the gray edges have a $C_t$ as a subgraph, then $C_5\mapsto K$. \label{it:comp:structure:smcycle}
        \item If $t\geq 6$ and $K$ has all of its vertices white, and all of its edges either black or gray, such that the gray edges have a $C_t$ as a subgraph, then $\overline{C_t}\mapsto K$. \label{it:comp:structure:cycle}
    \end{enumerate}
    \label{prop:comp:structure}
\end{proposition}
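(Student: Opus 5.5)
Each item will be proved by writing down an explicit map $\varphi$ and checking the two conditions in the definition of a colored homomorphism pair by pair. Two observations make the checks short. First, a gray CRG-edge accepts both edges and nonedges. Second, in items (c)--(e) every vertex of $K$ is white and every edge of $K$ is black or gray. So in those items we only need three things: each fibre $\varphi^{-1}(v)$ is an independent set; every nonedge joining two different fibres goes to a gray edge; every edge joining two different fibres goes to any edge (black or gray). Extra gray edges and unused vertices of $K$ only make the embedding easier. We may therefore assume the gray edges are exactly the stated $K_{1,3}$ or $C_t$ on the vertices used, and all other edges are black.

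For (a), note that $F_1'$ is a split graph: the triangle $\{v_1,v_2,v_3\}$ is a clique and the pendant vertices $\{w_1,w_2,w_3\}$ form an independent set. Map the clique to the black vertex and the independent set to the white vertex; every pair between them then uses the gray edge. For (b), the complement of the prism $F_2'$ is $C_6$, which is bipartite. Hence $F_2'$ splits into two triangles (cliques). Send each triangle to its own black vertex, so all cross pairs use the gray edge.

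For (c), send $w_1,w_2,w_3$ (an independent set) to the center $c$ of the gray star, and send $v_i$ to the leaf $\ell_i$. The edges $v_iw_i$ and nonedges $v_iw_j$ lie on the gray edges $c\ell_i$, which is allowed. The edges $v_iv_j$ go to distinct leaves joined by a black (or gray) edge, which is also allowed.

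For (d), label $C_5$ as $1,2,3,4,5$ cyclically. Its nonedges are $13,14,24,25,35$.
When $t=3$, use the classes $\{1,3\},\{2,4\},\{5\}$. All three pairs of classes are joined by gray edges, so every condition holds.
When $t=4$, use $A=\{1,3\}$, $B=\{2\}$, $C=\{4\}$, $D=\{5\}$. The nonedges between classes lie in the pairs $AC$, $BC$, $BD$, $AD$, which form the gray $4$-cycle $A\,C\,B\,D$. The remaining pairs $AB$ and $CD$ contain only edges ($12$, $23$, $45$), so black is allowed there.
When $t=5$, send vertex $i$ to the cycle vertex $2i \bmod 5$. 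Each nonedge (difference $2$) lands on consecutive, hence gray, cycle vertices. Each edge lands on vertices at distance $2$, which may be joined by black.

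For (e), map $\overline{C_t}$ bijectively onto the gray $C_t$ by the identity labelling. The nonedges of $\overline{C_t}$ are exactly the edges of $C_t$, so they land on gray edges. The edges of $\overline{C_t}$ land on non-consecutive pairs, which are black or gray. All fibres are singletons, so the independence condition is automatic.

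The only step needing real care is (d) with $t=4$, where a partition of $C_5$ must be found whose nonedge pattern between classes is exactly a $4$-cycle. The partition $\{1,3\},\{2\},\{4\},\{5\}$ found above does this.
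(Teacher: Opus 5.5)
Your proof is correct and takes the same approach as the paper: the split structure of $F_1'$, the two-triangle cover of $F_2'$, sending the independent pendant set to the star center, partitions of $C_5$ into independent sets placed along the gray $C_3$ or $C_4$, and the identity map of $\overline{C_t}$ onto the gray $C_t$. Your explicit check of the $t=4$ partition and of the map $i\mapsto 2i \bmod 5$ for $t=5$ spells out what the paper leaves to its figure and to the observation $C_5=\overline{C_5}$.
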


\begin{proof}
    For \eqref{it:comp:structure:k11}, we observe that $F_1'$ is a split graph. 
    That is, $F_1'$ can be partitioned into two parts, one is an independent set and one is a clique.
    The independent set maps to the white vertex and the clique maps to the black vertex. 

    For \eqref{it:comp:structure:k02}, we observe that $\chi\bigl(\overline{F_2'}\bigr)=2$.
    Hence, $F_2'$ can be partitioned into two parts, each of which is a clique.
    Each of the two cliques maps to a different black vertex.

    For \eqref{it:comp:structure:claw}, we refer to the mapping in Figure~\ref{fig:F1toK13}.
    The graph $F_1'$ is on $6$ vertices. 
    The vertices of the independent set of size $3$, consisting of the degree-$1$ vertices, are all mapped to the center of the gray $K_{1,3}$. 
    The remaining degree-$3$ vertices are mapped to different leaves of the gray $K_{1,3}$. 

    For \eqref{it:comp:structure:smcycle}, we recall the mapping in Figure~\ref{fig:W5toC3C4} in the cases of $t\in\{3,4\}$.
    The vertices of the complement of $C_5$ are mapped as independent sets to the vertices of $C_t$.

    For \eqref{it:comp:structure:smcycle} in the case of $t=5$ and for \eqref{it:comp:structure:cycle} in the case of $t\geq 6$, it is easy to see that the vertices of $\overline{C_t}$ can be mapped to the vertices of a $t$-vertex $p$-core CRG where the gray edges induced a $t$-cycle such that the vertices are mapped to different vertices of the CRG and the nonedges are mapped to the gray edges. 
\end{proof}

\newcommand{\kpath}[1]{\widetilde{P}_{#1}}
The most important CRGs for $\hcomp$ are denoted $\kpath{k}$. 
\begin{definition}
    For $k\geq 2$, the CRG $\kpath{k}$ consists of $k$ white vertices and all edges black except for a set of $k-1$ gray edges that form a spanning path.
    \label{defn:kpath}
\end{definition}

\begin{lemma}
    For $p\in (0,1)$, the only $p$-core CRGs in $\KK\bigl(\hcomp\bigr)$ for which $g_K(p)=\ed_{\hcomp}(p)$ are $\kpath{2}=K(2,0)$ if $p\leq 1/2$ and $\kpath{2},\kpath{3},\ldots$ and $K(0,1)$ if $p>1/2$. 
    \label{lem:classify}
\end{lemma}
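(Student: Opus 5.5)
We follow the template of Section~\ref{sec:proof:word}. Let $K\in\KK_p(\hcomp)$ with $g_K(p)=\ed_{\hcomp}(p)$. Since $F_1'\mapsto K(1,1)$ by Proposition~\ref{prop:comp:structure}\eqref{it:comp:structure:k11}, Theorem~\ref{thm:MT}\eqref{it:MT:vwvb} forces $V(K)=\VW(K)$ or $V(K)=\VB(K)$.

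\emph{All vertices black.} Since $F_2'\mapsto K(0,2)$, there are no gray edges.
\begin{itemize}
\item If $p\ge 1/2$, Theorem~\ref{thm:MT}\eqref{it:MT:plarge:vb} gives $K=K(0,1)$, with $g_K(p)=1-p$.
\item If $p<1/2$, all edges are white and Corollary~\ref{cor:graydeg}\eqref{it:cor:graydeg:psmall} gives $g_K(p)>p>p/2=g_{K(2,0)}(p)$. Here $g_{K(2,0)}(p)=p/2$ by Corollary~\ref{cor:components}, and $K(2,0)=\kpath{2}$ lies in $\KK(\hcomp)$ since it realizes the bipartite-type editing. So $K$ is not optimal.
\end{itemize}
The value $K(0,1)$ cannot be optimal for $p\le 1/2$, since $1-p\ge p/2$ there with equality never holding. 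It remains to see that it is among the optimal ones for $p>1/2$ (for instance when $p\ge 3/4$). The phrase ``the only'' in the lemma should be read as: the optimum is attained only within this list.

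\emph{All vertices white, $p\le 1/2$.} Theorem~\ref{thm:MT}\eqref{it:MT:psmall:vw} makes every edge gray, so $K=K(r,0)$. Then $C_5\mapsto K(3,0)$ (the $t=3$ case of Proposition~\ref{prop:comp:structure}\eqref{it:comp:structure:smcycle}) forces $r\le 2$, and Corollary~\ref{cor:components} gives $g=p/r$. Hence the optimum is $\kpath{2}=K(2,0)$; for $p>0$, $K(1,0)$ is strictly worse.

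\emph{All vertices white, $p>1/2$.} By Theorem~\ref{thm:MT}\eqref{it:MT:plarge:vw}, every edge is black or gray. Let $\Gamma$ be the graph of gray edges.
\begin{itemize}
\item Proposition~\ref{prop:comp:structure}\eqref{it:comp:structure:claw} excludes a $K_{1,3}$ in $\Gamma$, so $\Delta(\Gamma)\le 2$.
\item Parts \eqref{it:comp:structure:smcycle}--\eqref{it:comp:structure:cycle} exclude every cycle $C_t$ with $t\ge 3$.
\end{itemize}
Thus $\Gamma$ is a linear forest: a disjoint union of paths, with isolated vertices counted as paths of length zero. Each path with its black edges is a component in the non-gray-edge sense; a CRG made of several such pieces joined by gray edges has $g^{-1}=\sum g_{\kpath{k_i}}^{-1}$ by Theorem~\ref{thm:components}. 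To reduce to a single path, one can instead append the paths end to end. Adding the $\ell-1$ connecting gray edges can only decrease $g$, since changing black to gray only lowers ${\bf M}_K$. The result is $\kpath{k}$ with $k=\sum k_i$, which is still in $\KK(\hcomp)$, because a spanning path contains no $K_{1,3}$ or cycle.

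So $\ed_{\hcomp}(p)=\inf_k g_{\kpath{k}}(p)$, and any optimal $p$-core $K$ must satisfy $g_K=g_{\kpath{k}}$. To conclude that $K$ itself equals $\kpath{k}$, we need strict decrease: when $K$ has $\ell\ge2$ path components, joining them with a gray edge should strictly lower $g$. By Theorem~\ref{thm:graydeg}\eqref{it:graydeg:plarge}, the optimal weight vector is unique and positive. With $\x>0$ on the two new endpoints, $\x^T{\bf M}\x$ drops by $2(1-p)\x(u)\x(v)>0$, which gives the strict decrease.

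\emph{Main obstacle.} The delicate step is the reduction of linear forests to a single path together with this strictness. The strictness argument must be applied to the optimal vector of $K$, which the concatenated CRG need not share, but evaluating the concatenated CRG's quadratic form at $K$'s vector already gives the strict upper bound. One must also handle isolated gray-free vertices, which are exactly the $\kpath{1}$ pieces. Finally, $K(0,1)$ and the $\kpath{k}$ are both part of the statement for $p>1/2$, and one checks which is minimal on each range.
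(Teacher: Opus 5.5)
Your argument is correct and follows the paper's proof step for step: $F_1'$ forces a monochromatic vertex set, $F_2'$ rules out gray edges between black vertices, $C_5\mapsto K(3,0)$ caps the white case at two vertices when $p\le 1/2$, and for $p>1/2$ the claw and cycle exclusions leave a linear forest that is joined into a spanning path. Your strictness step goes further than the paper. You evaluate the joined CRG's quadratic form at $K$'s own optimal vector and get a drop of $2(1-p)\x(u)\x(v)>0$. This supplies what the paper's ``we may assume'' (which only gives $\geq$) leaves implicit for the word ``only''.

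One justification should be replaced. Membership $\kpath{k}\in\KK(\hcomp)$ does not follow from its gray graph having no $K_{1,3}$ and no cycle, since those are only necessary conditions extracted from three specific forbidden graphs. It holds for a different reason. Any graph with a colored homomorphism to $\kpath{k}$ has independent parts $V_1,\ldots,V_k$, with $V_i$ and $V_j$ completely joined whenever $|i-j|\ge 2$. So orienting every edge from $V_i$ to $V_j$ for $i<j$ is transitive, which is exactly the editing procedure at the start of Section~\ref{sec:proof:comp}.

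Smaller points:
\begin{enumerate}
\item Positivity of $\x$ comes from $p$-coreness (a zero-weight vertex could be deleted without changing $g$), not from Theorem~\ref{thm:graydeg}.
\item Your aside invoking Theorem~\ref{thm:components} is misapplied. In $K$ all edges between distinct gray paths are black, so the paths are not non-gray components; simply drop that remark.
\item For $p>1/2$ you still need a one-line dismissal of a lone white vertex $K(1,0)$, whose value $p$ exceeds $g_{\kpath{2}}(p)=p/2$.
\end{enumerate}
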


\begin{proof}
Suppose that $K\in\KK_p(\HH)$ for some value of $p$. 
Proposition~\ref{prop:comp:structure}\eqref{it:comp:structure:k11} gives that $F_1'\mapsto K(1,1)$, so by Theorem~\ref{thm:MT}\eqref{it:MT:vwvb} either $V(K)=\VW(K)$ or $V(K)=\VB(K)$. 

First, let $p\leq 1/2$. 
In this case, if $V(K)=\VB(K)$, then since Proposition~\ref{prop:comp:structure}\eqref{it:comp:structure:k11} gives that $F_2'\mapsto K(0,2)$, there can be no gray edge and Corollary~\ref{cor:graydeg} gives that $g_K(p)>p\geq p/2$.
If $V(K)=\VW(K)$, then since $C_5\mapsto K(3,0)$, Theorem~\ref{thm:MT}\eqref{it:MT:psmall:vw} gives that $|\VW(K)|\leq 2$ and Corollary~\ref{cor:components} gives that $g_K(p)\geq p/2$, with equality if and only if $K=K(2,0)$.

Next, let $p>1/2$.
Again, if $V(K)=\VB(K)$ then since Proposition~\ref{prop:comp:structure}\eqref{it:comp:structure:k11} gives that $F_2'\mapsto K(0,2)$, there can be no gray edge.
Thus, Theorem~\ref{thm:MT}\eqref{it:MT:plarge:vb} gives that $|\VB(K)|=1$, hence $K=K(0,1)$ and $g_K(p)=1-p$.

Finally, we have $V(K)=\VW(K)$.
By Theorem~\ref{thm:MT}\eqref{it:MT:plarge:vw}, all edges of $K$ are either gray or black.
From Proposition~\ref{prop:comp:structure}\eqref{it:comp:structure:claw},~\eqref{it:comp:structure:smcycle}, and~\eqref{it:comp:structure:cycle}, the graph induced by the gray edges of $K$ has maximum degree 2 and has no cycle.

Thus the graph in $K$ induced by the gray edges must consist of disjoint paths.
However, if we change black edges to gray edges in a CRG $K$ to form the CRG $K'$, it must be the case that $g_K(p)\geq g_{K'}(p)$.
Thus, by adding gray edges to connect smaller paths together, we may assume that every CRG under consideration must have its gray edges induce a spanning path. 

Thus, we have established that 
$$ \ed_{\hcomp}(p)=\min\biggl\{\frac{p}{2},\min\Bigl\{g_{\kpath{k}}(p) : k\geq 3\Bigr\},1-p\biggr\} $$
and that $\ed_{\hcomp}(p)=p/2$ for all $p\in [0,1/2]$.
\end{proof}

It remains to analyze the various functions $g_{\kpath{k}}(p)$ for $k\geq 2$ and $p>1/2$. 
Consider the CRG $\kpath{k}$ with vertices $v_1v_2\cdots v_k$ inducing a path such that the edges $v_iv_{i+1}$ are gray, for all $i\in\{1,\ldots,k-1\}$, and all other edges are black. 

As in the proof of Proposition~\ref{prop:star}, we will do a change of variables in order to simplify things.
Let 
$$ g=g_K(p), \qquad G=\frac{1-p-g}{1-p} , \qquad P=\frac{2p-1}{1-p} . $$

For $p\in (1/2,1)$ and any $p$-core CRG $K$ on $k$ vertices, let $\widehat{{\bf M}}_K(P) = {\bf J} - \frac{1}{1-p}{\bf M}_K(p) = {\bf J} - (P+2){\bf M}_K\bigl(\frac{P+1}{P+2}\bigr)$, where ${\bf J}$ is the $k\times k$ all-ones matrix. 

In the case where $K=\kpath{k}$,
$$  \bigl(\widehat{m}_{\kpath{k}}\bigr)_{ij}  =   \begin{cases}
                    -P,  &\mbox{if $i=j$;} \\
                    1,  &\mbox{if $|i-j|=1$; and} \\
                    0,    &\mbox{else.}
                \end{cases}
$$

The advantage to considering $\widehat{{\bf M}}_K(P)$ for $K=\kpath{k}$ is that it is a symmetric tridiagonal Toeplitz matrix.\footnote{A Toeplitz matrix is one that is constant on each of its diagonals.} 
Furthermore if $\x$ is the optimal vector given by Theorem~\ref{thm:graydeg} for which $\x>{\bf 0}$, and $\x^T{\bf 1}=1$ and ${\bf M}_K(p)\cdot\x = g_K(p){\bf 1}$, then this $\x$ also obeys $\widehat{{\bf M}}_K(P)\cdot\x=G{\bf 1}$.

Lemma~\ref{lem:Toeplitz} gives the eigenvalues and eigenvectors of $\widehat{{\bf M}}_{\kpath{k}}(P)$.

\begin{lemma}
    Let $k\geq 2$ and $\widehat{{\bf M}}$ be a $k\times k$ symmetric tridiagonal Toeplitz matrix such that the diagonal entries are $-P$ and the super-diagonal and sub-diagonal entries are all equal to $1$. 
    Then, for each $a\in \{1,\ldots,k\}$, $\lambda_a=2\cos\bigl(\frac{a\pi}{k+1}\bigr)-P$ is an eigenvalue of $\widehat{{\bf M}}$ such that a corresponding unit eigenvector is 
    $$ 
        \bigl({\bf w}_a\bigr)_b = \biggl(\sqrt{\frac{2}{k+1}}\sin\Bigl(\frac{a\pi}{k+1}\cdot b\Bigr)\biggr)_b . 
    $$
    Furthermore,
    $$
        {\bf w}_a^T{\bf 1} 
        =   \begin{cases}
                0,  &   \mbox{if $a$ is even;} \\
                \sqrt{\frac{2}{k+1}}\, \frac{1+\cos\bigl(\frac{a\pi}{k+1}\bigr)}{\sin\bigl(\frac{a\pi}{k+1}\bigr)}, & \mbox{if $a$ is odd.}
            \end{cases}
    $$
    \label{lem:Toeplitz}
\end{lemma}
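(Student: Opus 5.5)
The plan is to verify the eigen-equation directly and then evaluate the sum ${\bf w}_a^T{\bf 1}$ in closed form. Write $\theta=\frac{a\pi}{k+1}$ and $s_b=\sin(b\theta)$ for $b=0,1,\ldots,k+1$. Then $s_0=0$, and $s_{k+1}=\sin(a\pi)=0$.

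\textbf{Eigen-equation.} For $1\le b\le k$, the $b$-th entry of $\widehat{{\bf M}}\,(s_1,\ldots,s_k)^T$ is
$$ s_{b-1}-Ps_b+s_{b+1}. $$
The boundary cases $b=1$ and $b=k$ fit this same formula because $s_0=s_{k+1}=0$. By the sum-to-product identity $\sin((b-1)\theta)+\sin((b+1)\theta)=2\cos\theta\,\sin(b\theta)$, this entry equals $(2\cos\theta-P)s_b=\lambda_a s_b$. The vector is nonzero, since $s_1=\sin\theta\neq 0$ for $1\le a\le k$. Hence $\lambda_a$ is an eigenvalue with eigenvector $(s_b)_b$.

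\textbf{Normalization.} Since $\sin^2 x=\tfrac12(1-\cos 2x)$,
$$ \sum_{b=1}^k s_b^2=\frac{k}{2}-\frac12\sum_{b=1}^k\cos(2b\theta). $$
The full sum $\sum_{b=0}^{k}\cos(2b\theta)$ is the real part of a geometric series of $(k+1)$-th roots of unity $e^{2i\theta}$. This root is not $1$ because $1\le a\le k$, so the series is $0$. Hence $\sum_{b=1}^k\cos(2b\theta)=-1$, giving $\sum_{b=1}^k s_b^2=\frac{k+1}{2}$. This justifies the factor $\sqrt{2/(k+1)}$. The $k$ eigenvalues are distinct because cosine is injective on $(0,\pi)$, so this gives the full spectral decomposition.

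\textbf{The sum ${\bf w}_a^T{\bf 1}$.} This is the only step requiring some care. I would compute $S=\sum_{b=1}^k\sin(b\theta)$ by telescoping. Multiply by $2\sin(\theta/2)$ and use
$$ 2\sin(\theta/2)\sin(b\theta)=\cos\bigl((b-\tfrac12)\theta\bigr)-\cos\bigl((b+\tfrac12)\theta\bigr). $$
Summing over $b$ gives
$$ 2\sin(\theta/2)\,S=\cos(\theta/2)-\cos\bigl((k+\tfrac12)\theta\bigr). $$
Now $(k+\tfrac12)\theta=a\pi-\theta/2$, so $\cos\bigl((k+\tfrac12)\theta\bigr)=(-1)^a\cos(\theta/2)$.

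If $a$ is even, the right side is $0$, so $S=0$.

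If $a$ is odd, $S=\cot(\theta/2)$. By the half-angle identity, $\cot(\theta/2)=\frac{1+\cos\theta}{\sin\theta}$.

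Multiplying by $\sqrt{2/(k+1)}$ gives the stated formula. The division by $\sin(\theta/2)$ is valid since $0<\theta<\pi$.
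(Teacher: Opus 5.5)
Your proposal is correct and follows essentially the paper's route: you verify the eigen-equation directly via the sum-to-product identity, and you get the normalization from a roots-of-unity geometric series. The only variation is that you evaluate $\sum_{b=1}^k \sin(b\theta)$ by telescoping after multiplying by $2\sin(\theta/2)$, where the paper uses a complex-exponential geometric sum. Your write-up is also slightly more careful than the paper's: you restrict to $1\le a\le k$, which is needed for $e^{2i\theta}\neq 1$, and you never divide by $\sin(b\theta)$, which can vanish.
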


Lemma~\ref{lem:Toeplitz} is proven in Appendix~\ref{sec:Toeplitz}.

\begin{lemma}
    Let $K$ be a CRG with $V(K)=\VW(K)$ and $E(K)=\EB(K)\sqcup\EG(K)$ and let $p>1/2$. 
    Let $\widehat{{\bf M}}={\bf J}-\frac{1}{1-p}{\bf M}_K(p)$. 
    Let ${\bf w}$ be an eigenvector of $\widehat{{\bf M}}$ corresponding to the eigenvalue $\lambda$. 
    If $K$ is $p$-core, then 
    $$
        \lambda     
        <       \biggl(1-\frac{g_K(p)}{1-p}\biggr) \frac{\bigl({\bf w}^T{\bf 1}\bigr)^2}{\|{\bf w}\|_2^2} .
    $$
    \label{lem:eigencore}
\end{lemma}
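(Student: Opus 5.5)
The plan is to read the eigenvalue inequality as a second-order optimality condition for the quadratic program defining $g_K(p)$, rewritten in terms of $\widehat{{\bf M}}$. Put $G=1-\frac{g_K(p)}{1-p}$. For any $\y$ with $\y^T{\bf 1}=1$ we have $\y^T{\bf J}\y=1$, so
$$ \y^T\widehat{{\bf M}}\y = 1-\frac{1}{1-p}\,\y^T{\bf M}_K(p)\y . $$
Hence minimizing $\y^T{\bf M}_K(p)\y$ over the simplex is the same as maximizing $\y^T\widehat{{\bf M}}\y$ there, and the maximum value is $G$. Let $\x$ be the optimal weight vector. Because $K$ is $p$-core, Theorem~\ref{thm:graydeg} says $\x$ is unique and satisfies ${\bf M}_K(p)\x=g_K(p){\bf 1}$. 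Also $\x>{\bf 0}$: a zero coordinate would let us delete that vertex without changing $g$, contradicting $p$-coreness. Translating gives $\widehat{{\bf M}}\x=G{\bf 1}$ and $\x^T\widehat{{\bf M}}\x=G$.

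The key step is to show that $\widehat{{\bf M}}$ is negative definite on the hyperplane ${\bf 1}^\perp=\{{\bf z}:{\bf z}^T{\bf 1}=0\}$. Fix ${\bf z}\neq{\bf 0}$ with ${\bf z}^T{\bf 1}=0$. Since $\x$ is interior, $\x+t{\bf z}$ lies in the simplex for all small $|t|$. Its objective value is
$$ (\x+t{\bf z})^T{\bf M}_K(p)(\x+t{\bf z}) = g_K(p)+2t\,g_K(p)\,{\bf z}^T{\bf 1}+t^2{\bf z}^T{\bf M}_K(p){\bf z} = g_K(p)+t^2{\bf z}^T{\bf M}_K(p){\bf z}. $$
If ${\bf z}^T{\bf M}_K(p){\bf z}<0$, this falls below the minimum $g_K(p)$, which is impossible. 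If ${\bf z}^T{\bf M}_K(p){\bf z}=0$, every such point is also optimal, contradicting uniqueness. So ${\bf z}^T{\bf M}_K(p){\bf z}>0$. Since ${\bf z}^T{\bf J}{\bf z}=0$, this gives ${\bf z}^T\widehat{{\bf M}}{\bf z}<0$.

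Now take an eigenvector ${\bf w}$ with eigenvalue $\lambda$, and set $c={\bf w}^T{\bf 1}$. Decompose ${\bf w}=c\x+{\bf z}$; then ${\bf z}^T{\bf 1}=c-c=0$. The cross term vanishes because ${\bf z}^T\widehat{{\bf M}}\x=G\,{\bf z}^T{\bf 1}=0$. Therefore
$$ \lambda\|{\bf w}\|_2^2={\bf w}^T\widehat{{\bf M}}{\bf w}=c^2\,\x^T\widehat{{\bf M}}\x+{\bf z}^T\widehat{{\bf M}}{\bf z}=G\,({\bf w}^T{\bf 1})^2+{\bf z}^T\widehat{{\bf M}}{\bf z}. $$
The last term is strictly negative whenever ${\bf z}\neq{\bf 0}$, and dividing by $\|{\bf w}\|_2^2$ gives the claimed strict inequality.

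The main obstacle is the degenerate case ${\bf z}={\bf 0}$, that is, ${\bf w}$ parallel to $\x$. Then the computation above gives equality, not strict inequality. I would handle it as follows. If ${\bf w}=c\x$ is an eigenvector, then $\lambda c\,\x=\widehat{{\bf M}}(c\x)=cG{\bf 1}$, so ($G\neq0$) $\x$ is proportional to ${\bf 1}$ and $\x={\bf 1}/k$. This forces $\widehat{{\bf M}}$ to have constant row sums, and in that situation the bound holds only with equality. I would therefore verify whether $\x={\bf 1}/k$ can actually occur for the CRGs in question; for the uses with $\kpath{k}$ it is excluded by the unequal row sums of $\widehat{{\bf M}}_{\kpath{k}}(P)$. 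Otherwise I would state the lemma for eigenvectors not parallel to $\x$, with a non-strict inequality in general.
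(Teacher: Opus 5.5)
This is essentially the paper's argument. The paper perturbs to ${\bf v}=(\x+\epsilon{\bf w})/(1+\epsilon\,{\bf w}^T{\bf 1})$, which in your notation equals $\x+\frac{\epsilon}{1+\epsilon c}\,{\bf z}$, so its appeal to the strict optimality of $\x$ is exactly your second-order condition along ${\bf z}\in{\bf 1}^{\perp}$, and your decomposition ${\bf w}=c\x+{\bf z}$ just makes the bookkeeping explicit.

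Your degenerate case is genuine, and the paper's proof also passes over it. If ${\bf w}\parallel\x$ then ${\bf v}=\x$ and one gets equality rather than strict inequality; for example $\widetilde{P}_2=K(2,0)$ with ${\bf w}_1$. Note that $\widehat{{\bf M}}_{\widetilde{P}_2}$ has equal row sums, so your row-sum remark does not cover $k=2$. The issue never matters in the application, though: there ${\bf w}_2^T{\bf 1}=0$, which gives ${\bf z}={\bf w}_2\neq{\bf 0}$ directly.
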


\begin{proof}
    Let $\x$ be the optimal weight from Theorem~\ref{thm:graydeg} and let $g=g_K(p)$. 
    Observe that $\widehat{{\bf M}}\cdot\x=\bigl(1-\frac{g}{1-p}\bigr){\bf 1}$ and $\x^T\cdot\widehat{{\bf M}}\cdot\x=1-\frac{g}{1-p}$
    Let $\epsilon>0$ be sufficiently small such that $1+\epsilon{\bf w}^T\cdot{\bf 1}>0$ and $\x+\epsilon{\bf w}\geq {\bf 0}$.

    Consider the vector ${\bf v}=\frac{1}{1+\epsilon\cdot{\bf w}^T\cdot{\bf 1}}\bigl(\x+\epsilon{\bf w}\bigr)$.
    By the optimality of $\x$, it must be the case that 
    \begin{align*}
        \x^T\widehat{{\bf M}}\x 
        &> {\bf v}^T\widehat{{\bf M}}{\bf v} = \frac{1}{\bigl(1+\epsilon{\bf w}^T{\bf 1}\bigr)^2} \bigl(\x+\epsilon{\bf w}\bigr)^T \widehat{{\bf M}} \bigl(\x+\epsilon{\bf w}\bigr) \\
        \bigl(1+\epsilon\cdot{\bf w}^T{\bf 1}\bigr)^2 \x^T\widehat{{\bf M}}\x 
        &> \x^T\widehat{{\bf M}}\x + 2\epsilon \Bigl(1-\frac{g}{1-p}\Bigr){\bf w}^T{\bf 1} + \epsilon^2\lambda\bigl\|{\bf w}\bigr\|_2^2 \\
        \Bigl(2\epsilon{\bf w}^T{\bf 1}+\epsilon^2\bigl({\bf w}^T{\bf 1}\bigr)^2\Bigr) \Bigl(1-\frac{g}{1-p}\Bigr) 
        &> 2\epsilon \Bigl(1-\frac{g}{1-p}\Bigr)\bigl({\bf w}^T{\bf 1}\bigr) + \epsilon^2\lambda\bigl\|{\bf w}\bigr\|_2^2 \\
        \bigl({\bf w}^T{\bf 1}\bigr)^2 \Bigl(1-\frac{g}{1-p}\Bigr) 
        &> \lambda\bigl\|{\bf w}\bigr\|_2^2
    \end{align*}
\end{proof}

Now we use Lemma~\ref{lem:kpath} to establish exactly what the structure of $\ed_{\hcomp}(p)$ is.

\begin{lemma}
    For $k\geq 2$, let the real number $p_k$ be as in Definition~\ref{defn:pk} and the CRG $\kpath{k}$ be as in Definition~\ref{defn:kpath}. 
    The CRG $\kpath{k}$ is not $p$-core for $p\in (0,p_k]$ and is $p$-core for all $p\in \bigl(p_k,p_{k+1}\bigr]$.
    Moreover, $g_{\kpath{k}}(p)>1-p$ for all $p\in \bigl[3/4,1\bigr]$.

    Consequently, $\ed_{\hcomp}(p)=g_{\kpath{k}}(p)$ for $k\geq 2$ and for all $p\in\bigl(p_k,p_{k+1}\bigr]$ and $\ed_{\hcomp}(p)=1-p$ for all $p\in\bigl[3/4,1\bigr]$.
    \label{lem:kpath}
\end{lemma}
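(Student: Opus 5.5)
The plan is to solve the linear system for the optimal weights on $\kpath{k}$ explicitly, using the spectral data of Lemma~\ref{lem:Toeplitz}, and then read off both the $p$-core threshold and the comparison with $1-p$. Work with $P=\frac{2p-1}{1-p}$ and $G=\frac{1-p-g}{1-p}$ throughout, and write $\widehat{{\bf M}}=\widehat{{\bf M}}_{\kpath{k}}(P)$. Note that $p\mapsto P$ is increasing, $p=p_k$ corresponds to $P=2\cos\bigl(\frac{2\pi}{k+1}\bigr)$, and $p=3/4$ corresponds to $P=2$. Since ${\bf x}^T{\bf M}_K(p){\bf x}=(1-p)\bigl(1-{\bf x}^T\widehat{{\bf M}}{\bf x}\bigr)$ on the simplex, computing $g_{\kpath{k}}$ is the same as maximizing ${\bf x}^T\widehat{{\bf M}}{\bf x}$ over the simplex.

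\emph{Not $p$-core for $p\le p_k$.} Take ${\bf w}={\bf w}_2$ from Lemma~\ref{lem:Toeplitz}. Then ${\bf w}_2^T{\bf 1}=0$, so Lemma~\ref{lem:eigencore} says that a $p$-core $\kpath{k}$ must have $\lambda_2=2\cos\bigl(\frac{2\pi}{k+1}\bigr)-P<0$, that is $P>2\cos\bigl(\frac{2\pi}{k+1}\bigr)$, i.e.\ $p>p_k$. For $k=2$ there is no ${\bf w}_2$, but $p_2=0$, so there is nothing to prove.

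\emph{$p$-core on $(p_k,p_{k+1}]$.} On this range all eigenvalues satisfy $\lambda_a<0$ for $a\ge 2$ (they decrease in $a$). If $\lambda_1\neq 0$, then $\widehat{{\bf M}}$ is invertible, and the critical point on the hyperplane ${\bf x}^T{\bf 1}=1$ is
$${\bf x}=G\sum_{a}\frac{{\bf w}_a^T{\bf 1}}{\lambda_a}{\bf w}_a, \qquad G^{-1}=\sum_{a\ \text{odd}}\frac{({\bf w}_a^T{\bf 1})^2}{\lambda_a}.$$
Because $\widehat{{\bf M}}$ has at most one nonnegative eigenvalue, and that eigenvector is not orthogonal to ${\bf 1}$, the form is negative definite on ${\bf 1}^{\perp}$. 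Hence this critical point is the unique maximizer on the hyperplane, and it is the optimum on the simplex provided ${\bf x}>{\bf 0}$.

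To check positivity, I will not use the eigen-expansion. Instead I solve the recurrence $x_{b-1}-Px_b+x_{b+1}=G$ with $x_0=x_{k+1}=0$ directly. Writing $P=2\cos\theta$ with $\theta\in(0,\pi)$ (valid since $P<2$), the symmetric solution is
$$x_b=\frac{G}{2-P}\Bigl(1-\frac{\cos\bigl(\theta(b-\frac{k+1}{2})\bigr)}{\cos\bigl(\theta\frac{k+1}{2}\bigr)}\Bigr).$$
The condition $P\in\bigl(2\cos\frac{2\pi}{k+1},\,2\cos\frac{2\pi}{k+2}\bigr]$ translates into $\theta\in\bigl[\frac{2\pi}{k+2},\frac{2\pi}{k+1}\bigr)$, and from this window I will verify $x_b>0$ for every $b$, including the signs of $G$ and of $\cos\bigl(\theta\frac{k+1}{2}\bigr)$.

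For the sub-CRG condition, the argument goes as follows.
\begin{itemize}
\item Any proper sub-CRG has gray graph a disjoint union of shorter paths.
\item Blackening edges never lowers $g$, so it suffices to compare with the subpath $\kpath{j}$, $j<k$, and with its sub-CRGs.
\item Since the optimum of $\kpath{k}$ is unique and strictly positive, restricting to a proper vertex subset cannot attain the same value, which gives the strict inequality.
\end{itemize}
This positivity-and-uniqueness step is where I expect the main obstacle: handling the sign of $\cos\bigl(\theta\frac{k+1}{2}\bigr)$ near the endpoints of the window, and the degenerate case $\lambda_1=0$. For the latter I would use continuity of $g$ in $p$.

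\emph{The range $p\ge 3/4$.} Here $P\ge 2$, so every $\lambda_a=2\cos\bigl(\frac{a\pi}{k+1}\bigr)-P<0$ and $\widehat{{\bf M}}$ is negative definite. Then the maximum of ${\bf x}^T\widehat{{\bf M}}{\bf x}$ over the simplex is negative. This gives $G<0$, i.e.\ $g_{\kpath{k}}(p)>1-p$, and it holds whether or not the optimum is interior.

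\emph{The consequence.} By Lemma~\ref{lem:classify}, $\ed_{\hcomp}(p)$ is the minimum of $p/2$, of $g_{\kpath{j}}(p)$ for $j\ge 3$, and of $1-p$. Fix $p\in(p_k,p_{k+1}]$ with $p>1/2$.
\begin{itemize}
\item For $j>k$, the CRG $\kpath{j}$ is not $p$-core. Hence $g_{\kpath{j}}(p)=g_{K'}(p)$ for some proper sub-CRG $K'$, and by induction on $j$ this value is at least $g_{\kpath{k}}(p)$.
\item For $j<k$, the inequality $g_{\kpath{k}}(p)<g_{\kpath{j}}(p)$ follows from core-ness of $\kpath{k}$ on this range.
\item The comparison $g_{\kpath{k}}(p)\le 1-p$ for $p<3/4$ comes from Corollary~\ref{cor:graydeg}: delete all but one vertex of $\kpath{k}$ to get a single white vertex, and apply strict core-ness.
\end{itemize}
For $p\ge 3/4$, the bound $g_{\kpath{k}}(p)>1-p$ for every $k$, together with $g_{K(0,1)}(p)=1-p$, gives $\ed_{\hcomp}(p)=1-p$.
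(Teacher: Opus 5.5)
Your route differs from the paper's and can be completed, but two steps fail as written. Both are repaired by one fact that you planned to use only for positivity, namely $G>0$ on the window.

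\textbf{First gap: negative definiteness on ${\bf 1}^{\perp}$.} You claim that a single nonnegative eigenvalue, with eigenvector not orthogonal to ${\bf 1}$, forces the form to be negative definite on ${\bf 1}^{\perp}$. This is false. The matrix $\mathrm{diag}(2,-1)$ satisfies your hypothesis, yet its form equals $1$ at $(1,-1)\perp(1,1)$. Interlacing only says the compression to ${\bf 1}^{\perp}$ has at most one nonnegative eigenvalue.

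The correct criterion is as follows. Since $\widehat{{\bf M}}\x=G{\bf 1}$, the hyperplane ${\bf 1}^{\perp}$ is the $\widehat{{\bf M}}$-orthogonal complement of $\x$, and $\x^T\widehat{{\bf M}}\x=G$. By Sylvester's law of inertia, the form is negative definite on ${\bf 1}^{\perp}$ exactly when $G>0$. Without this, the critical point may be a saddle and the simplex optimum may lie on the boundary. So $G>0$ is needed for uniqueness itself, not just for positivity.

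It is easy to obtain. Summing your formula for $x_b$ gives
$$\frac{2-P}{G}=k-\frac{\sin(k\theta/2)}{\sin(\theta/2)\cos\bigl(\frac{k+1}{2}\theta\bigr)} .$$
Since $\frac{k+1}{2}\theta\in\bigl[\frac{(k+1)\pi}{k+2},\pi\bigr)$, that cosine is at most $-\cos\frac{\pi}{k+2}<0$, so the right side exceeds $k$. Combined with $\bigl|\theta\bigl(b-\frac{k+1}{2}\bigr)\bigr|<\frac{k+1}{2}\theta<\pi$, the same bound gives $x_b>0$. So the endpoint trouble you anticipated does not occur. Nor does $\lambda_1=0$: since $\frac{\pi}{k+1}<\frac{2\pi}{k+2}$, we have $\lambda_1>0$ on the whole window, and no continuity argument is needed.

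\textbf{Second gap: the comparison with $1-p$.} Deleting all but one vertex of $\kpath{k}$ leaves a single \emph{white} vertex. Its value is $p$, since ${\bf M}=[p]$, not $1-p$. Strict core-ness then gives only $g_{\kpath{k}}(p)<p$, which is useless for $p>1/2$. The CRG $K(0,1)$, which has value $1-p$, is not a sub-CRG of $\kpath{k}$. The correct argument is $g_{\kpath{k}}(p)=(1-p)(1-G)<1-p$, which is again exactly $G>0$.

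\textbf{Comparison with the paper.} With these repairs your argument is sound and genuinely different.
\begin{itemize}
\item For core-ness, the paper inducts on $k$ and reduces to $g_{\kpath{k}}(p)<g_{\kpath{k-1}}(p)$. It then evaluates $\frac{2-2\cos\theta}{G_{\kpath{k-1}}}-\frac{2-2\cos\theta}{G_{\kpath{k}}}=\tan\bigl(\frac{k+1}{2}\theta\bigr)\tan\bigl(\frac{k}{2}\theta\bigr)>0$. Your unique-positive-optimum argument handles every proper sub-CRG at once. It also never needs the closed form of $g_{\kpath{k-1}}$ outside its own core range.
\item For $p\ge 3/4$, you observe that $P\ge 2$ makes $\widehat{{\bf M}}$ negative definite, hence $G<0$. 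This is more direct than the paper's passage through the gray cycle $\widetilde{C}_\ell$.
\end{itemize}
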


The reader may note that we do not resolve whether $\kpath{k}$ is $p$-core for $p\in\bigl[3/4,1\bigr]$.
This is because we establish that $g_{\kpath{k}}(p)>1-p$ for $p$ in that range and so the CRGs $\kpath{k}$ are moot for the problem we are trying to solve. 

\begin{proof}
Let $k\geq 2$.
First, we prove that $K=\kpath{k}$ is not $p$-core for $p\in\bigl(0,p_k\bigr]$.
Let $P=\frac{2p-1}{1-p}$ and $G=G_K(P)=1-\frac{g_K(p)}{1-p}$. 
Lemma~\ref{lem:eigencore} gives that if $\kpath{k}$ is $p$-core then, 
\begin{align*} 
    \lambda_2   
    &<  \biggl(1-\frac{g_K(p)}{1-p}\biggr)\frac{\bigl({\bf w}^T{\bf 1}\bigr)^2}{\|{\bf w}\|_2^2} \\
    2\cos\Bigl(\frac{2\pi}{k+1}\Bigr)-P 
    &<  0 ,
\end{align*}
because Lemma~\ref{lem:Toeplitz} gives ${\bf w}^T{\bf 1}=0$.
Hence if $\kpath{k}$ is $p$-core, then $P>2\cos\bigl(\frac{2\pi}{k+1}\bigr)$ (equivalently $p>p_k$). 

Second, we prove that $\kpath{k}$ is $p$-core for $p\in\bigl(p_k,p_{k+1}\bigr]$ by induction on $k\geq 2$. 
For $k=2$, $\kpath{2}=K(2,0)$ is $p$-core for $p\in \bigl(0,1\bigr)=\bigl(p_2,1\bigr)\supset \bigl(p_2,p_3\bigr]=\bigl(p_2,1/2\bigr]$. 
Now suppose $k\geq 3$.

The deletion of any set of vertices of $\kpath{k}$ yields a sub-CRG for which the gray edges induce a subgraph consisting of disjoint paths. 
If such a sub-CRG $K'$ satisfies $g_{K'}(p)=g_{K}(p)$, then we can add gray edges to form a CRG $K''$ whose gray edges form a spanning path and for which $g_{K''}(p)\leq g_{K'}(p)=g_{K}(p)$. 
But since $K''$ is a sub-CRG of $K$, we may assume that $K'$ itself is $\kpath{\ell}$ for some $\ell\leq k$.
Moreover, by the inductive hypothesis, $\kpath{k-1}$ is $p$-core for $p\in\bigl(p_{k-1},p_k\bigr]$. 
Therefore, in order to verify $p$-coreness of $\kpath{k}$, it suffices to show that $g_{\kpath{k}}(p)<g_{\kpath{k-1}}(p)$ for all $p\in \bigl(p_k,p_{k+1}\bigr]$.
Observe that $p\in \bigl(p_k,p_{k+1}\bigr]$ implies that $P\in\Bigl(2\cos\bigl(\frac{2\pi}{k+1}\bigr), 2\cos\bigl(\frac{2\pi}{k+2}\bigr)\Bigr)$.

To that end, observe that the fact that $\widehat{{\bf M}}\x=G{\bf 1}$ implies the following recurrence relation:
\begin{align*}
    x_{a-1} - P x_a + x_{a+1}   &=  G,  \qquad \forall a\in\{1,\ldots,k\} \\
    x_0 = x_{k+1}   &= 0
\end{align*}
The solution to such a recurrence is of the form $x_a=C\bigl(A r_1^a + B r_2^a + 1)$, where \begin{align*}
    r_1     &=  \frac{P}{2}+\frac{i}{2}\sqrt{4-P^2} \qquad 
    & A     &=  -\frac{1-r_2^{k+1}}{r_1^{k+1}-r_2^{k+1}} \\
    r_2     &=  \frac{P}{2}-\frac{i}{2}\sqrt{4-P^2}     \qquad 
    & B     &=  \frac{1-r_1^{k+1}}{r_1^{k+1}-r_2^{k+1}} \\
    && C    &=  \frac{G}{2-P}
\end{align*} 

The roots $r_1$ and $r_2$ form a conjugate pair, so $r_1r_2=1$. 
Since $p\in\bigl(p_k,p_{k+1}\bigr]\subset\bigl(1/2,3/4\bigr)$, it follows that we may choose a $\theta\in\bigl(\frac{2\pi}{k+2},\frac{2\pi}{k+1}\bigr]\subset\bigl(0,\pi/2\bigr)$ such that $P=2\cos \theta$ and for any positive integer $a$, $r_1^a+r_2^a=2\cos (a\theta)$, and $r_1^a-r_2^a=2i\sin(a\theta)$, where $i=\sqrt{-1}$. 

First, we will compute $\x=\bigl(x_a\bigr)_{a=1}^k$ in terms of $P$ and $G$ and then use $\sum_{a=1}^kx_a=1$ to compute $G$.

Using the recurrence,
\begin{align*}
    x_a     
    &=  \frac{G}{2-P} \biggl[-\frac{1-r_2^{k+1}}{r_1^{k+1}-r_2^{k+1}} r_1^a + \frac{1-r_2^{k+1}}{r_1^{k+1}-r_1^{k+1}} r_2^a + 1\biggr] \\
    &=  \frac{G}{2-P} \biggl[1-\frac{r_1^a-r_2^a+r_1^{k+1-a}-r_2^{k+1-a}}{r_1^{k+1}-r_2^{k+1}}\biggr] \\
    &=  \frac{G}{2-P} \biggl[1-\frac{\sin\bigl(a\theta\bigr)+\sin\bigl((k+1-a)\theta\bigr)}{\sin\bigl((k+1)\theta\bigr)}\biggr].  
\end{align*}

We can use the trigonometric identities \eqref{eq:sin:sums}, \eqref{eq:sin:doubleangle},  \eqref{eq:cos:anglediff}, and \eqref{eq:tan:halfangle} which are listed in Appendix~\ref{sec:trig}, in order to simplify the expression. 
\begin{align}
    x_a     
    &=  \frac{G}{2-P} \biggl[1-\frac{\cos\bigl(\frac{k+1}{2}\theta\bigr)\cos\bigl(a\theta\bigr) + \sin\bigl(\frac{k+1}{2}\theta\bigr)\sin\bigl(a\theta\bigr)}{\cos\bigl(\frac{k+1}{2}\theta\bigr)}\biggr] \nonumber \\
    &=  \frac{G}{2-P} \biggl[1 - \cos\bigl(a\theta\bigr) - \sin\bigl(a\theta\bigr) \tan\bigl(\tfrac{k+1}{2}\theta\bigr)\biggr] \label{eq:hcomp:xa} \\
    &=  \frac{G}{2-P} \biggl[1 - \cos\bigl(a\theta\bigr) - \sin\bigl(a\theta\bigr) \frac{\sin\bigl((k+1)\theta\bigr)}{1+\cos\bigl((k+1)\theta)}\biggr]. \nonumber
\end{align}

Given $G$, we can compute the values of $x_a$ using the Chebyshev polynomials of the first and second kind, $\{T_n\}_{n\geq 0}$ and $\{U_n\}_{n\geq 0}$, respectively. 
This is done via the identities $\cos\bigl(n\theta\bigr)=T_n\bigl(\cos\theta\bigr)=T_n\bigl(P/2\bigr)$ and $\sin\bigl(n\theta\bigr)=\sin\theta\cdot U_{n-1}\bigl(\cos\theta\bigr)=\frac{1}{2}\sqrt{4-P^2}\cdot U_{n-1}\bigl(P/2\bigr)$.
See Appendix~\ref{sec:Cheb}.

Now we turn to computing $G$ itself.
Since $\sum_{a=1}^k x_a=1$,

\vspace{-1cm}

\begin{align*}
    1   
    &=  \sum_{a=1}^k C\bigl(Ar_1^a + Br_2^a +1\bigr) \\
    &=  \frac{G}{2-P}\biggl[-\frac{1-r_2^{k+1}}{r_1^{k+1}-r_2^{k+1}}\cdot\frac{r_1-r_1^{k+1}}{1-r_1} + \frac{1-r_1^{k+1}}{r_1^{k+1}-r_2^{k+1}}\cdot\frac{r_2-r_2^{k+1}}{1-r_2} + k\biggr] \\
    &=  \frac{G}{2-P}\biggl[\frac{-\bigl(1-r_2^{k+1}\bigr)\bigl(r_1-r_1^{k+1}\bigr)\bigl(1-r_2\bigr) + \bigl(1-r_1^{k+1}\bigr)\bigl(r_2-r_2^{k+1}\bigr)\bigl(1-r_1\bigr)}{
    \bigl(r_1^{k+1}-r_2^{k+1}\bigr)\bigl(1-r_1\bigr)\bigl(1-r_2\bigr)} + k\biggr] \\
    &=  \frac{G}{2-P}\biggl[\frac{4i\sin\bigl((k+1)\theta)-4i\sin\bigl(k\theta\bigr)-4i\sin\theta}{2i\sin\bigl((k+1)\theta\bigr)\bigl(2-2\cos\theta\bigr)} + k\biggr] \\
    &=  \frac{G}{2-P}\biggl[\frac{\sin\bigl((k+1)\theta)-\sin\bigl(k\theta\bigr)-\sin\theta}{\sin\bigl((k+1)\theta\bigr)\bigl(1-\cos\theta\bigr)} + k\biggr] \\
    G   &=  2\bigl(1-\cos\theta\bigr)^2\biggl[1-\frac{\sin\bigl(k\theta\bigr)+\sin\theta}{\sin\bigl((k+1)\theta\bigr)} + k\bigl(1-\cos\theta\bigr)\biggr]^{-1} .
\end{align*}
Again, we use the trigonometric identities \eqref{eq:sin:sums}, \eqref{eq:sin:doubleangle},  \eqref{eq:cos:anglediff}, and \eqref{eq:tan:halfangle} in Appendix~\ref{sec:trig}.

\vspace{-0.5cm}

\begin{align}
    G   
    &=  2\bigl(1-\cos\theta\bigr)^2\biggl[1-\frac{2\sin\bigl(\frac{k+1}{2}\theta\bigr)\cos\bigl(\frac{k-1}{2}\theta\bigr)}{2\sin\bigl(\frac{k+1}{2}\theta\bigr)\cos\bigl(\frac{k+1}{2}\theta\bigr)} + k\bigl(1-\cos\theta\bigr)\biggr]^{-1} \nonumber \\
    &=  2\bigl(1-\cos\theta\bigr)^2\biggl[1-\frac{\cos\bigl(\frac{k-1}{2}\theta\bigr)}{\cos\bigl(\frac{k+1}{2}\theta\bigr)} + k\bigl(1-\cos\theta\bigr)\biggr]^{-1} \nonumber \\
    &=  2\bigl(1-\cos\theta\bigr)^2\biggl[1-\frac{\cos\bigl(\frac{k-1}{2}\theta\bigr)}{\cos\bigl(\frac{k+1}{2}\theta\bigr)} + k\bigl(1-\cos\theta\bigr)\biggr]^{-1} \nonumber \\
    &=  2\bigl(1-\cos\theta\bigr)\biggl[\bigl(k+1\bigr)-\frac{\sin\theta}{1-\cos\theta}\cdot\tan\bigl(\tfrac{k+1}{2}\theta\bigr)\biggr]^{-1} \label{eq:hcomp:G} \\
    &=  2\bigl(1-\cos\theta\bigr)\biggl[\bigl(k+1\bigr)-\frac{\sin\theta}{1-\cos\theta}\cdot\frac{1-\cos\bigl((k+1)\theta\bigr)}{\sin\bigl((k+1)\theta\bigr)}\biggr]^{-1} \nonumber \\
    &=  \bigl(2-P\bigr)\biggl[\bigl(k+1\bigr)-\frac{2}{2-P}\cdot\frac{1-T_{k+1}\bigl(P/2\bigr)}{U_k\bigl(P/2\bigr)}\biggr]^{-1}. \nonumber 
\end{align}

See Appendix~\ref{sec:gfunc} for the values of $G_K(P)$ and $g_K(p)$ where $K=\kpath{k}$ and $p\in\bigl(p_k,p_{k+1}\bigr]$, as computed by Mathematica. 

Now we need to show that $\kpath{k}$ is, indeed $p$-core for $p\in\bigl(p_k,p_{k+1}\bigr)$ by computing $\frac{2-2\cos\theta}{G_{\kpath{k-1}}}-\frac{2-2\cos\theta}{G_{\kpath{k}}}$ using \eqref{eq:hcomp:G} and the trigonometric identities \eqref{eq:tan:diffs}, and \eqref{eq:tan:halfangle} listed in Appendix~\ref{sec:trig}.
Note that, since $k\geq 3$, then $\theta\in\bigl(\frac{2\pi}{k+2},\frac{2\pi}{k+1}\bigr]\subset\bigl(0,\pi/2\bigr)$. 
Turning to \eqref{eq:hcomp:G},
\begin{align}
    \frac{2-2\cos\theta}{G_{\kpath{k-1}}}-\frac{2-2\cos\theta}{G_{\kpath{k}}(P)} 
    &= \biggl[k-\frac{\sin\theta}{1-\cos\theta}\cdot\tan\Bigl(\frac{k}{2}\theta\Bigr)\biggr] - \biggl[k+1-\frac{\sin\theta}{1-\cos\theta}\cdot\tan\Bigl(\frac{k+1}{2}\theta\Bigr)\biggr] \nonumber \\
    &= -1+\frac{\sin\theta}{1-\cos\theta}\biggl[\tan\Bigl(\frac{k+1}{2}\theta\Bigr)-\tan\Bigl(\frac{k}{2}\theta\Bigr)\biggr] \nonumber \\
    &= -1+\frac{\sin\theta}{1-\cos\theta}\cdot\tan\Bigl(\frac{\theta}{2}\Bigr)\biggl[1+\tan\Bigl(\frac{k+1}{2}\theta\Bigr)\cdot\tan\Bigl(\frac{k}{2}\theta\Bigr)\biggr] \nonumber \\
    &= \tan\Bigl(\frac{k+1}{2}\theta\Bigr)\cdot\tan\Bigl(\frac{k}{2}\theta\Bigr) . \label{eq:GkGkm1}
\end{align}

Note that, $\sin\theta,\cos\theta,\tan\bigl(\tfrac{\theta}{2}\bigr)\in (0,1)$, thus
\begin{align*}
    \tfrac{k+1}{2}\theta \in \bigl(\pi-\tfrac{\pi}{k+2},\pi\bigr] ,
    \qquad \mbox{ and } \qquad
    \tfrac{k}{2}\theta \in \bigl(\pi-\tfrac{2\pi}{k+2},\pi-\tfrac{\pi}{k+1}\bigr] .
\end{align*}
Hence, for $k\geq 2$, $\tan\bigl(\tfrac{k}{2}\theta\bigr)<\tan\bigl(\tfrac{k+1}{2}\theta\bigr)\leq 0$ with equality if and only if $\theta=\frac{2\pi}{k+1}$ and so if $\theta\in\bigl(\frac{2\pi}{k+2}, \frac{2\pi}{k+1}\bigr)$, 
\begin{align*}
    \frac{2-2\cos\theta}{G_{\kpath{k-1}}}      &> \frac{2-2\cos\theta}{G_{\kpath{k}}} \\
    G_{\kpath{k-1}}                  &< G_{\kpath{k}} \\
    1-\frac{g_{\kpath{k-1}}(p)}{1-p}    &< 1-\frac{g_{\kpath{k}}(p)}{1-p} \\
    g_{\kpath{k-1}}(p)                  &> g_{\kpath{k}}(p) ,
\end{align*}
which establishes that $\kpath{k}$ is $p$-core for $p\in\bigl(p_k,p_{k+1}\bigr)$.

In fact, if one looks carefully, we establish that $\kpath{k}$ is $p$-core as long as $\frac{k}{2}\theta>\frac{\pi}{2}$, which is equivalent to $P\in \Bigl(2\cos\bigl(\frac{2\pi}{k+1}\bigr),2\cos\bigl(\frac{\pi}{k}\bigr)\Bigr)$, which is equivalent to $p\in\bigl(p_k,p_{2k-1}\bigr)$.

Third, let $p\in [3/4,1)$ and consider $K=\kpath{k}$.
As we discussed before, if $K$ is not $p$-core, then there is an $\ell<k$ such that $g_{\kpath{\ell}}(p)=g_{\kpath{k}}(p)$. 
But by changing one edge from black to gray, we get a CRG that consists of white vertices, and all black edges except for a spanning cycle induced by gray edges, denote it $\widetilde{C}_{\ell}$. 
Thus $g_{\widetilde{C}_{\ell}}(p)<g_{\kpath{k}}(p)$. 

Since $\x=\frac{1}{\ell}{\bf 1}$ gives that $M_{\widetilde{C}_{\ell}}(p)\cdot\x$ is a constant vector and $\widetilde{C}_{\ell}$ is $p$-core, then by Theorem~\ref{thm:graydeg}, it is the unique vector satisfying
$$ g_{\widetilde{C}_{\ell}}(p) = \frac{p}{\ell}+(\ell-3)\frac{1-p}{\ell} = (1-p) + \frac{4p-3}{\ell} \geq 1-p, $$
which establishes that $g_{\kpath{k}}(p)>1-p$ for $p\in [3/4,1)$.  

Fourth, we observe that since $\ed_{\hcomp}(p)$ is continuous and concave down, and that $g_{\kpath{4}}(p)=\frac{-1+3p-p^2}{2(-1+3p)}$ is increasing for $p\in \bigl(p_4,p_5\bigr]=\bigl(\frac{\sqrt{5}-1}{2},\frac{2}{3}\bigr]$ and $g_{\kpath{5}}(p)=\frac{-2+4p-p^3}{-4+6p+3p^2}$ is decreasing for $p\in \bigl(p_5,p_6\bigr]\supset\bigl(\frac{2}{3},0.692\bigr]$.
Hence, $\ed_{\hcomp}(p)$ achieves its maximum at $p=2/3$ and that maximum is $\frac{5}{18}$.

Fifth, note that $g_{\kpath{4}}(p)\leq\frac{5}{18}-\frac{1}{2}\bigl(\frac{2}{3}-p\bigr)^2$ for all $p\in\bigl(p_4,2/3\bigr]$ and $g_{\kpath{5}}(p)\leq \frac{5}{18}-\frac{1}{12}\bigl(p-\frac{2}{3}\bigr)$ for all $p\in \bigl(2/3,p_6\bigr]$.
Thus, if $g>5/18-\epsilon$ and $\epsilon<0.001<\min\Bigl\{\frac{1}{2}\bigl(\frac{2}{3}-p_4\bigr)^2, \frac{1}{12}\bigl(p_6-\frac{2}{3}\bigr)\Bigr\}$, then
$$
    \frac{2}{3}-\sqrt{2\epsilon} < p < \frac{2}{3}+12\epsilon .
$$

This completes the proof of Lemma~\ref{lem:kpath}.
\end{proof}

Furthermore, this also completes the proof of Theorem~\ref{thm:comp}, which establishes that $\ed_{\hcomp}(p)$ is equal to $p/2$ for $p\in [0,1/2]$, is equal to $g_{\kpath{k}}(p)$ for $p\in [p_k,p_{k+1}]$ for all $k\geq 3$, and is equal to $1-p$ for $p\in [3/4,1]$. 

\section{Remarks}

\subsection{Transitions between CRGs in $\hword$}

In the case of $\ed_{\hword}(p)=\min\bigl\{p/3,1-p\bigr\}$ if $p\in (0,3/4)\cup (3/4,1)$, then there is a single $p$-core CRG $K$ for which $\ed_{\hword}(p)=g_K(p)$.
At $p=3/4$, however, there is an infinite number of CRGs for which $\ed_{\hword}(3/4)=1/4=g_K(3/4)$. 
In particular, consider a CRG $K$ on $k$ vertices such that all vertices are white, all edges are black and gray and the gray edges of $K$ induce a spanning cycle, then $g_K(p)=1-p+\frac{4p-3}{k}$ for $p\in [3/4,1]$ and $g_K(3/4)=1/4$. 

\subsection{Transitions between CRGs in $\hcomp$}
In the case of $g_{\hcomp}(p)$, we need some definitions from Cox, McGinnis and the second author~\cite{CMM}. 
Given a nontrivial herediary property $\HH$, we say that $p\in [0,1]$ is a \emph{regular point} if there is an $\epsilon>0$ and a finite set of CRGs $\KK'\subseteq\KK(\HH)$ such that 
$$ \ed_{\HH}(p) = \min_{K\in\KK} g_K(p), \qquad \mbox{for every $p\in\bigl(p_0-\epsilon,p_0+\epsilon\bigr)\cap \bigl[0,1\bigr]$.} $$
Otherwise, $p$ is called an \emph{accumulation point}. 

There are many examples of hereditary properties for which $0$ is an accumulation point such as $\forb\bigl(K_{3,3}\bigr)$ found by Marchant and Thomason~\cite{MT}. 
Observe that if $p$ is an accumulation point for $\HH=\bigcap_{H\in\FF(\HH)}\forb\bigl(H\bigr)$, then $1-p$ is an accumulation point for $\bigcap_{H\in\FF(\HH)}\forb\bigl(\overline{H}\bigr)$.

The question of whether a number other than $0$ or $1$ could be an accumulation point was established in the affirmative in~\cite[Theorem 1.5]{CMM} where it was shown that $1/4$ is an accumulation point of the hereditary property $\forb\bigl(\bigl\{K_{1,4},C_5,C_6,C_7,\ldots\bigr\}\bigr)$. 
This is very similar to the the complement of $\hcomp$, that is, it is similar to $\forb\bigl(\bigl\{\overline{F_1'},\overline{F_2'},C_5,C_6,C_7,\ldots\bigr\}\bigr)$ which has an accumulation point of $1/4$ also. 

The second author and Riasanovsky~\cite[Theorem 39]{MR} established that there are no accumulation points in $\bigl(2-\varphi,\varphi-1\bigr)=\bigl(0.382\ldots,0.618\ldots\bigr)$, where $\varphi$ is the golden ratio.
This leads to several questions and conjectures.

\begin{question}[\cite{CMM}, Question 5.2]
    Suppose that $\HH = \forb(\FF)$ where $\FF$ is finite. Can $\HH$ have any accumulation points in $\bigl(0, 1\bigr)$?
\end{question}

\begin{conjecture}[\cite{CMM}, Conjecture 5.3]
    For any $s, t \in \mathbb{N}$, $\forb\bigl(K_{s,t}\bigr)$ has no accumulation points in the interval $\bigl(0, 1\bigr]$.
\end{conjecture}

\begin{conjecture}[\cite{CMM}, Conjecture 5.4]
    Any non-trivial hereditary property has only finitely many accumulation points.
\end{conjecture}

\begin{conjecture}[\cite{CMM}, Conjecture 5.5]
    For any non-trivial hereditary property, either $\bigl(0, 1/2\bigr]$ or $\bigl[1/2, 1\bigr)$ is free of accumulation points.
\end{conjecture}

\begin{question}
    Does there exist a $p\in\bigl(1/4,2-\varphi\bigr]\cup\bigl[\varphi-1,3/4\bigr)$ and a hereditary property $\HH$ such that $p$ is an accumulation point of $\HH$?
\end{question}

\subsection{Transformation of the weight vector}

For each of the hereditary properties $\hword$, $\hkword{k}$ and $\hcomp$, we can compute the optimal weight vector $\x$ for each CRG. 
The cases of $\hword$ and $\hkword{k}$ are trivial. 
In the case of $\hword$, the vector is $\frac{1}{3}\bigl[1,1,1\bigr]$ for $p\in \bigl(0,3/4\bigr)$ and $\bigl[1\bigr]$ for $p\in\bigl(3/4,1\bigr)$.
In the case of $\hkword{k}$ for $k\geq 2$, the vector is $\bigl[1\bigr]$ for $p\in\bigl(0,1/2\bigr)\cup\bigl(1/2,1\bigr)$.
The most interesting case is $\hcomp$.

First, we note that a direct calculation and simple trigonometric identities applied to \eqref{eq:hcomp:xa} which we leave to the reader, demonstrate that the sequence $x_1,x_2,\ldots,x_k$ is symmmetric and unimodal.

Next, we address the optimal weight vectors that occur at $p=p_k$. 
Recall that $\ed_{\hcomp}(p)$, for $p\in (0,3/4)$, is formed by $\bigl\{g_{\kpath{2}}(p),g_{\kpath{3}}(p),g_{\kpath{4}}(p),\ldots\bigr\}$.
The transition from $g_{\kpath{k-1}}(p)$ to $g_{\kpath{k}}(p)$ occurs at $p=p_k$, where $p_k=\frac{2\cos\theta+1}{2\cos\theta+2}$ for $\theta=\frac{2\pi}{k+1}$. 
It will be convenient to use the notation $P_k=2\cos\theta=\frac{2p_k-1}{1-p_k}$ and to let $G_k=G_{\kpath{k-1}}\bigl(p_k\bigr)=G_{\kpath{k}}\bigl(p_k\bigr)=1-\frac{g_{\kpath{k}}(p_k)}{1-p_k}$.
By the continuity of $\ed_{\hcomp}(p)$, the values of $g_{\kpath{k-1}}(p)$ and $g_{\kpath{k}}(p)$ are the same at $p=p_k$.
In order to compute ~\eqref{eq:hcomp:G}, we see that $\tan\bigl(\frac{k+1}{2}\theta\bigr) = \tan\pi = 0$ and so 
\begin{align*}
    G_k                             = \frac{2-P_k}{k+1} \qquad\Longleftrightarrow\qquad
    g_{\kpath{k}}\bigl(p_k\bigr)    = 1-p_k-\frac{3-4p_k}{k+1} .
\end{align*}

Now to the values of $\bigl[x_a\bigr]_{a=1}^{k-1}$ for $\kpath{k-1}$. 
Returning to~\eqref{eq:hcomp:xa},
\begin{align*}
    x_a     &=  \frac{G_k}{2-P_k}\Bigl[1-\cos\bigl(a\theta\bigr)-\sin\bigl(a\theta\bigr)\tan\bigl(\tfrac{k}{2}\theta\bigr)\Bigr] \\
            &=  \frac{1}{k+1}\Bigl[1-\cos\bigl(a\theta\bigr)-\sin\bigl(a\theta\bigr)\tan\bigl(\pi-\tfrac{\pi}{k}\bigr)\Bigr] \\
            &=  \frac{1}{k+1}\Bigl[1-\cos\bigl(a\theta\bigr)+\sin\bigl(a\theta\bigr)\tan\bigl(\theta/2\bigr)\Bigr] \\
            &=  \frac{1}{k+1}\Bigl[1-\cos\bigl(a\theta\bigr)+\sin\bigl(a\theta\bigr)\frac{1-\cos\theta}{\sin\theta}\Bigr] \\
    x_1=x_{k-1}     &=  \frac{2-2\cos\theta}{k+1} = \frac{2-P_k}{k+1} ,
\end{align*}
which happens to be $G_k$.

Now consider the optimal vector $\bigl[y_a\bigr]_{a=1}^k$ for $\kpath{k}$ and $p\in\bigl(p_k,p_{k+1}\bigr]$, but taking the limit as 
$p\rightarrow p_k^{+}$, we can use 
\begin{align*}
    y_a     &=  \frac{G_k}{2-P_k}\Bigl[1-\cos\bigl(b\theta\bigr)-\sin\bigl(b\theta\bigr)\tan\bigl(\tfrac{k+1}{2}\theta\bigr)\Bigr] \\
            &=  \frac{1}{k+1}\Bigl[1-\cos\bigl(b\theta\bigr)\Bigr] \\
    y_1=y_k     &=  \frac{1-\cos\theta}{k+1} = \frac{2-P_k}{2(k+1)} .
\end{align*}
In fact, by applying trigonometric identities, the reader can see that $y_a=\frac{1}{2}\bigl(x_{a-1}+x_a\bigr)$, as long as $x_0=0$.
In other words, $\y=\frac{1}{2}\bigl[x_1,x_2,\ldots,x_{k-1},0\bigr]+\frac{1}{2}\bigl[0,x_1,x_2,\ldots,x_{k-1}\bigr]$.

As a special note, we observe that the optimal vectors at $p=p_5=2/3$, are $\bigl[\frac{1}{6},\frac{1}{3},\frac{1}{3},\frac{1}{6}\bigr]$ and $\bigl[\frac{1}{12},\frac{1}{4},\frac{1}{3},\frac{1}{4},\frac{1}{12}\bigr]$.

\subsection{Simplified hereditary property with the same edit distance function.}

Recall that our proof of Theorem~\ref{thm:comp} used the fact that
$$
    \hcomp\subset\forb\bigl(\bigl\{F_1',F_2',C_5=\overline{C_5},\overline{C_6},\overline{C_7},\ldots\bigr\}\bigr) .
$$

In fact, we can use a weaker property to obtain the same edit distance function for $\hcomp$ by observing that 
$$
    \hcomp\subset\forb\bigl\{F_1',F_2',\overline{C_6},\overline{C_{14}},\overline{C_{30}},\ldots,\overline{C_{2^\ell-2}},\ldots\bigr\} .
$$
The same CRGs are forbidden from the hereditary property on the right-hand side because, as long as $\bigl\lceil j/2\bigr\rceil\leq k\leq j$, the graph $\overline{C_j}$ embeds in a CRG with all $k$ vertices white and all edges black, except for a gray spanning $k$-cycle. 

\subsection{$p$-core classification}

In the proof of Lemma~\ref{lem:kpath}, we showed that $\kpath{k}$ is $p$-core for $p\in\bigl(p_k,p_{2k-1}\bigr)$.
This was more than we needed for Theorem~\ref{thm:comp}. 
We ask whether the property of being $p$-core is monotone in $p$.
\begin{question}
    If $K$ is $p_0$-core for some $p_0>1/2$, then is it $p$-core for all $p\in\bigl[p_0,1\bigr)$? 
    Equivalently, if $K$ is $p_0$-core for some $p_0<1/2$, then is it $p$-core for all $p\in \bigl(p,p_0\bigr]$?
\end{question}

We do believe this is true in the special case of $\kpath{k}$.
\begin{conjecture}
    For all $k\geq 3$, the CRG $\kpath{k}$ is $p$-core for all $p\in\bigl(p_k,1\bigr)$. 
\end{conjecture}

\subsection{Edit distance for oriented graphs}
An orientation of a graph is \emph{semi-transitive} if it is acyclic (i.e., contains no directed cycles), and for any directed path $v_0 \rightarrow v_1 \rightarrow \cdots \rightarrow v_k$, either there is no edge between $v_0$ and $v_k$, or $v_i \rightarrow v_j$ is an edge for all $0 \leq i < j \leq k$. A fundamental result in the theory of word-representable graphs by Halld\'{o}rsson, Pyatkin, and the first author~\cite{HKP} states that a graph is word-representable if and only if it admits a semi-transitive orientation.

In the case of $\hword$, one may ask about the maximum edit distance of an oriented graph from one admitting a semi-transitive orientation or, in the case of $\hcomp$, the maximum edit distance of an oriented graph from one admitting a transitive orientation. This was addressed to some extent by Axenovich and the second author~\cite{AM}, although there is little other literature on the edit distance of oriented graphs, and the interested reader may wish to consider alternative measures of edit distance.

\section{Conclusion}

The second author's research is partially supported by a Simons Collaboration Grant for Mathematicians, \#709641.
The second author's research is also partially supported by a Fulbright Scholarship at the University of Birmingham, UK. 
Special thanks to the University of Birmingham and the US-UK Fulbright Commission.

\appendix

\section{Chebyshev polynomials}
\label{sec:Cheb}
The Chebyshev polynomials of the first kind (see~\cite{MathWorldT}) obey the identity $\cos\bigl(n\theta\bigr)=T_{n}\bigl(\cos\theta\bigr)$.
The Chebyshev polynomials of the second kind (see~\cite{MathWorldU}) obey the identity $\sin\bigl(n\theta\bigr)=\sin\theta\cdot U_{n-1}\bigl(\cos\theta\bigr)$. \\

\begin{center}
    \renewcommand{\arraystretch}{1.1}
    \begin{tabular}{||rcl|rcl||} \hline
        \multicolumn{6}{||c||}{Chebyshev polynomials} \\ \hline
        \multicolumn{3}{||l|}{$T_n(x)$: the first kind} & \multicolumn{3}{l||}{$U_n(x)$: the second kind} \\ [1pt] \hline
        $T_0(x)$ & $=$ & $1$ & $U_0(x)$ & $=$ & $1$ \\ [1pt] \hline
        $T_1(x)$ & $=$ & $x$ & $U_1(x)$ & $=$ & $2x$ \\ [1pt] \hline
        $T_2(x)$ & $=$ & $2x^2-1$ & $U_2(x)$ & $=$ & $4x^2-1$ \\ [1pt] \hline
        $T_3(x)$ & $=$ & $4x^3-3x$ & $U_3(x)$ & $=$ & $8x^3-4x$ \\ [1pt] \hline
        $T_4(x)$ & $=$ & $8x^4-8x^2+1$ & $U_4(x)$ & $=$ & $16x^4-12x^2+1$ \\ [1pt] \hline
        $T_5(x)$ & $=$ & $16x^5-20x^3+5x$ & $U_5(x)$ & $=$ & $32x^5-32x^3+6x$ \\ [1pt] \hline
        $T_6(x)$ & $=$ & $32x^6-48x^4+18x^2-1$ & $U_6(x)$ & $=$ & $64x^6-80x^4+24x^2-1$ \\ [1pt] \hline
    \end{tabular}~\\
\end{center}

\section{Functions that define $\ed_{\hcomp}(p)$}
\label{sec:gfunc}




Here we show values of $g_{\kpath{k}}(p)$ for small values of $k$. \\
\begin{center}
    \renewcommand{\arraystretch}{1.5}
    \begin{tabular}{||rcl|rcl||} \hline
        \multicolumn{3}{||c|}{$g$ in terms of $p$} 
        & \multicolumn{3}{c||}{$p_k$} \\ \hline
        $g_{\kpath{2}}(p)$ & $=$ & $\ds \frac{p}{2}$ 
        & $p_2$ & $=$ & $\ds 0$ \\ [5pt] \hline 
        $g_{\kpath{3}}(p)$ & $=$ & $\ds \frac{p}{1+2p}$ 
        & $p_3$ & $=$ & $\ds \frac{1}{2}$ \\ [5pt] \hline 
        $g_{\kpath{4}}(p)$ & $=$ & $\ds \frac{-1+3p-p^2}{2\bigl(-1+3p\bigr)}$ 
        & $p_4$ & $=$ & $\ds \frac{\sqrt{5}-1}{2}$ \\ [5pt] \hline 
        $g_{\kpath{5}}(p)$ & $=$ & $\ds \frac{-2+4p-p^3}{-4+6p+3p^2}$ 
        & $p_5$ & $=$ & $\ds \frac{2}{3}$ \\ [5pt] \hline 
        $g_{\kpath{6}}(p)$ & $=$ & $\ds \frac{-1-2p+9p^2-5p^3}{2\bigl(-1-2p+6p^2\bigr)}$ 
        & $p_6$ & $\approx$ & $\ds 0.692021$ \\ [5pt] \hline 
        $g_{\kpath{7}}(p)$ & $=$ & $\ds \frac{-2+13p-20p^2+6p^3+2p^4}{-3+18p-18p^2-4p^3}$ 
        & $p_7$ & $=$ & $\ds \frac{\sqrt{2}}{2}$ \\ [5pt] \hline 
        $g_{\kpath{8}}(p)$ & $=$ & $\ds \frac{-4+15p-9p^2-14p^3+11p^4}{2\bigl(-3+9p-10p^3\bigr)}$ 
        & $p_8$ & $\approx$ & $\ds 0.716881$ \\ [5pt] \hline 
        $g_{\kpath{9}}(p)$ & $=$ & $\ds \frac{-2-3p+36p^2-55p^3+20p^4+3p^5}{-3-6p+45p^2-40p^3-5p^4}$ 
        & $p_9$ & $\approx$ & $\ds 0.723607$ \\ [5pt] \hline 
        $g_{\kpath{{10}}}(p)$ & $=$ & $\ds \frac{-3+29p-75p^2+59p^3+10p^4-19p^5}{-4+36p-72p^2+20p^3+30p^4}$ 
        & $p_{10}$ & $\approx$ & $\ds 0.728446$ \\ [5pt] \hline 
    \end{tabular}~\\
\end{center}

Next, we show the optimal $\x$ vector for the CRGs $\kpath{k}$.
\begin{center}
    \renewcommand{\arraystretch}{1.5}
    \begin{tabular}{|r|l||} \hline
        CRG & 
        $\x$ in terms of $p$ \\ \hline
        $\kpath{2}$ 
        & $\frac{1}{2}\bigl[1,1\bigr]$ \\ \hline 
        $\kpath{3}$ 
        & $\frac{1}{1+2p}\bigl[p,1,p\bigr]$ \\  \hline 
        $\kpath{4}$
        & $\frac{1}{2(-1+3p)}\bigl[-1+2p,p,p,-1+2p\bigr]$ \\  \hline 
        $\kpath{5}$
        & $\frac{1}{-4+6p+3p^2}\bigl[-1+p+p^2,-1+2p,p^2,-1+2p,-1+p+p^2\bigr]$ \\  \hline 
        $\kpath{6}$
        & $\frac{1}{2(-1-2p+6p^2)}\bigl[p(-2+3p),-1+p+p^2,p(-1+2p),p(-1+2p),-1+p+p^2,p(-2+3p)\bigr]$ \\  \hline 
    \end{tabular}~\\
\end{center}

\section{Useful trigonometric identities for Lemma~\ref{lem:kpath}}
\label{sec:trig}

Here we include the trigonometric identities we use in the proof of  Lemma~\ref{lem:kpath}.
\begin{align}
    \sin a + \sin b         &=  2\sin\bigl(\tfrac{a+b}{2}\bigr)\cos\bigl(\tfrac{a-b}{2}\bigr) . \label{eq:sin:sums} \\
    \sin a                  &=  2\sin\bigl(\tfrac{a}{2}\bigr)\cos\bigl(\tfrac{a}{2}\bigr) . \label{eq:sin:doubleangle} \\
    \cos\bigl(a-b\bigr)     &=  \cos a\cos b+\sin a\sin b . \label{eq:cos:anglediff} \\
    \tan\Bigl(\frac{a}{2}\Bigr)    &=  \sqrt{\frac{1-\cos a}{1+\cos a}} = \frac{1-\cos a}{\sin a} = \frac{\sin a}{1+\cos a} , && \mbox{ if $a\in (0,\pi).$} \label{eq:tan:halfangle} \\
    \tan a-\tan b           &=  \tan\bigl(a-b\bigr)\bigl[1-\tan a\tan b\bigr] . \label{eq:tan:diffs} \\
    \sin\bigl(a-b\bigr)     &=  \sin a\cos b-\cos a\sin b . \label{eq:sin:anglediff}
\end{align}

\section{Eigenvalues and eigenvectors of a symmetric tridiagonal Toeplitz matrix}
\label{sec:Toeplitz}

We want to compute the eigenvalues and eigenvectors of the matrix $\widehat{\bf M}_{\kpath{k}}(P)$ which is defined to be
$$ \bigl(\widetilde{m}_{\kpath{k}}(P)\bigr)_{ab} 
    =   \begin{cases}
            -P,     &   \mbox{if $a=b$;} \\
            1,      &   \mbox{if $|a-b|=1$; and} \\
            0,      &   \mbox{otherwise.}
        \end{cases} $$

The eigenvalues are $\lambda_a=2\cos\bigl(\frac{a\pi}{k+1}\bigr)-P$ and the eigenvector corresponding to $\lambda_a$ is ${\bf w}_a$ where
$$ \bigl(w_a\bigr)_b = \sqrt{\frac{2}{k+1}} \sin\biggl(\frac{a\pi}{k+1}\cdot b\biggr) . $$

Proposition~\ref{prop:norm} establishes that each ${\bf w}_a$ is a unit vector and establishes that ${\bf w}_a^T{\bf 1}=0$ if $a$ is even and that ${\bf w}_a^T{\bf 1}=\sqrt{\frac{2}{k+1}}\, \frac{1+\cos\bigl(\frac{a\pi}{k+1}\bigr)}{\sin\bigl(\frac{a\pi}{k+1}\bigr)}$ if $a$ is odd.
\begin{proposition}
    For any integers $k$ and $a$, 
    \begin{align*}
        \sum_{b=1}^k\sin^2\Bigl(\frac{a\pi}{k+1}\cdot b\Bigr)   
        &=  (k+1)/2 \qquad \mbox{and} \\
        \sum_{b=1}^k\sin\Bigl(\frac{a\pi}{k+1}\cdot b\Bigr)     
        &=  \begin{cases}
                0,  &\mbox{ if $a$ is even;} \\
                \frac{1+\cos\bigl(\frac{a\pi}{k+1}\bigr)}{\sin\bigl(\frac{a\pi}{k+1}\bigr)},    &\mbox{ if $a$ is odd.}
            \end{cases}
    \end{align*}
    \label{prop:norm}
\end{proposition}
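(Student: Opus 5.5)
Throughout, I set $\phi=\frac{a\pi}{k+1}$ and assume, as is needed for the statement to be meaningful and as in its use for $a\in\{1,\ldots,k\}$, that $a$ is not a multiple of $k+1$. Then $e^{i\phi}\neq 1$, $e^{2i\phi}\neq 1$, $\sin(\phi/2)\neq 0$ and $\sin\phi\neq 0$. The plan is to turn both sums into finite geometric series in $e^{i\phi}$ and then use the fact that $(k+1)\phi=a\pi$.

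For the first identity, I will write $\sin^2(b\phi)=\frac{1}{2}\bigl(1-\cos(2b\phi)\bigr)$. Then
$$\sum_{b=1}^k\sin^2(b\phi)=\frac{k}{2}-\frac{1}{2}\sum_{b=1}^k\cos(2b\phi).$$
Next, $\sum_{b=0}^{k}e^{2ib\phi}=\frac{1-e^{2i(k+1)\phi}}{1-e^{2i\phi}}=0$, because $e^{2i(k+1)\phi}=e^{2ia\pi}=1$ while $e^{2i\phi}\neq 1$. Taking real parts gives $\sum_{b=0}^k\cos(2b\phi)=0$, so $\sum_{b=1}^k\cos(2b\phi)=-1$. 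Substituting, the first sum equals $\frac{k}{2}+\frac{1}{2}=\frac{k+1}{2}$.

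For the second identity, I take imaginary parts of $\sum_{b=1}^k e^{ib\phi}$ and factor out half-angles in the usual way. This gives the standard closed form
$$\sum_{b=1}^k\sin(b\phi)=\frac{\sin\bigl(\tfrac{k\phi}{2}\bigr)\sin\bigl(\tfrac{(k+1)\phi}{2}\bigr)}{\sin\bigl(\tfrac{\phi}{2}\bigr)}.$$
Here $\frac{(k+1)\phi}{2}=\frac{a\pi}{2}$, so the factor $\sin\bigl(\tfrac{(k+1)\phi}{2}\bigr)$ vanishes when $a$ is even, which gives the sum $0$.

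When $a$ is odd, I will use $\frac{k\phi}{2}=\frac{a\pi}{2}-\frac{\phi}{2}$. Since $\cos(a\pi/2)=0$, identity~\eqref{eq:sin:anglediff} gives $\sin\bigl(\tfrac{k\phi}{2}\bigr)=\sin\bigl(\tfrac{a\pi}{2}\bigr)\cos\bigl(\tfrac{\phi}{2}\bigr)$. The numerator then carries the factor $\sin^2(a\pi/2)=1$, so the sum equals $\cot(\phi/2)$. Finally, the identity $\cot(\phi/2)=\frac{1+\cos\phi}{\sin\phi}$, which follows from \eqref{eq:sin:doubleangle} and $1+\cos\phi=2\cos^2(\phi/2)$ and is valid whenever $\sin\phi\neq 0$, gives the stated value.

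The only real obstacle is bookkeeping. One must avoid relying on the principal-range restriction in \eqref{eq:tan:halfangle}, since $\phi$ may exceed $\pi$ for larger $a$; this is why I derive the cotangent identity directly. One must also carry the sign $\sin(a\pi/2)=\pm1$ correctly, and it drops out because it appears squared.
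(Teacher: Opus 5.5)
Your proof is correct and takes essentially the same route as the paper. Both arguments reduce each sum to a finite geometric series in $e^{i\phi}$ and evaluate it using $(k+1)\phi=a\pi$. For the second sum, the paper substitutes $e^{i(k+1)\phi}=(-1)^a$ directly into the series to get $\frac{\sin\phi}{1-\cos\phi}$, whereas you go through the product formula and $\cot(\phi/2)$; this difference is only cosmetic.

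Your explicit assumption that $a$ is not a multiple of $k+1$ is genuinely needed. If it fails, every term vanishes and the first identity is false. The paper's proof uses this assumption tacitly when it divides by $1-e^{2a\pi i/(k+1)}$.
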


\begin{proof}
    Observe that $\sin 0=0$ and so we can start the sums at $b=0$. 
    \begin{align*}
        \sum_{b=0}^k\sin^2\Bigl(\frac{a\pi}{k+1}\cdot b\Bigr) 
        &=  \frac{k+1}{2} - \sum_{b=0}^k \left[\exp\Bigl\{\frac{2a\pi i}{k+1}\cdot b\Bigr\}+\exp\Bigl\{-\frac{2a\pi i}{k+1}\cdot b\Bigr\}\right] \\
        &=  \frac{k+1}{2} - \sum_{b=0}^k \left[\frac{1-\exp\bigl\{\frac{2a\pi i}{k+1}\cdot (k+1)\bigr\}}{1-\exp\bigl\{\frac{2a\pi i}{k+1}\bigr\}}+\frac{1-\exp\bigl\{-\frac{2a\pi i}{k+1}\cdot (k+1)\bigr\}}{1-\exp\bigl\{-\frac{2a\pi i}{k+1}\bigr\}}\right] \\
        &=  \frac{k+1}{2} .
    \end{align*}
    \begin{align*}
        \sum_{b=0}^k\sin\Bigl(\frac{a\pi}{k+1}\cdot b\Bigr) 
        &=  \frac{1}{2i}\sum_{b=0}^k \left[\exp\Bigl\{\frac{a\pi i}{k+1}\cdot b\Bigr\}-\exp\Bigl\{-\frac{a\pi i}{k+1}\cdot b\Bigr\}\right] \\
        &=  \frac{1}{2i}\left[\frac{1-\exp\{a\pi i\}}{1-\exp\bigl\{\frac{a\pi i}{k+1}\bigr\}}-\frac{1-\exp\{-a\pi i\}}{1-\exp\bigl\{-\frac{a\pi i}{k+1}\bigr\}}\right] .
    \end{align*}
    If $a$ is even, then $\exp\{a\pi i\}=\exp\{-a\pi i\}=1$ and $\sum_{b=0}^k\sin\Bigl(\frac{a\pi}{k+1}\cdot b\Bigr)=0$. 
    If $a$ is even, then $\exp\{a\pi i\}=\exp\{-a\pi i\}=-1$ and
    \begin{align*}
        \sum_{b=0}^k\sin\Bigl(\frac{a\pi}{k+1}\cdot b\Bigr) 
        &=  \frac{1}{2i}\left[\frac{2}{1-\exp\bigl\{\frac{a\pi i}{k+1}\bigr\}}-\frac{2}{1-\exp\bigl\{-\frac{a\pi i}{k+1}\bigr\}}\right] \\
        &=  \frac{1}{i}\left[\frac{\exp\bigl\{\frac{a\pi i}{k+1}\bigr\}-\exp\bigl\{\frac{-a\pi i}{k+1}\bigr\}}{\bigl(1-\exp\bigl\{\frac{a\pi i}{k+1}\bigr\}\bigr)\bigl(1-\exp\bigl\{-\frac{a\pi i}{k+1}\bigr\}\bigr)}\right] \\
        &=  \frac{\sin\bigl(\frac{a\pi}{k+1}\bigr)}{1-\cos\bigl(\frac{a\pi}{k+1}\bigr)}
        =  \frac{1+\cos\bigl(\frac{a\pi}{k+1}\bigr)}{\sin\bigl(\frac{a\pi}{k+1}\bigr)} .
    \end{align*}
\end{proof}

To verify the eigenvectors, let us compute $\widehat{\bf M}_{\kpath{k}}(P)\cdot{\bf w}_a$.
For any $b\in\{1,\ldots,k\}$
\begin{align*}
    \Bigl(\widehat{\bf M}_{\kpath{k}}(P)\cdot{\bf w}_a\Bigr)_b 
    &= \sqrt{\frac{2}{k+1}} \left[\sin\Bigl(\frac{a\pi}{k+1}\,(b-1)\Bigr) - P \sin\Bigl(\frac{a\pi}{k+1}\, b\Bigr) + \sin\Bigl(\frac{a\pi}{k+1}\,(b+1)\Bigr)\right] \\
    &=  \left[\frac{\sin\bigl(\frac{a\pi}{k+1}\,b - \frac{a\pi}{k+1}\bigr) + \sin\bigl(\frac{a\pi}{k+1}\,b + \frac{a\pi}{k+1}\bigr)}{\sin\bigl(\frac{a\pi}{k+1}\, b\bigr)} - P\right] \sqrt{\frac{2}{k+1}} \sin\Bigl(\frac{a\pi}{k+1}\, b\Bigr) \\
    &=  \left[\frac{2\sin\bigl(\frac{a\pi}{k+1}\,b\bigr)\,\cos\bigl(\frac{a\pi}{k+1}\bigr)}{\sin\bigl(\frac{a\pi}{k+1}\, b\bigr)} - P\right] \sqrt{\frac{2}{k+1}} \sin\Bigl(\frac{a\pi}{k+1}\, b\Bigr) \\
    &=  \biggl[2\cos\Bigl(\frac{a\pi}{k+1}\Bigr) - P\biggr] \sqrt{\frac{2}{k+1}} \sin\Bigl(\frac{a\pi}{k+1}\, b\Bigr),
    \end{align*}
as required.

\section{Plots of edit distance functions}

\begin{figure}[ht]
    \begin{subfigure}[t]{0.45\textwidth}
        \begin{tikzpicture}[domain=0:1,xscale=4,yscale=8]

            \useasboundingbox (-0.32,-0.09) rectangle (1.15,0.37);
  
            \def\xa{0-0.02}
            \def\xb{1+0.02}
            \def\ya{-0.01}
            \def\yb{0.25+0.01}
            \def\N{100} 
  
            \draw[xstep=0.25,ystep=0.05,ultra thin, color=gray!20]
                    (\xa,\ya) grid (\xb,\yb);
            \draw[->]
                    (\xa,0) -- (\xb,0) node[right] {$p$};
            \draw[->]
                    (0,\ya) -- (0,\yb) node[above,yshift=5pt] {$\ed_{\hword}(p)$};
  
            \draw[] 
          	 node[below left,xshift=-4pt,yshift=-5pt] at ( 0,   0) {$0$}
          	 node[below,yshift=-5pt] at ( 0.25, 0) {$0.25$}
          	 node[below,yshift=-5pt] at ( 0.5, 0) {$0.50$}
          	 node[below,yshift=-5pt] at ( 0.75, 0) {$0.75$}
          	 node[below,yshift=-5pt] at ( 1,   0) {$1$};
            \draw[] 
              node[left,xshift=-4pt] at ( 0,  0.25) {$0.25$}
          	 node[left,xshift=-4pt] at ( 0,  0.2) {$0.2$}
          	 node[left,xshift=-4pt] at ( 0,  0.15) {$0.15$}
          	 node[left,xshift=-4pt] at ( 0,  0.1) {$0.1$}
          	 node[left,xshift=-4pt] at ( 0,  0.05) {$0.05$};
    
            \draw[plt,samples=\N,domain=0:0.75] 
                plot(\x,{\x/3}) 
                node[above left] at (0.49,0.14) {$p/3$};
            \draw[plt,samples=\N,domain=0.75:1] 
                plot(\x,{1-\x})
                node[above right] at (0.83,0.14) {$1-p$};
        \end{tikzpicture}
    \end{subfigure}\ \hfill\
    \begin{subfigure}[t]{0.45\textwidth}
        \begin{tikzpicture}[domain=0:1,scale=4]

            \useasboundingbox (-0.32,-0.18) rectangle (1.15,0.75);
  
            \def\xa{0-0.02}
            \def\xb{1+0.02}
            \def\ya{-0.02}
            \def\yb{0.5+0.02}
            \def\N{100} 
  
            \draw[xstep=0.25,ystep=0.125,ultra thin, color=gray!20]
                (\xa,\ya) grid (\xb,\yb);
            \draw[->]
                (\xa,0) -- (\xb,0) node[right] {$p$};
            \draw[->]
                (0,\ya) -- (0,\yb) node[above,yshift=5pt] {$\ed_{\hkword{k}}(p)$};
  
            \draw[] 
          	 node[below left,xshift=-4pt,yshift=-5pt] at ( 0,   0) {$0$}
          	 node[below,yshift=-5pt] at ( 0.25, 0) {$0.25$}
          	 node[below,yshift=-5pt] at ( 0.5, 0) {$0.50$}
          	 node[below,yshift=-5pt] at ( 0.75, 0) {$0.75$}
          	 node[below,yshift=-5pt] at ( 1,   0) {$1$};
            \draw[] 
          	 node[left,xshift=-4pt] at ( 0,  0.5) {$0.5$}
          	 node[left,xshift=-4pt] at ( 0,  0.375) {$0.375$}
          	 node[left,xshift=-4pt] at ( 0,  0.25) {$0.25$}
          	 node[left,xshift=-4pt] at ( 0,  0.125) {$0.125$};
    
            \draw[plt,samples=\N,domain=0:0.5] 
                plot(\x,{\x}) 
                node[above left] at (0.30,0.26) {$p$};
            \draw[plt,samples=\N,domain=0.5:1] 
                plot(\x,{1-\x})
                node[above right] at (0.70,0.26) {$1-p$};
        \end{tikzpicture}
    \end{subfigure}
    \caption{Plots of $\ed_{\hword}(p)$ and $\ed_{\hkword{k}}(p)$ for $k\geq 2$. Note the different scale on the $y$-axis.}
\end{figure}

\begin{figure}[ht]
    \begin{subfigure}[t]{0.45\textwidth}
        \begin{tikzpicture}[domain=0:1,xscale=4,yscale=8]

            \useasboundingbox (-0.32,-0.10) rectangle (1.15,0.40);
  
            \def\xa{0-0.02}
            \def\xb{1+0.02}
            \def\ya{-0.01}
            \def\yb{5/18+0.01}
            \def\N{200} 
  
            \draw[xstep=1/6,ystep=1/18,ultra thin, color=gray!20]
                (\xa,\ya) grid (\xb,\yb);
            \draw[->]
                (\xa,0) -- (\xb,0) node[right] {$p$};
            \draw[->]
                (0,\ya) -- (0,\yb) node[above,yshift=5pt] {$\ed_{\hcomp}(p)$};
  
            \draw[] 
          	 node[below left,xshift=-4pt,yshift=-5pt] at ( 0,   0) {$0$}
          	 node[below,yshift=-5pt] at ( 1/6, 0) {$1/6$}
          	 node[below,yshift=-5pt] at ( 1/3, 0) {$1/3$}
          	 node[below,yshift=-5pt] at ( 1/2, 0) {$1/2$}
              node[below,yshift=-5pt] at ( 2/3, 0) {$2/3$}
          	 node[below,yshift=-5pt] at ( 5/6, 0) {$5/6$}
          	 node[below,yshift=-5pt] at ( 1,   0) {$1$};
            \draw[] 
          	 node[left,xshift=-4pt] at ( 0,  5/18) {$5/18$}
          	 node[left,xshift=-4pt] at ( 0,  4/18) {$2/9$}
          	 node[left,xshift=-4pt] at ( 0,  3/18) {$1/6$}
          	 node[left,xshift=-4pt] at ( 0,  2/18) {$1/9$}
          	 node[left,xshift=-4pt] at ( 0,  1/18) {$1/18$};
    
            \draw[plt,samples=\N,domain=0:0.5] 
                plot(\x,{\x/2}) 
                node[above left] at (0.26,0.10) {$p/2$};
            \draw[plt,samples=\N,domain=0.5:0.618] 
                plot(\x,{\x/(1+2*\x)}); 
            \draw[plt,samples=\N/2,domain=0.618:0.666] 
                plot(\x,{(-1+3*\x-\x^2)/(-2+6*\x)}); 
            \draw[plt,samples=\N/2,domain=0.667:0.692] 
              plot(\x,{(-2+4*\x-\x^3)/(-4+6*\x+3*\x^2)}); 
            \draw[plt,samples=\N,domain=0.692:0.707] 
                plot(\x,{(-1-2*\x+9*\x^2-5*\x^3)/(-2-4*\x+12*\x^2)});
            \draw[plt,samples=\N,domain=0.707:0.717] 
                plot(\x,{(-2+13*\x-20*\x^2+6*\x^3+2*\x^4)/(-3+18*\x-18*\x^2-4*\x^3)});
            \draw[plt,samples=\N,domain=0.717:0.724] 
                plot(\x,{(-4+15*\x-9*\x^2-14*\x^3+11*\x^4)/(-6+18*\x-20*\x^3)});
            \draw[plt,samples=\N,domain=0.724:0.729] 
                plot(\x,{(-2-3*\x+36*\x^2-55*\x^3+20*\x^4+3*\x^5)/(-3-6*\x+45*\x^2-40*\x^3-5*\x^4)});
            \draw[plt,samples=\N,domain=0.729:0.732] 
                plot(\x,{(-3+29*\x-75*\x^2+59*\x^3+10*\x^4-19*\x^5)/(-4+36*\x-72*\x^2+20*\x^3+30*\x^4)});
            \draw[plt,samples=\N,domain=0.732:0.75] 
                plot(\x,{((0.25-0.262)/(0.75-0.729))*(\x-0.75)+0.25});
            \draw[plt,samples=\N,domain=0.75:1] 
                plot(\x,{1-\x})
                node[above right] at (0.87,0.10) {$1-p$};
        \end{tikzpicture}
    \end{subfigure}\ \hfill\
    \begin{subfigure}[t]{0.45\textwidth}
        \begin{tikzpicture}[domain=0.5:0.75,xscale=15,yscale=70]

            \useasboundingbox (0.41,0.238) rectangle (0.79,0.295);

            \def\xa{0.5-0.005}
            \def\xb{0.75+0.005}
            \def\ya{0.25-0.002}
            \def\yb{5/18+0.002}
            \def\N{100} 
  
            \draw[xstep=0.05,ystep=1/72,ultra thin, color=gray!20]
                (\xa,\ya) grid (\xb,\yb);
            \draw[->]
                (\xa,0.25) -- (\xb,0.25) node[right] {$p$};
            \draw[->]
                (0.5,\ya) -- (0.5,\yb) node[above,yshift=5pt] {$\ed_{\hcomp}(p)$};
  
            \draw[] 
          	 node[below,yshift=-5pt] at ( 0.5, 0.25) {$0.5$}
          	 node[below,yshift=-5pt] at ( 0.55, 0.25) {$0.55$}
          	 node[below,yshift=-5pt] at ( 0.6, 0.25) {$0.6$}
          	 node[below,yshift=-5pt] at ( 0.65, 0.25) {$0.65$}
          	 node[below,yshift=-5pt] at ( 0.7, 0.25) {$0.7$}
          	 node[below,yshift=-5pt] at ( 0.75,   0.25) {$0.75$};
            \draw[] 
          	 node[left,xshift=-4pt] at ( 0.5,  5/18) {$5/18$}
          	 node[left,xshift=-4pt] at ( 0.5,  19/72) {$19/72$}
          	 node[left,xshift=-4pt] at ( 0.5,  1/4) {$1/4$};
    
            \foreach \k in {3,...,100}{
                \pgfmathsetmacro{\csk}{cos(360/(\k+1))}
                \pgfmathsetmacro{\xcor}{1-(1/(2*\csk+2))}
                \pgfmathsetmacro{\ycor}{1-\xcor+(4*\xcor-3)/(\k+1)}
                \draw node[smvtx] at (\xcor,\ycor) {};
            }
        \end{tikzpicture}
    \end{subfigure}
    \caption{Plot of $\ed_{\hcomp}(p)$ and a plot of $\Bigl\{\bigl(p_k,\ed_{\hcomp}(p)\bigr)\Bigr\}_{k\geq 2}$.}
\end{figure}

\begin{thebibliography}{99}
    \bibitem{AFKSz} Noga Alon, Eldar Fischer, Michael Krivelevich, and M\'ari\'o Szegedy, Efficient testing of large graphs. \textit{Combinatorica} \textbf{20} (2000), no. 4, 451--476.
    \bibitem{AS} Noga Alon and Uri Stav, What is the furthest graph from a hereditary property?. \textit{Random Structures Algorithms} \textbf{33} (2008), no. 1, 87--104.
    \bibitem{AKM} Maria Axenovich, Andr\'e K\'ezdy, and Ryan Martin, On the editing distance of graphs. \textit{J. Graph Theory} \textbf{58} (2008), no. 2, 123--138.
    \bibitem{AM} Maria Axenovich and Ryan R. Martin, Multicolor and directed edit distance. \texttt{https://arxiv.org/abs/1106.2870}, accessed 20 Apr 2026. 
    \bibitem{BM} J\'ozsef Balogh and Ryan Martin, Edit distance and its computation. \textit{Electron. J. Combin.} \textbf{15} (2008), no. 1, Research Paper 20, 27 pp.
    \bibitem{CMM} Christopher Cox, Ryan R. Martin, and Daniel McGinnis, Accumulation points of the edit distance function. \textit{Discrete Math.} \textbf{345} (2022), no. 7, Paper No. 112857, 17pp.
    \bibitem{ER} Paul Erd\H{o}s and Alfr\'{e}d R\'{e}nyi, On random graphs. I. \textit{Publ. Math. Debrecen} \textbf{6} (1959), 290--297.
    \bibitem{Gal} Tibor Gallai, Transitiv orientierbare Graphen. \textit{Acta Math. Acad. Sci. Hungar.} \textbf{18} (1967), 25--66.
    \bibitem{Gil} Edgar N. Gilbert, Random graphs. \textit{Ann. Math. Statist.} \textbf{30} (1959), 1141--1144.
    \bibitem{GKP} Marc Glen, Sergey Kitaev, and Artem Pyatkin, On the representation number of a crown graph. \textit{Discrete Appl. Math.} \textbf{244} (2018), 89--93.    
 \bibitem{HKP} Magn\'us M. Halld\'orsson, Sergey Kitaev, and Artem Pyatkin, Semi-transitive orientations and word-representable graphs. \textit{Discrete Appl. Math.} \textbf{201} (2016), 164–171.    
    \bibitem{HHMO} Zion Hefty, Paul Horn, Colby Muir, and Andrew Owens, Word-representation numbers of graphs: Bottlenecks and bounds, J. Combin. Th. Ser. A, \textbf{223} (2026), Paper No. 106215, 25pp.
    \bibitem{Kit} Sergey Kitaev, A comprehensive introduction to the theory of word-representable graphs. \textit{Developments in Language Theory}, 36--67. Lecture Notes in Comput. Sci., \textbf{10396} Springer, Cham, 2017.
    \bibitem{KP} Sergey Kitaev and Artem Pyatkin, On representable graphs. \textit{J. Autom. Lang. Comb.} \textbf{13} (2008), no. 1, 45--54.
    \bibitem{KS08} Sergey Kitaev and Steven Seif, Word problem of the Perkins semigroup
via directed acyclic graphs. {\em Order} {\bf 25} (2008) 3, 177--194.
\bibitem{kitaev2015}
Sergey Kitaev and Vadim Lozin, \emph{Words and Graphs},
Springer Monographs in Mathematics, Springer, Cham, 2015.
    \bibitem{MT} Edward Marchant and Andrew Thomason, Extremal graphs and multigraphs with two weighted colours. \textit{Fete of combinatorics and computer science}, 239–286. Bolyai Soc. Math. Stud., \textbf{20} J\'anos Bolyai Mathematical Society, Budapest, 2010.
    \bibitem{Mar} Ryan Martin, The edit distance function and symmetrization, \textit{Electron. J. Combin.} \textbf{20} (2013), no. 3, Paper 26, 25pp. 
    \bibitem{MR} Ryan R. Martin and Alex W.N. Riasanovsky, On the edit distance function of the random graph. \textit{Combin. Probab. Comput.} \textbf{31} (2022), no. 2, 345--367.
    \bibitem{M} Ryan R. Martin, The edit distance in graphs: methods, results, and generalizations. \textit{Recent trends in combinatorics}, 31–62. \textit{IMA Vol. Math. Appl.}, \textbf{159} Springer, [Cham], 2016.
    \bibitem{MathWorldT} Weisstein, Eric W. ``Chebyshev Polynomial of the First Kind.'' From MathWorld--A Wolfram Resource. \texttt{https://mathworld.wolfram.com/ChebyshevPolynomialoftheFirstKind.html}, accessed 14 Apr 2026. 
    \bibitem{MathWorldU} Weisstein, Eric W. ``Chebyshev Polynomial of the Second Kind.'' From MathWorld--A Wolfram Resource. \texttt{https://mathworld.wolfram.com/ChebyshevPolynomialoftheSecondKind.html}, accessed 14 Apr 2026.
\end{thebibliography}
\end{document}